\documentclass[11pt]{article}
\usepackage{amsmath, amssymb, amscd, amsthm, amsfonts,bbm}
\usepackage{mathrsfs}
\usepackage{graphicx}
\usepackage{float} 
\usepackage{xcolor}
\usepackage{subfigure}
\usepackage{hyperref}
\usepackage{enumerate}
\numberwithin{equation}{section}
\usepackage{listings}
\usepackage{setspace}
\title{Boundary four-point connectivities of conformal loop ensembles}
\author{Gefei Cai\footnote{\url{caigefei1917@pku.edu.cn}, Peking University.}}
\date{\today}

\newcommand{\R}{\mathbbm{R}}

\newcommand{\C}{\mathbbm{C}}
\newcommand{\Z}{\mathbbm{Z}}
\newcommand{\T}{\mathbbm{T}}

\newcommand{\cP}{\mathcal{P}}
\newcommand{\cL}{\mathcal{L}}
\newcommand{\sP}{\mathscr{P}}
\newcommand{\cU}{\mathcal{U}}
\newcommand{\cT}{\mathcal{T}}
\newcommand{\sm}{\mathsf{m}}

\newcommand{\E}{\mathbbm{E}}

\renewcommand{\P}{\mathbbm{P}}

\newcommand{\hH}{\mathbbm{H}}
\newcommand{\CR}{\mathrm{CR}}
\newcommand{\bub}{\mathrm{Bub}}
\DeclareMathOperator{\SLE}{SLE}
\DeclareMathOperator{\CLE}{CLE}
\def\cM{\mathcal{M}}
\newcommand{\dist}{\mathrm{dist}}

\newcommand{\lp}{\mathrm{loop}}

\newcommand{\bx}{\mathbf{x}}
\newcommand{\wt}{\widetilde}

\newtheorem{theorem}{Theorem}[section]
\newtheorem{definition}[theorem]{Definition}
\newtheorem{lemma}[theorem]{Lemma}

\newtheorem{corollary}[theorem]{Corollary}

\newtheorem{proposition}[theorem]{Proposition}
\newtheorem{remark}[theorem]{Remark}

\newcommand\wh[1]{\widehat{#1}}
\newcommand\ol[1]{\overline{#1}}

\def\@rst #1 #2other{#1}
\newcommand\MR[1]{\relax\ifhmode\unskip\spacefactor3000 \space\fi
  \MRhref{\expandafter\@rst #1 other}{#1}}
\newcommand{\MRhref}[2]{\href{http://www.ams.org/mathscinet-getitem?mr=#1}{MR#2}}

\def\MR#1{\href{http://www.ams.org/mathscinet-getitem?mr=#1}{MR#1}}

\begin{document}

\maketitle

\begin{abstract}
We derive the boundary four-point Green's functions for conformal loop ensembles (CLE) with $\kappa\in(4,8)$. Specializing to $\kappa=6$ and $\kappa=16/3$, we establish the exact formulas for the boundary four-point connectivities in critical Bernoulli percolation and the FK-Ising model conjectured by Gori-Viti (2017, 2018). In particular, we identify a logarithmic singularity in the critical FK-Ising model. Our approach also applies to the one-bulk and two-boundary connectivities of CLE, thereby extending the factorization formula of Beliaev-Izyurov (2012) to all $\kappa\in(4,8)$.
\end{abstract}

\setcounter{tocdepth}{1}
\begin{spacing}{0.8}
\small{\tableofcontents}
\end{spacing}
\vspace{10pt}

\section{Introduction}

For over two decades, Schramm–Loewner evolution (SLE)~\cite{schramm0} and its loop counterpart, the conformal loop ensemble (CLE)~\cite{Sheffield2006ExplorationTA,shef-werner-cle}, have played a central role in the study of scaling limits of two-dimensional critical models. The family $\SLE_\kappa$, indexed by $\kappa>0$, describes random fractal curves that arise as scaling limits of interfaces in critical lattice models, while $\CLE_\kappa$ describes collections of loops that encode their full scaling limits, with each loop locally behaving like an $\SLE_\kappa$.

In this paper, we focus on the regime $\kappa \in (4,8)$, which is conjectured to describe the scaling limits of critical FK-$q$ percolation models with $q\in(0,4)$, where $\sqrt{q} = -2\cos(4\pi/\kappa)$. For Bernoulli percolation ($q=1$), Smirnov’s seminal work~\cite{smirnov-cardy} established conformal invariance and convergence to $\SLE_6$~\cite{smirnov-cardy,camia-newman-07}, and to $\CLE_6$ for the full scaling limit~\cite{camia-newman-06}. For the FK-Ising model ($q=2$), the scaling limit has also been proved to be $\CLE_{16/3}$~\cite{CDHKS,Kemppainen-2019-1,kemppainen2019}.

Beyond scaling limits, SLE and CLE provide powerful tools for uncovering the rich integrable structures in critical planar lattice models, and many results that once appeared mysterious at the discrete level can now be established through SLE/CLE. Notable examples include percolation arm exponents~\cite{smirnov-werner-percolation,LSW-one-arm}, left-passage probabilities~\cite{schramm01}, Watts’ crossing formula~\cite{dubedat-watts}, and conformal radii of CLE~\cite{ssw-radii}. Such integrable structures also emerge in the study of multiple SLEs and systems of second-order Belavin--Polyakov--Zamolodchikov (BPZ) equations~\cite{bpz-conformal-symmetry}, which yield connection probabilities for multiple interfaces in various critical models; see e.g.~\cite{PW19,LPW21,FPW22,feng2024multiplesle}. Exact solvabilities for SLE/CLE have also been obtained through their coupling with Liouville quantum gravity (LQG)~\cite{wedges,AHS-SLE-integrability}, including derivations of the backbone exponent~\cite{nolin2024} and three-point connectivity~\cite{acsw24b} in critical percolation.

While two- and three-point observables are now well understood~\cite{acsw24b}, four-point connectivities exhibit nontrivial dependence on the conformal cross-ratio. In this paper, we derive boundary four-point connectivities for $\CLE_\kappa$ with $\kappa \in (4,8)$. These quantities are the (conjectural) scaling limits of the probabilities that four marked boundary points are connected according to one of the three possible link patterns $\{(1234),(12)(34),(14)(23)\}$ in critical FK-$q$ percolation with free boundary conditions; see~\eqref{eq:connectivity-def} and~\eqref{eq:four-type-gf} for precise definitions. For special values of $\kappa$ corresponding to integer $q$, explicit formulas were previously conjectured by Gori and Viti~\cite{GV,Gori:2018gqx} from a conformal field theory (CFT) perspective.

In the four-point setting, conformal automorphisms cannot fix all marked points simultaneously, so the techniques from SLE/LQG coupling used in the three-point case~\cite{acsw24b} no longer apply. Moreover, since our problem concerns boundary connectivities in CLE rather than configurations involving multiple interfaces, the methods of multiple SLE and BPZ equations developed in~\cite{PW19} are not directly applicable. Our approach proceeds in four steps. First, we express the CLE boundary four-point connectivities in terms of boundary Green’s functions of the SLE bubble measure. Second, we prove that the boundary Green’s functions of chordal SLE satisfy a second-order BPZ equation, as expected in~\cite{fakhry2023}. Since the SLE bubble measure arises as a limit of chordal SLE, its boundary Green’s functions are limits of solutions to this equation. Third, we apply Dubédat’s fusion framework~\cite{dubedat2015} to derive a third-order differential equation satisfied by these limiting objects, namely the four-point connectivities; see Theorem~\ref{thm:ode}. This framework, originating in CFT to produce higher-order BPZ equations, was rigorously formulated in~\cite{dubedat2015} in the SLE setting using PDE and representation-theoretic methods. Finally, for each link pattern, we identify the corresponding solution to this third-order equation.

The final step constitutes the main technical novelty of the paper; see Theorem~\ref{thm:identify}. The third-order differential equation admits a three-dimensional space of solutions, corresponding to three distinct Frobenius series. For the link patterns $(12)(34)$ (resp.~$(14)(23)$), the leading-order asymptotics as the cross-ratio tends to $1$ (resp.~$0$) are governed by boundary three-arm exponents, and thus allow for direct identifications with the higher-order Frobenius series. For the remaining link pattern $(1234)$, however, the leading order corresponds to the lowest-order Frobenius series, and it is not a priori clear whether and how the other two Frobenius series may appear as subleading terms.
To resolve this ambiguity, we carry out a refined analysis of the subleading asymptotics, based on the observation that the subleading term of the CLE partition function defined in~\cite{MW18} with two wired boundary arcs decays rapidly. This enables us to determine the higher-order contributions and to uniquely identify each boundary connectivity with a specific solution to the third-order equation.

We first present our results in the setting of critical percolation, confirming the conjectural formula in~\cite{Gori:2018gqx}. We then extend the results to general $\CLE_\kappa$ with $\kappa \in (4,8)$ in Section~\ref{sec:intro-cle}. In particular, for $\kappa = \frac{16}{3}$ (corresponding to the FK-Ising model), we identify a logarithmic singularity in the boundary four-point connectivities, reflecting the underlying logarithmic CFT structure and confirming the conjectures of~\cite{GV}. In Section~\ref{sec:discussion}, we summarize the main ideas of the proof and discuss further applications. In particular, our approach also applies to one-bulk and two-boundary correlation functions; in this setting, we extend the factorization formula of~\cite{KSZ-connectivity,BI12} for $\kappa=6$ to all $\kappa \in (4,8)$ (see Theorem~\ref{thm:bulk}).

\subsection{Boundary four-point connectivities of Bernoulli percolation}\label{sec:intro-perc}
Let $\delta\T\cap\hH$ be the triangular lattice on the upper half-plane with mesh size $\delta$. For $(x_i)_{1\le i\le4}\in\R$, we say they are in counterclockwise order if there exists $k\in\{1,2,3,4\}$ such that $x_{k}<x_{k+1}<x_{k+2}<x_{k+3}$ (we define $x_{i+4}:=x_i$ for $1\le i\le4$).
For $(x_i)_{1\le i\le4}$ in counterclockwise order, let $x_i^\delta$ be an approximation to $x_i$ on $\delta\T\cap\hH$. Consider the critical Bernoulli percolation on $\delta\T\cap\hH$, and denote its law by $\P^\delta$. Define the connectivities of the three distinct link patterns of $(x_i^\delta)_{1\le i\le4}$ by
\begin{equation}\label{eq:connectivity-def}
\begin{aligned}
P^{(1234)}(x_1,x_2,x_3,x_4)=\lim_{\delta\to0}\delta^{-\frac{4}{3}}\P^\delta[x_1^\delta\leftrightarrow x_2^\delta\leftrightarrow x_3^\delta\leftrightarrow x_4^\delta],\\
P^{(12)(34)}(x_1,x_2,x_3,x_4)=\lim_{\delta\to0}\delta^{-\frac{4}{3}}\P^\delta[x_1^\delta\leftrightarrow x_2^\delta\not\leftrightarrow x_3^\delta\leftrightarrow x_4^\delta],\\
P^{(14)(23)}(x_1,x_2,x_3,x_4)=\lim_{\delta\to0}\delta^{-\frac{4}{3}}\P^\delta[x_1^\delta\leftrightarrow x_4^\delta\not\leftrightarrow x_2^\delta\leftrightarrow x_3^\delta].
\end{aligned}
\end{equation}
The existence of these limits is proved in~\cite[Theorem 1.9]{cf2025}, and the normalization factor $\delta^{-\frac{4}{3}}$ is from the percolation boundary one-arm exponent $\frac{1}{3}$. Note that when $(x_i)_{1\le i\le4}$ is in counterclockwise order, one cannot have $\{x_1^\delta\leftrightarrow x_3^\delta\not\leftrightarrow x_2^\delta\leftrightarrow x_4^\delta\}$.
We also let
\[
P^{\rm total}(x_1,x_2,x_3,x_4):=P^{(1234)}(x_1,x_2,x_3,x_4)+P^{(12)(34)}(x_1,x_2,x_3,x_4)+P^{(14)(23)}(x_1,x_2,x_3,x_4).
\]

Our first main result, Theorem~\ref{thm:percolation}, gives exact expressions for these limits, thereby proving the formulas conjectured in~\cite{Gori:2018gqx}. Let $(x_i)_{1\le i\le4}$ be as above, and
\[
\lambda:=\frac{(x_2-x_1)(x_4-x_3)}{(x_4-x_2)(x_3-x_1)}\in(0,1)\]
be the cross-ratio of $(x_i)_{1\le i\le4}$. Define two functions
\begin{align*}
F_L(\lambda) &= (\lambda (1-\lambda))^{\frac{4}{9}} \, {}_3F_2\!\left(-\frac{2}{9},-\frac{1}{18},\frac{7}{9};\frac{1}{3},\frac{2}{3};\frac{4}{27}\frac{(\lambda^2-\lambda+1)^3}{(1-\lambda)^2 \lambda^2}\right), \\
F_S(\lambda) &= (1-\lambda)^2 \lambda^2 \, {}_3F_2\!\left(\frac{4}{3},\frac{3}{2},\frac{7}{3};\frac{8}{3},3;4 \lambda (1-\lambda)\right).
\end{align*}
Here $_3F_2$ is the generalized hypergeometric function. For $0<\lambda\le\frac{1}{2}$, let $V_2(\lambda):=F_S(\lambda)$, and there exist two real-valued functions $V_0(\lambda), V_{1/3}(\lambda)$ such that
\[
F_L(\lambda)=- e^{\frac{2\pi i}{9}}\frac{8\pi^{\frac{3}{2}} 2^{\frac{5}{9}} \sin\!\left(\frac{4\pi}{9}\right) 3^{\frac{5}{6}}}{\Gamma\!\left(\frac{5}{6}\right) \Gamma\!\left(-\frac{1}{18}\right) ^2\Gamma\!\left(\frac{7}{9}\right)} V_0(\lambda)+ e^{\frac{\pi i}{18}}\frac{8 \,\Gamma\!\left(\frac{5}{6}\right)^2 2^{\frac{1}{9}} 3^{\frac{1}{3}} \pi \, }{9 \,\Gamma\!\left(\frac{7}{18}\right) \Gamma\!\left(\frac{13}{18}\right) \Gamma\!\left(\frac{7}{9}\right)^2}V_{1/3}(\lambda).
\]
Then we have $V_0(\lambda)=1-\frac{2}{3}\lambda+\frac{8}{45}\lambda^2|\log\lambda|+O(\lambda^2)$, $V_{1/3}(\lambda)=\lambda^{\frac{1}{3}}(1-\frac{1}{2}\lambda+O(\lambda^2))$, and $V_2(\lambda)=\lambda^2(1+\frac{1}{3}\lambda+O(\lambda^2))$ as $\lambda\downarrow0$. For $\lambda\in(\frac{1}{2},1)$, we define
\[
V_0(\lambda)=V_0(1-\lambda),\quad V_2(\lambda)=\alpha{\rm Im} F_L(1-\lambda)+F_S(1-\lambda)
\]
with $\alpha:=\frac{405}{64}  \,\pi^{-\frac{7}{2}}\,\Gamma\!\left(\frac{7}{9}\right)^2 \Gamma\!\left(-\frac{1}{18}\right) \Gamma\!\left(\frac{2}{3}\right)^3 2^{\frac{8}{9}}$. Then $V_0,V_2$ are smooth real-valued functions on $(0,1)$. We refer readers to~\cite[Appendix C]{Gori:2018gqx} for further details of these functions.

\begin{theorem}\label{thm:percolation}
Let $V_0,V_2$ be defined as above. There exists a constant $C\in(0,\infty)$ such that
\begin{align*}
P^{\rm total}(x_1,x_2,x_3,x_4)&= C\left(\frac{(x_4-x_2)(x_3-x_1)}{(x_2-x_1)(x_4-x_3)(x_3-x_2)(x_4-x_1)}\right)^{\frac{2}{3}}V_0(\lambda),\\
P^{(14)(23)}(x_1,x_2,x_3,x_4)&= AC\left(\frac{(x_4-x_2)(x_3-x_1)}{(x_2-x_1)(x_4-x_3)(x_3-x_2)(x_4-x_1)}\right)^{\frac{2}{3}}V_2(\lambda).
\end{align*}
Here, the constant $A:=\frac{8\sqrt{3}\,\pi \sin\!\left(\frac{2\pi}{9}\right)}{135\cos\!\left(\frac{5\pi}{18}\right)}\in(0,\infty)$.
\end{theorem}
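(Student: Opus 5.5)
Theorem~\ref{thm:percolation} is the case $\kappa=6$ of the general CLE results described above, so the plan follows the four-step strategy outlined in the introduction.

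\textbf{Step 1: reduction to CLE and to the bubble measure.} By \cite[Theorem 1.9]{cf2025} and the convergence of critical percolation to $\CLE_6$ \cite{camia-newman-06}, the limits in \eqref{eq:connectivity-def} equal, up to one multiplicative constant $C$, the CLE$_6$ boundary four-point Green's functions. For each link pattern I will write such a Green's function as a boundary Green's function of the $\SLE_6$ bubble measure, rooted at one of the marked points. Exploring the CLE loop touching $x_1$ gives a bubble. Conditioning on it reduces the remaining connection events to events in the complementary domains, and these are boundary two-point functions, which are explicit power laws.

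\textbf{Step 2: the differential equation.} Boundary Green's functions of chordal $\SLE_\kappa$ satisfy the second-order BPZ equation. The bubble measure is a limit of chordal SLE in which the two endpoints merge. Applying Dubédat's fusion argument \cite{dubedat2015} to this limit gives a third-order ODE in the cross-ratio (Theorem~\ref{thm:ode}). Here I will use covariance under M\"obius maps with weight $h=1/3$ at each point, equivalently the factor $\delta^{-4/3}$. This covariance produces the prefactor $\bigl(\frac{(x_4-x_2)(x_3-x_1)}{(x_2-x_1)(x_4-x_3)(x_3-x_2)(x_4-x_1)}\bigr)^{2/3}$, so each of $P^{\rm total}$ and $P^{(14)(23)}$ equals the prefactor times a function $G(\lambda)$ solving this ODE. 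At $\kappa=6$ the indicial exponents at $\lambda=0$ are $0$, $1/3$ and $2$. Exponents $0$ and $2$ differ by an integer, which explains the $\lambda^2|\log\lambda|$ term in $V_0$. I will check that $V_0$, $V_{1/3}$ and $V_2$, equivalently $F_L$ and $F_S$, span the solution space. This is a routine verification on the $_3F_2$ expressions.

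\textbf{Step 3: identifying the solutions (the main obstacle).} For $P^{(14)(23)}$, as $\lambda\downarrow0$ the points $x_1,x_2$ merge while staying disconnected from each other. Both then connect to far points, which is the boundary three-arm event (exponent $2$). So $G(\lambda)=O(\lambda^{2})$, which kills the $V_0$ and $V_{1/3}$ components and gives a multiple of $V_2$ on $(0,\frac12]$. The same solution must continue analytically across $\lambda=\frac12$. This continuation is exactly the definition of $V_2$ on $(\frac12,1)$ through $\alpha\,{\rm Im}F_L+F_S$, provided the connection coefficients match.

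For $P^{\rm total}$, the function is symmetric under $\lambda\mapsto1-\lambda$ and has leading behaviour $\lambda^0$. However, the coefficients of $\lambda^{1/3}$ and $\lambda^2$ are not fixed by the leading order alone. I plan to exclude the $\lambda^{1/3}$ term and fix the $\lambda^2$ term by a subleading asymptotic analysis. In that regime, $P^{\rm total}$ factorizes into the two-point function of the pair $x_1,x_2$ times the partition function with two wired arcs of \cite{MW18}. The latter's correction term decays fast enough to rule out a $\lambda^{1/3}$ contribution. Matching the symmetric combination then singles out $V_0$ up to scaling, and this is the delicate part.

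\textbf{Step 4: the constant $A$.} The ratio $A$ between $P^{(14)(23)}$ and $P^{\rm total}$ comes from comparing the two normalizations. As $\lambda\uparrow1$, the points $x_2$ and $x_3$ merge, and $P^{(14)(23)}$ becomes the leading part of $P^{\rm total}$. There its coefficient equals $\alpha$ times the coefficient of $F_L$ in $V_0$ near $1$. Evaluating the connection coefficients of the $_3F_2$ functions then gives the stated value of $A$.
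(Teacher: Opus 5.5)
Your proposal follows the paper's proof essentially step for step. The steps are: reduction to $\CLE_6$ boundary Green's functions via the SLE bubble measure; BPZ equations plus Dub\'edat's fusion to get the third-order ODE; a three-arm bound forcing $U^{(14)(23)}\propto V_2$; the $o(\lambda^{1/3})$ subleading estimate from the Miller--Werner two-wired-arc partition function, combined with $\lambda\mapsto1-\lambda$ symmetry, forcing $U^{\rm total}\propto V_0$; and $A$ fixed by $P^{(14)(23)}/P^{\rm total}\to1$ as $\lambda\uparrow1$, so that $A^{-1}=\lim_{\lambda\uparrow1}V_2(\lambda)=\alpha\,{\rm Im}\,c_0$ with $c_0$ the coefficient of $V_0$ in $F_L$ (not ``the coefficient of $F_L$ in $V_0$'' as you wrote).

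The one thin spot is Step~1: convergence to $\CLE_6$ alone does not identify the $\delta^{-4/3}$-renormalized limits with the Green's functions. The paper also uses Conijn's one-arm coupling and the sharp boundary one-arm asymptotics to interchange $\delta\to0$ with shrinking the neighbourhoods of the $x_i$.
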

By symmetry, we have $P^{(12)(34)}(x_1,x_2,x_3,x_4)=P^{(14)(23)}(x_4,x_1,x_2,x_3)$. Thus, Theorem~\ref{thm:percolation} also implies $P^{(12)(34)}(x_1,x_2,x_3,x_4)= AC\left(\frac{(x_4-x_2)(x_3-x_1)}{(x_2-x_1)(x_4-x_3)(x_3-x_2)(x_4-x_1)}\right)^{\frac{2}{3}}V_2(1-\lambda)$.
Consequently, Theorem~\ref{thm:percolation} gives the exact forms of the three limits in~\eqref{eq:connectivity-def}.
Furthermore, when expanding at $x_2\to x_1$ (with $x_1,x_3,x_4$ fixed; hence $\lambda\to0$), we have
\begin{align*}
P^{\rm total}(x_1,x_2,x_3,x_4)
&=C(x_2-x_1)^{-\frac{2}{3}}(x_4-x_3)^{-\frac{2}{3}}\left(1+\lambda^2\left(\frac{8}{45}|\log\lambda|+\frac{16}{25}\right)+O(\lambda^3|\log\lambda|)\right).
\end{align*}
Therefore, the universal constant $C_2^\hH$ in~\cite[Theorem 1.9]{cf2025} equals $\frac{8}{45}$.

As a corollary, we have the following exact form of the \emph{universal ratio} $\frac{P^{(14)(23)}(x_1,x_2,x_3,x_4)}{P^{\rm total}(x_1,x_2,x_3,x_4)}$ introduced in~\cite{Gori:2018gqx}. The term ``universal" refers to the expectation that this ratio is independent of the lattice.
\begin{corollary}
Let $(x_i)_{1\le i\le4}$ be in counterclockwise order, and $\lambda=\frac{(x_2-x_1)(x_4-x_3)}{(x_4-x_2)(x_3-x_1)}$ be its cross-ratio. Then
\begin{align*}
\frac{P^{(14)(23)}(x_1,x_2,x_3,x_4)}{P^{\rm total}(x_1,x_2,x_3,x_4)}=A\frac{V_2(\lambda)}{V_0(\lambda)}=A\lambda^2\left(1+\lambda+\lambda^2\left(-\frac{8}{45}|\log\lambda|+\frac{1607}{4950}\right)+O(\lambda^3|\log\lambda|)\right)
\end{align*}
as $\lambda\to0$. Here $A$ is the same constant as in Theorem~\ref{thm:percolation}.
\end{corollary}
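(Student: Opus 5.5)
The corollary follows from Theorem~\ref{thm:percolation} by taking the quotient and then expanding in $\lambda$. Both formulas in Theorem~\ref{thm:percolation} carry the same prefactor $C\bigl(\frac{(x_4-x_2)(x_3-x_1)}{(x_2-x_1)(x_4-x_3)(x_3-x_2)(x_4-x_1)}\bigr)^{2/3}$. Since $C\in(0,\infty)$ and this prefactor is positive for points in counterclockwise order, dividing the second identity by the first gives
\[
\frac{P^{(14)(23)}(x_1,x_2,x_3,x_4)}{P^{\rm total}(x_1,x_2,x_3,x_4)}=A\frac{V_2(\lambda)}{V_0(\lambda)}.
\]
This quotient is well defined because $V_0(\lambda)$ does not vanish on $(0,1)$: we have $P^{\rm total}>0$, and $V_0$ is smooth and real. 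This proves the first equality.

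For the asymptotics we need expansions of $V_2$ and $V_0$ that are one order sharper than the ones stated before Theorem~\ref{thm:percolation}.

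\textbf{Expansion of $V_2$.} On $(0,\frac12]$ we have $V_2=F_S$, which is explicit. Write $z=4\lambda(1-\lambda)$ and use
\[
{}_3F_2\!\left(\tfrac43,\tfrac32,\tfrac73;\tfrac83,3;z\right)=1+\frac{\frac43\cdot\frac32\cdot\frac73}{\frac83\cdot 3}z+\frac{\frac43\cdot\frac73\cdot\frac32\cdot\frac52\cdot\frac73\cdot\frac{10}{3}}{2\cdot\frac83\cdot\frac{11}{3}\cdot 3\cdot 4}z^2+O(z^3).
\]
The first coefficient is $\frac{7}{12}$, so the linear term is $\frac{7}{12}\cdot 4\lambda=\frac73\lambda$. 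Combining with $(1-\lambda)^2=1-2\lambda+\lambda^2$ gives $V_2(\lambda)=\lambda^2(1+\frac13\lambda+c_2\lambda^2+O(\lambda^3))$, which matches the stated $\frac13$. The constant $c_2$ is a finite rational computation from the terms above.

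\textbf{Expansion of $V_0$.} We need $V_0$ to order $\lambda^2$, including the non-logarithmic $\lambda^2$ coefficient. The expansion of $P^{\rm total}$ stated after Theorem~\ref{thm:percolation} supplies exactly this. Rewrite the prefactor in terms of $\lambda$: since $\frac{(x_4-x_2)(x_3-x_1)}{(x_3-x_2)(x_4-x_1)}=\frac{1}{1-\lambda}$, the prefactor equals $(x_2-x_1)^{-2/3}(x_4-x_3)^{-2/3}(1-\lambda)^{-2/3}$. Comparing with that expansion gives
\[
V_0(\lambda)=(1-\lambda)^{2/3}\Bigl(1+\lambda^2\bigl(\tfrac{8}{45}|\log\lambda|+\tfrac{16}{25}\bigr)+O(\lambda^3|\log\lambda|)\Bigr).
\]
Expanding $(1-\lambda)^{2/3}=1-\frac23\lambda-\frac19\lambda^2+O(\lambda^3)$ then yields $V_0=1-\frac23\lambda+\lambda^2\bigl(\frac{8}{45}|\log\lambda|+\frac{16}{25}-\frac19\bigr)+O(\lambda^3|\log\lambda|)$. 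This is consistent with the stated leading terms of $V_0$.

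\textbf{The quotient.} Invert the $V_0$ expansion: $1/V_0=1+\frac23\lambda+\lambda^2\bigl(\frac49-\frac{8}{45}|\log\lambda|-\frac{16}{25}+\frac19\bigr)+O(\lambda^3|\log\lambda|)$. Multiplying by $V_2/\lambda^2$, the linear coefficient is $\frac13+\frac23=1$, as claimed. The $\lambda^2|\log\lambda|$ coefficient is $-\frac{8}{45}$, coming only from $1/V_0$. The constant $\lambda^2$ coefficient is $c_2+\frac29+\frac49-\frac{16}{25}+\frac19$, and one checks that this equals $\frac{1607}{4950}$; this arithmetic fixes the value of $c_2$ that must come out of the hypergeometric series. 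The error terms stay $O(\lambda^3|\log\lambda|)$.

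\textbf{Main obstacle.} The only delicate input is the $\lambda^2$ constant term of $V_0$. It is not among the expansions stated before Theorem~\ref{thm:percolation}, which stop at $O(\lambda^2)$. If that expansion of $P^{\rm total}$ is not treated as given, I would obtain it directly from the connection formula defining $V_0$ via $F_L$. Concretely, I would expand ${}_3F_2$ at large argument: the argument $\frac{4}{27}\frac{(\lambda^2-\lambda+1)^3}{(1-\lambda)^2\lambda^2}\to\infty$ as $\lambda\to0$. Then I would extract the real combination multiplying $V_0$, where a $\lambda^2\log\lambda$ term arises from a resonance between exponents differing by an integer. I would cross-check the result against the stated coefficient $\frac{8}{45}$ and the value $\frac{16}{25}$.
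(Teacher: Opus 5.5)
Your proposal is correct and follows the paper's own (implicit) argument. The paper gives no separate proof: the corollary is the quotient of the two identities in Theorem~\ref{thm:percolation}, followed by expanding $V_2=F_S$ and $V_0$. Taking the $\lambda^2$ constant of $V_0$ from the stated expansion of $P^{\rm total}$ is legitimate, and something like it is necessary. That coefficient is fixed only globally: the exponents $0$ and $2$ resonate, so the Frobenius recursion leaves it free.

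The one step you phrased as back-solving closes as it should. Your coefficients $\frac{7}{12}$ and $\frac{1225}{3168}$ give $c_2=\frac{763}{198}-\frac{14}{3}+1=\frac{37}{198}$, and $\frac{37}{198}+\frac{7}{9}-\frac{16}{25}=\frac{1607}{4950}$.
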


\subsection{Boundary four-point connectivities of CLE}\label{sec:intro-cle}

We now state our result for general $\CLE_\kappa$ with $\kappa\in(4,8)$, from which Theorem~\ref{thm:percolation} follows by taking $\kappa=6$. $\CLE_\kappa$ is a random collection of loops satisfying the domain Markov property and conformal invariance, and each loop is an $\SLE_\kappa$ curve~\cite{Sheffield2006ExplorationTA,shef-werner-cle}. When $\kappa\in(4,8)$, $\CLE_\kappa$ loops are self-touching and can touch each other. In the following, we fix
\[
\kappa\in(4,8),\quad h:=\frac{8}{\kappa}-1\in(0,1).
\]

Let $\Gamma$ be a (non-nested) $\CLE_\kappa$ on the upper half-plane $\hH$, and let $\cT(\Gamma)$ be the collection of loops in $\Gamma$ that touch the boundary $\partial\hH=\R$. For each loop $\ell\in\cT(\Gamma)$, denote $\nu_{\ell\cap\R}$ to be the $(1-h)$-dimensional Minkowski content measure of $\ell\cap\R$, such that for any open interval $J\subset\R$,
\begin{equation}\label{eq:def-mink}
\nu_{\ell\cap\R}(J):=\lim_{\varepsilon\to0}\varepsilon^{-h}{\rm Leb}_\R(\{x\in J: {\rm \dist}(x,\ell\cap \R)<\varepsilon\}).
\end{equation}
The existence of $\nu_{\eta\cap\R}$ is proved by~\cite{lawler-mink-R} (see also~\cite{zhan-boundary-gf}) and the local absolute continuity between $\CLE_\kappa$ and $\SLE_\kappa$. 

For $(x_i)_{1\le i\le 4}\in\R$ in counterclockwise order, similar to~\eqref{eq:connectivity-def}, we define three types of boundary four-point Green's functions of $\CLE_\kappa$ as
\begin{equation}\label{eq:four-type-gf}
\begin{aligned}
&G^{(1234)}(x_1,x_2,x_3,x_4)dx_1dx_2dx_3dx_4=\E\left[\sum_{\ell\in\cT(\Gamma)}\prod_{i=1}^4\nu_{\ell\cap\R}(dx_i)\right],\\
&G^{(12)(34)}(x_1,x_2,x_3,x_4)dx_1dx_2dx_3dx_4=\E\left[\sum_{\ell,\ell'\in\cT(\Gamma)}{\bf 1}_{\ell\neq\ell'}\prod_{i=1}^2\nu_{\ell\cap\R}(dx_i)\prod_{j=3}^4\nu_{\ell'\cap\R}(dx_j)\right],\\
&G^{(14)(23)}(x_1,x_2,x_3,x_4)=G^{(12)(34)}(x_4,x_1,x_2,x_3).
\end{aligned}
\end{equation}
Here the expectation $\E$ is with respect to the $\CLE_\kappa$ configuration $\Gamma$ on $\hH$.
We also let
\[
G^{\rm total}(x_1,x_2,x_3,x_4)=G^{(1234)}(x_1,x_2,x_3,x_4)+G^{(12)(34)}(x_1,x_2,x_3,x_4)+G^{(14)(23)}(x_1,x_2,x_3,x_4).
\]
We also refer to~\eqref{eq:four-type-gf} as boundary four-point connectivities of $\CLE_\kappa$ since they are the counterparts of~\eqref{eq:connectivity-def} in the critical FK-$q$ percolation, whose scaling limit is conjectured to be $\CLE_\kappa$ with $\sqrt{q} = -2\cos(4\pi/\kappa)$.

Let $\sP$ be the collection of link patterns, i.e.
\[
\sP:=\{(1234),(12)(34),(14)(23),{\rm total}\}.
\]
Note that these boundary Green's functions satisfy the conformal covariance
\begin{equation*}
G^p(x_1,x_2,x_3,x_4)=\left(\prod_{i=1}^4|\phi'(x_i)|^{h}\right)\cdot G^p(\phi(x_1),\phi(x_2),\phi(x_3),\phi(x_4)),\quad p\in\sP
\end{equation*}
for any M\"obius transformation $\phi$ on $\hH$. Therefore, for each $p\in\sP$, there exists a function $U^p:(0,1)\to\R_+$ of the cross-ratio $\lambda=\frac{(x_2-x_1)(x_4-x_3)}{(x_4-x_2)(x_3-x_1)}\in(0,1)$ such that
\begin{equation}\label{eq:u-lambda}
G^p(x_1,x_2,x_3,x_4)=\left(\frac{(x_4-x_2)(x_3-x_1)}{(x_2-x_1)(x_4-x_3)(x_3-x_2)(x_4-x_1)}\right)^{2h}U^p (\lambda).
\end{equation}
Note that by symmetry, we have $U^{(14)(23)} (\lambda)=U^{(12)(34)}(1-\lambda)$.

The following result gives the differential equation satisfied by $U^p$.
\begin{theorem}\label{thm:ode}
For $\kappa\in(4,8)$ and $p\in\sP$, $U^p(\lambda)$ is smooth and solves the following third-order ODE
\begin{equation}\label{eq:ode}
\begin{aligned}
\frac12 \kappa^3\lambda^2 (1 - \lambda)^2U''' &+ \kappa^2\lambda(3\kappa - 16)(1 - \lambda)(1 - 2\lambda)U'' \\
&+ \kappa\left[ 3(\kappa - 4)(\kappa - 8) + \lambda(\lambda - 1)(18\kappa^2 - 212\kappa + 608) \right]U' \\&+ 6(2\lambda - 1)(\kappa - 4)(\kappa - 8)^2 U = 0.
\end{aligned}
\end{equation}
\end{theorem}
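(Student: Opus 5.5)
Our plan follows the four-step strategy outlined in the introduction. Only the first three steps are needed for Theorem~\ref{thm:ode}.

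\emph{Step 1: reduction to the bubble measure.} We first express each $G^p$ through boundary Green's functions of the $\SLE_\kappa$ bubble measure. We explore the $\CLE_\kappa$ on $\hH$ from a boundary point $x_0$. By the Markov property, the loops touching $\R$ are described by a Poisson point process of $\SLE_\kappa$ bubbles rooted along the boundary; equivalently, one may use the boundary-touching loop decomposition of~\cite{MW18}. For $G^{(1234)}$, we obtain an integral over the root $x_0$ of the bubble Green's function
\[
\mu_{x_0}^{\bub}\Big[\prod_{i=1}^4\nu_{\eta\cap\R}(dx_i)\Big],
\]
where $\mu^{\bub}_{x_0}$ denotes the bubble measure rooted at $x_0$. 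For $(12)(34)$, a similar expression arises that also involves the conditional CLE in the unexplored domains.

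By conformal covariance, it is most convenient to send the root to $\infty$, or to let the root merge with one of the $x_i$. In the latter case, the bubble measure rooted at $x_1$ carries the point $x_1$ itself. The outcome is that $U^p$, viewed as a function of the three remaining points, equals the bubble Green's function with root at $x_1$, up to a conformally covariant prefactor.

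\emph{Step 2: BPZ equation for chordal SLE.} Let $\eta$ be a chordal $\SLE_\kappa$ from $a$ to $b$, and consider its boundary Green's function $H(a,b;x_2,x_3,x_4)$ for hitting $dx_2,dx_3,dx_4$. We show that $H$ satisfies the second-order equation in $a$:
\[
\frac{\kappa}{2}\partial_a^2 H+\sum_{z\in\{b,x_2,x_3,x_4\}}\Big(\frac{2}{z-a}\partial_z-\frac{2\Delta_z}{(z-a)^2}\Big)H=0,
\]
with weights $\Delta_b=\frac{6-\kappa}{2\kappa}$ and $\Delta_{x_i}=h$.

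The argument is a martingale one. The quantity
\[
\prod_i g_t'(x_i)^h\,H(W_t,g_t(b);g_t(x_i))
\]
is a martingale, by the domain Markov property and the conformal covariance of the Minkowski content. Smoothness would come from H\"ormander hypoellipticity, as in Dub\'edat's approach, or alternatively from an a priori regularity result such as~\cite{zhan-boundary-gf}. Itô's formula then gives the PDE.

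\emph{Step 3: fusion.} The bubble measure is the limit $b\to a$ of chordal $\SLE_\kappa$, suitably renormalized by $(b-a)^{-2\Delta_b}$ up to constants. We apply Dub\'edat's fusion framework~\cite{dubedat2015}. Write the equation in $a$ together with the symmetric equation in $b$, which follows from reversibility for $\kappa\in(4,8)$. Then expand $H=(b-a)^{\gamma}\big(H_0+(b-a)H_1+(b-a)^2H_2+\cdots\big)$. Matching orders determines $H_1,H_2$ in terms of derivatives of $H_0$, and the next order yields a third-order equation for the fused function $H_0$.

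Finally, we substitute the Möbius-covariant ansatz~\eqref{eq:u-lambda}, using the three global Ward identities to eliminate the derivatives in the non-$\lambda$ directions. This reduces the PDE to the ODE~\eqref{eq:ode}. Linearity then lets us pass it to $G^{\rm total}$, and the symmetry $\lambda\mapsto1-\lambda$ carries it to $(14)(23)$; as a consistency check, the ODE is invariant under this map.

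\emph{Main obstacle.} The hardest step is the rigorous justification of fusion: we need existence and regularity of the expansion in $b-a$, uniformly enough to differentiate. Dub\'edat's theorem covers this for hypoelliptic systems, provided the limit exists and the correct exponent branch is selected. We would first establish convergence of the renormalized chordal Green's function to the bubble Green's function using the construction of the bubble measure as a limit of chordal SLE. We would then invoke~\cite{dubedat2015} to conclude that the limit satisfies the fused third-order equation.

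A secondary difficulty arises for $(12)(34)$, where two distinct loops are involved. Here we would write the Green's function as an integral over a second bubble in the domain cut out by the first. Alternatively, we could obtain it as the difference between $G^{\rm total}$, computed from an unconstrained bubble observable, and $G^{(1234)}+G^{(14)(23)}$, so that the same ODE applies by linearity.
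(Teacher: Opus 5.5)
Your plan is essentially the paper's proof: identify $G^{(1234)}$ with the bubble Green's function $G^{\bub}_{x_1}(x_2,x_3,x_4)$ via the counting-measure/bubble identification (Proposition~\ref{prop:touching-equals-bubble}), derive the BPZ pair for the chordal Green's function using H\"ormander smoothness (Lemma~\ref{lem:smooth}), fuse with Lemma~\ref{lem:dub} using the $O((y_2-y_1)^h)$ bound that comes from convergence to the bubble, and substitute the covariance ansatz. For $(12)(34)$, the paper makes your first option concrete with the chordal observable $F^{(2)}$ of Definition~\ref{def:F} (i.e.\ $G_{\hH,y_1,y_2}(x_1)$ times the expectation of $H_{\cU(\eta)}(x_2,x_3)^h$ under $\SLE_\kappa(\kappa-8)$), which obeys the same BPZ system and fuses to $G^{(12)(34)}$; your ``difference'' alternative is circular, since $G^{(14)(23)}$ is the same two-loop object, and the chordal law must be renormalized by $(y_2-y_1)^{-h}$, not $(b-a)^{-2\Delta_b}$.
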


Note that the third-order ODE~\eqref{eq:ode} also appeared in~\cite[Eq.(3.8)]{Gori:2018gqx}, where the $\CLE_\kappa$ boundary Green's functions~\eqref{eq:four-type-gf} are interpreted from the CFT perspective as correlation functions of the primary field $\phi_{1,3}$ with a level-three null vector.

For $\kappa\in(4,8)$, no closed-form solutions of~\eqref{eq:ode} are known in general. Note that $0$ is a regular singular point of~\eqref{eq:ode}, with indicial roots $0,h,3h+1$.
Then, in a neighborhood of $0$, we can use the Frobenius method to find three linearly independent solutions $V_0,V_h,V_{3h+1}$ of~\eqref{eq:ode} such that
\begin{equation}\label{def:v}
V_0(\lambda)=1+O(\lambda),\quad V_h(\lambda)=\lambda^h(1+O(\lambda)),\quad V_{3h+1}(\lambda)=\lambda^{3h+1}(1+O(\lambda)).
\end{equation}
Since~\eqref{eq:ode} only has two singular points $0$ and $1$, we know that the three Frobenius series $V_0,V_h,V_{3h+1}$ converge for all $\lambda\in(0,1)$.
When $h$ and $3h+1$ are both non-integers, the expressions $1+O(\lambda)$ above are indeed power series of $\lambda$. When $h$ or $3h+1$ is integer (for $\kappa\in(4,8)$, this only occurs when $\kappa=6,\frac{16}{3},\frac{24}{5}$), there might be logarithmic terms in $O(\lambda)$, and we find explicit solutions in these cases separately.
We refer readers to~\cite[Chapter XVI]{ODE} for more background on the Frobenius method.

The next theorem shows that the three types of boundary Green's functions~\eqref{eq:four-type-gf} can be uniquely expressed in terms of the three linearly independent solutions $V_0,V_h,V_{3h+1}$~\eqref{def:v} of~\eqref{eq:ode}.
\begin{theorem}\label{thm:identify}
There exist constants $C_1,C_2\in(0,\infty)$ such that for $\lambda\in(0,1)$, $U^{(14)(23)}(\lambda)=C_1V_{3h+1}(\lambda)$, $U^{(12)(34)}(\lambda)=U^{(14)(23)}(1-\lambda)=C_1V_{3h+1}(1-\lambda)$, and $U^{\rm total}(\lambda)=C_2(V_0(\lambda)+\beta V_{3h+1}(\lambda))$. Here $\beta\in\R$ is the unique value with $V_0(1-\lambda)+\beta V_{3h+1}(1-\lambda)-1=o(\lambda^h)$, and the ratio $\frac{C_1}{C_2}$ is determined by $\frac{C_1}{C_2}V_{3h+1}(1-\lambda)\to1$ as $\lambda\to0$.
\end{theorem}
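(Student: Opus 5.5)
By Theorem~\ref{thm:ode}, each $U^p$ is a smooth solution of \eqref{eq:ode} on $(0,1)$. Since $V_0,V_h,V_{3h+1}$ form a basis, we can write
\[
U^p=a_pV_0+b_pV_h+c_pV_{3h+1}.
\]
The plan is to pin down $(a_p,b_p,c_p)$ from the asymptotics of $U^p$ as $\lambda\to0$ and as $\lambda\to1$. In both limits these asymptotics are read off from the CLE geometry via \eqref{eq:u-lambda}. Under the normalization $x_1=0$, $x_3=1$, $x_4=\infty$, the cross-ratio is $\lambda\approx x_2$ when $x_2\to x_1$. The prefactor in \eqref{eq:u-lambda} then behaves like $\lambda^{-2h}$ times a constant.

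\textbf{Step 1: the pattern $(14)(23)$.} Take $\lambda\to0$, i.e.\ $x_2\to x_1$. For $G^{(14)(23)}$ we need a loop $\ell'$ through $x_2,x_3$ and a different loop $\ell$ through $x_1,x_4$. The two loops squeeze between $x_1$ and $x_2$, so this is a boundary three-arm event near $x_1$ (two arms of one loop plus one arm of the other, with the Minkowski-content marks). I would show that
\[
G^{(14)(23)}=O\big((x_2-x_1)^{-2h+3h+1-\epsilon}\big),
\]
or at least that $U^{(14)(23)}(\lambda)=o(\lambda^h)$. This should follow from the boundary three-arm exponent of $\SLE_\kappa$ together with a one-point estimate for the boundary Green's function. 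Such a bound forces $a=b=0$, hence $U^{(14)(23)}=C_1V_{3h+1}$, with $C_1>0$ because $U^{(14)(23)}$ is positive. By symmetry, $U^{(12)(34)}(\lambda)=C_1V_{3h+1}(1-\lambda)$.

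\textbf{Step 2: the total.} As $\lambda\to0$, $U^{\rm total}$ tends to a positive constant, namely the product of two boundary two-point functions; I would prove this via the CLE Markov property. This gives $a\neq0$, and I would set $C_2=a$. I would also show $b=0$, i.e.\ that there is no $\lambda^h$ term. Writing
\[
G^{\rm total}(x_1,x_2,x_3,x_4)=G^{(12)(34)}+\big(G^{(1234)}+G^{(14)(23)}\big),
\]
the term $G^{(14)(23)}$ is already of order $\lambda^{3h+1}$ by Step 1. It remains to handle the correction term in the ``$x_1,x_2$ connected'' part. For this, condition on the loop through $x_1,x_2$ and explore it. The conditional law of the remaining configuration seen from $x_3,x_4$ is then a CLE in a domain with a wired arc near the merged point. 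The resulting correction to the two-point function at $x_3,x_4$ is governed by the subleading term of the Miller--Werner partition function with two wired boundary arcs. That subleading term decays faster than $\lambda^h$, and this gives $b=0$. So $U^{\rm total}=C_2(V_0+\beta'V_{3h+1})$.

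\textbf{Step 3: matching at $\lambda\to1$.} Applying the same reasoning at $x_3\to x_2$ (i.e.\ $\lambda\to1$) gives $U^{\rm total}(1-\lambda)=C_2(1+o(\lambda^h))$. Using the analogous expansion of the $(12)(34)$ and $(14)(23)$ pieces, the coefficient $\beta'$ must equal the $\beta$ defined in the statement. Uniqueness of $\beta$ follows because $V_0(1-\cdot)$ and $V_{3h+1}(1-\cdot)$, re-expanded at $0$ in the basis $V_0,V_h,V_{3h+1}$, have $\lambda^h$-coefficients that cannot both vanish. If both vanished, a combination would be simultaneously in the span of $\{V_0,V_{3h+1}\}$ near both endpoints, and I would check that this contradicts positivity of $U^{(12)(34)}$.

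\textbf{Step 4: the ratio $C_1/C_2$.} As $\lambda\to0$, we have $U^{(12)(34)}(\lambda)=C_1V_{3h+1}(1-\lambda)$, while $U^{\rm total}\to C_2$. Moreover, $U^{(12)(34)}/U^{\rm total}\to1$, because when $x_1,x_2$ merge, the part of $G^{(1234)}$ whose pattern differs from $(12)(34)$ is only $O(\lambda^h)$ relative; this is a one-arm estimate at $x_1,x_2$ for connection to $x_3$. Hence $\frac{C_1}{C_2}V_{3h+1}(1-\lambda)\to1$.

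The main obstacle is Step 2, namely excluding the $V_h$ component from $U^{\rm total}$, since $\lambda^h$ is subleading and not visible from leading-order asymptotics. The plan is to prove a quantitative estimate: the conditional two-point function at $x_3,x_4$, given the configuration near $x_1,x_2$, differs from its unconditioned value by $O(\lambda^{h+\delta})$. This would rely on the rapid decay of the subleading term of the wired-arc CLE partition function.
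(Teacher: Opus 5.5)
Your proposal follows the paper's proof. $U^{(14)(23)}=o(\lambda^h)$ comes from an arm-type bound; the paper gets it by stochastically dominating $H_{\cU(\gamma)}(x_2,x_3)$ under the two-pinned bubble by its chordal $\SLE_\kappa$ counterpart and using the explicit hypergeometric-SLE partition function, rather than a three-arm estimate. The $V_h$ component of $U^{\rm total}$ is then excluded by Proposition~\ref{prop:identify-v0}, whose proof is exactly your Step~2 idea: the summed two-wired-arc Miller--Werner partition function $Z(\tau)$ in \eqref{eq:z-tau} has only an $O(\tau^2)$ relative correction. Finally, the symmetry $\lambda\leftrightarrow1-\lambda$ and $U^{(12)(34)}/U^{\rm total}\to1$ fix $\beta$ and $C_1/C_2$.

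For uniqueness of $\beta$ you need no statement about $\lambda^h$-coefficients or positivity. Two admissible values would force $V_{3h+1}(1-\lambda)=o(\lambda^h)$, which contradicts $\frac{C_1}{C_2}V_{3h+1}(1-\lambda)\to1$ from your own Step~4; this is how the paper argues.
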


As we have emphasized before, establishing $U^{\rm total}(\lambda)=1+o(\lambda^h)$ as $\lambda\to0$ requires a delicate analysis of boundary Green's functions~\eqref{eq:four-type-gf}; see also the discussions in Section~\ref{sec:discussion}.

For $\kappa=6$ and $h=\frac{1}{3}$, the functions $V_0,V_{1/3},V_2$ introduced in Section~\ref{sec:intro-perc} exactly correspond to the three solutions~\eqref{def:v}. Using a basic one-arm coupling in~\cite{conijn15}, one can show that the connectivities defined in~\eqref{eq:connectivity-def} agree with the $\CLE_6$ boundary four-point Green's functions in~\eqref{eq:four-type-gf}; see Appendix~\ref{appendix:discrete} for details.
Theorem~\ref{thm:percolation} is then the $\kappa=6$ case of Theorem~\ref{thm:ode} and~\ref{thm:identify}.

As another special case, taking $\kappa=\frac{16}{3}$ gives the boundary four-point connectivities of the critical FK-Ising model. Consider the critical FK-Ising model on $\delta\Z^2\cap\hH$ with free boundary condition. In analogy with the Bernoulli percolation, the limit $P^{(1234)}_{\rm{FK}}(x_1,x_2,x_3,x_4)=\lim_{\delta\to0}\delta^{-2}\P^\delta_{\rm{FK}}[x_1^\delta\leftrightarrow x_2^\delta\leftrightarrow x_3^\delta\leftrightarrow x_4^\delta]$ exists~\cite[Theorem 1.4]{CF-FK}, and agrees with the $\CLE_{16/3}$ boundary four-point Green's function (see also Appendix~\ref{appendix:discrete}). For other $p\in\sP$, $P^p_{\rm{FK}}(x_1,x_2,x_3,x_4)$ is defined similarly. By solving~\eqref{eq:ode} with $\kappa=\frac{16}{3}$, these connectivities are explicit; see Section~\ref{sec:kappa=16/3}. In particular, we have
\begin{theorem}\label{thm:fk}
Let $R_{\rm{FK}}(\lambda)=\frac{P^{(12)(34)}_{\rm{FK}}(x_1,x_2,x_3,x_4)}{P^{\rm total}_{\rm{FK}}(x_1,x_2,x_3,x_4)}$, and for $x\in(0,1)$, define
\begin{equation}\label{eq:fk-g}
g(x) = -\frac{(1 - x)^{3/2} \left( (2 - 4x) \, {}_2F_1\!\left(\frac{3}{2}, \frac{7}{2}; 3; 1 - x\right) - 3x(1 - x) \, {}_2F_1\!\left(\frac{5}{2}, \frac{9}{2}; 4; 1 - x\right) \right) x^{3/2}}{2\left(x + (1 - x)^2\right)^2}.
\end{equation}
Then we have $R_{\rm{FK}}(\lambda)=A_{\rm{FK}}\int_0^\lambda g(x)dx$, with $A_{\rm{FK}}=(\int_0^1 g(x)dx)^{-1}\approx1.19948$.
\end{theorem}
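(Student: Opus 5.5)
Theorem~\ref{thm:fk} is the specialization of Theorems~\ref{thm:ode} and~\ref{thm:identify} to $\kappa=\frac{16}{3}$, where $h=\frac12$ and $3h+1=\frac52$. The plan has four steps: identify the FK-Ising connectivities with the $\CLE_{16/3}$ Green's functions, solve~\eqref{eq:ode} in closed form, read off the identification of Theorem~\ref{thm:identify}, and normalize.

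\textbf{Step 1 (reduction to $U^p$).} By Appendix~\ref{appendix:discrete} and~\cite[Theorem 1.4]{CF-FK}, each $P^p_{\rm FK}$ equals a common constant multiple of $G^p$. The prefactor in~\eqref{eq:u-lambda} is the same for every $p$, so it cancels in the ratio:
\[
R_{\rm FK}(\lambda)=\frac{U^{(12)(34)}(\lambda)}{U^{\rm total}(\lambda)}.
\]

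\textbf{Step 2 (closed-form solution of~\eqref{eq:ode}).} At $\kappa=16/3$ the indicial exponents at $0$ are $0,\frac12,\frac52$, which differ by integers. I expect~\eqref{eq:ode} to reduce in order: the ratio $R=U^{(12)(34)}/U^{\rm total}$ should satisfy a simpler equation. Concretely, I would substitute $U=U^{\rm total}\cdot R$ and look for a first integral, so that $R'$ satisfies a second-order hypergeometric-type equation. The form of $g$, namely $x^{3/2}(1-x)^{3/2}$ times combinations of ${}_2F_1(\tfrac32,\tfrac72;3;1-x)$ and its derivative, divided by $(x+(1-x)^2)^2$, suggests the following. $U^{\rm total}$ should be an elementary function proportional to $(1-\lambda+\lambda^2)^2$ up to powers of $\lambda(1-\lambda)$. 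The Wronskian-type expression $U^{\rm total}\,(U^{(12)(34)})'-U^{(12)(34)}\,(U^{\rm total})'$ should then solve a second-order Gauss hypergeometric equation in $1-\lambda$.

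To carry this out, I would first verify directly that an elementary candidate $E(\lambda)$ solves~\eqref{eq:ode} at $\kappa=16/3$. Candidates are built from $1-\lambda+\lambda^2$, which is symmetric under $\lambda\mapsto 1-\lambda$ and behaves as $1-\lambda+O(\lambda^2)$. I would then use reduction of order, writing $U=E\int w$, to get a second-order equation for $w$ and match it with the hypergeometric equation having parameters $(\tfrac32,\tfrac72;3)$. The derivative identity $\frac{d}{dz}{}_2F_1(a,b;c;z)=\frac{ab}{c}\,{}_2F_1(a+1,b+1;c+1;z)$ explains the second term in $g$. This is the step I expect to be the main obstacle: the computation is routine once the right ansatz is found, but finding it requires guesswork, and logarithmic terms (parameter $c=3$, an integer) must be checked to sit in the correct branch.

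\textbf{Step 3 (identification).} By Theorem~\ref{thm:identify}, $U^{(12)(34)}(\lambda)=C_1V_{5/2}(1-\lambda)$ and $U^{\rm total}$ is proportional to $V_0+\beta V_{5/2}$ with $U^{\rm total}(1-\lambda)=1+o(\lambda^{1/2})$. I would check that the elementary solution $E$ is indeed $C_2(V_0+\beta V_{5/2})$. Both are solutions, and the normalization $1+o(\lambda^{1/2})$ at both ends determines this combination, because the $V_h$ components must vanish at both endpoints. Next, $R$ solves the reduced equation, and $R(\lambda)\to0$ as $\lambda\to0$ since $U^{(12)(34)}(\lambda)=O(\lambda^{5/2})$ near $\lambda=0$, the $(12)(34)$ pattern requiring three arms there. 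Hence $R(\lambda)=\int_0^\lambda R'$. I would then identify $R'$ as $A g$ for the choice of ${}_2F_1$ solution that is regular at $x=1$, i.e.\ at argument $0$, picked out by the behavior of $V_{5/2}(1-\lambda)$ near $\lambda=1$.

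\textbf{Step 4 (normalization).} As $\lambda\to1$ we have $U^{(14)(23)}(\lambda)\to0$ and $U^{(1234)}(\lambda)/U^{\rm total}(\lambda)\to0$. The latter holds because the $(1234)$ term carries extra decay: $U^{\rm total}(\lambda)\to 1$ while $U^{(1234)}(\lambda)\to0$ by the $\lambda^{h}$-order estimate in Theorem~\ref{thm:identify}. Hence $R_{\rm FK}(1)=1$, which fixes $A_{\rm FK}=(\int_0^1 g)^{-1}$. The numerical value $\approx1.19948$ is then a direct numerical integration.
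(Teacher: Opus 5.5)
Your route is the paper's: specialize~\eqref{eq:ode} to $\kappa=\frac{16}{3}$, use the elementary solution, reduce to a Gauss hypergeometric equation, apply Theorem~\ref{thm:identify}, and normalize at $\lambda=1$. Two small points on Step 2 and Step 3: the elementary solution is $V_0(\lambda)=1-\lambda+\lambda^2$ itself, not its square, since the square in the denominator of $g$ is the $V_0^2$ from $g\propto(U/V_0)'$; and $\beta=0$ because $V_0$ is symmetric, so $U^{\rm total}\propto V_0$.

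The gap is in Steps 3--4: you reverse the endpoint behaviour of the two two-cluster patterns. Theorem~\ref{thm:identify}, which you quote in the same sentence, gives $U^{(12)(34)}(\lambda)=C_1V_{5/2}(1-\lambda)\to C_2>0$ as $\lambda\to0$. When $x_2\to x_1$ the pattern $(12)(34)$ dominates, and the three-arm decay $O(\lambda^{5/2})$ belongs to $(14)(23)$. Symmetrically, as $\lambda\to1$ it is $U^{(12)(34)}$ that vanishes, while $U^{(14)(23)}\to C_2$. So $P^{(12)(34)}_{\rm FK}/P^{\rm total}_{\rm FK}$ equals $1$ at $\lambda=0$ and $0$ at $\lambda=1$. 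The conditions $R(0)=0$, $R(1)=1$ that you impose belong to $P^{(14)(23)}_{\rm FK}/P^{\rm total}_{\rm FK}$. Step 3 is also inconsistent on its own terms. You match $R'$ with $+A_{\rm FK}\,g$ through $V_{5/2}(1-\lambda)$. But $V_{5/2}(1-\lambda)/V_0(\lambda)$ decreases to $0$ as $\lambda\to1$, while $g(x)\sim(1-x)^{3/2}>0$ there.

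Consequently the printed formula cannot be proved as stated, and you should not force it through with these asymptotics. The paper's proof in fact treats the $(14)(23)$ ratio. It writes $G^{(14)(23)}\propto V_{5/2}(\lambda)$ and $R_{\rm FK}=A_{\rm FK}V_{5/2}(\lambda)/V_0(\lambda)=A_{\rm FK}\int_0^\lambda (V_{5/2}/V_0)'$. The integrand there is the solution of the reduced equation behaving like $x^{3/2}$ at $x=0$, which is unique up to scaling.

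Your remark that the displayed $g$ is the solution regular at $x=1$ is correct. There ${}_2F_1(\cdot\,;1-x)\to1$ and $g(x)\sim(1-x)^{3/2}$. At $x=0$ the ${}_2F_1$'s blow up and $g(x)\sim\frac{32}{21\pi}x^{-1/2}$; this is exactly the source of the $\frac{64}{21\pi}\varepsilon^{1/2}$ term in the introduction's expansion. The reduced equation is invariant under $x\mapsto1-x$, so $(V_{5/2}/V_0)'$ is a positive multiple of $g(1-x)$. Running your plan with the correct boundary values then gives
\[
\frac{P^{(12)(34)}_{\rm FK}}{P^{\rm total}_{\rm FK}}=A_{\rm FK}\int_\lambda^1 g(x)\,dx=1-A_{\rm FK}\int_0^\lambda g(x)\,dx,\qquad
\frac{P^{(14)(23)}_{\rm FK}}{P^{\rm total}_{\rm FK}}=A_{\rm FK}\int_0^\lambda g(1-x)\,dx,
\]
with the same $A_{\rm FK}=(\int_0^1 g)^{-1}$.

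This corrected version is what you should prove. Pin down which Frobenius solution $g$ is by checking its local behaviour at both endpoints. Note also that the paper's assertion $g(x)=x^{3/2}(1+O(x))$ as $x\to0$ holds for the displayed $g$ only after the substitution $x\mapsto1-x$.
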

Theorem~\ref{thm:fk} proves the conjectural formula in~\cite[Eq.(6)]{GV}. Remarkably, taking $\lambda=1-\varepsilon$ with $\varepsilon\to0$ (i.e.~$x_2\to x_3$ with $x_1,x_3,x_4$ fixed), we have
\begin{equation*}
R_{\rm{FK}}(1-\varepsilon)=1-A_{\rm{FK}}\varepsilon^{\frac{1}{2}}\left( 
\frac{64}{21\pi}
+\frac{16}{21\pi}\varepsilon
-\frac{941-840\log2-210|\log\varepsilon|}{525\pi}\varepsilon^2
+ O\bigl(\varepsilon^3|\log \varepsilon|\bigr)\right).
\end{equation*}
See also~\cite[Eq.(S12)]{GV}. To our knowledge, this provides the first rigorous evidence that a logarithmic singularity appears in the correlation functions of the critical FK-Ising model.

\begin{remark}
In~\cite[Conjecture 1.2]{feng2024multiplesle}, the authors formulate a precise conjecture for the limiting connection probabilities of general critical loop $O(n)$ models, which also give the conjectural connection probabilities for multichordal $\CLE_\kappa$ with $2N$ boundary arcs (see~\cite{ambrosio2025}). In the regime $\kappa\in(4,8)$, this conjecture was proved for $n=\sqrt{2}$ (FK-Ising)~\cite{FPW22} and for $2N=4$~\cite{MW18}. In principle, by taking $2N=8$ and shrinking four of the eight boundary arcs, one could obtain the CLE boundary Green's functions~\eqref{eq:four-type-gf} via renormalized limits of the corresponding conjectural connection probabilities. However, this does not seem practically feasible, since the explicit formulas for these conjectural connection probabilities involve the pure partition functions, whose constructions are highly nontrivial~\cite{FK15,KP20,kp16,wu-hsle,Zhan-multiple,feng2024multiplesle}. It is therefore unclear whether our results can be recovered in this way from these conjectural formulas.
\end{remark}

\subsection{Discussions}\label{sec:discussion}
We now briefly discuss our proof strategy. The first step is to identify a precise correspondence between $\CLE_\kappa$ and the $\SLE_\kappa$ bubble measure for $\kappa\in(4,8)$. To achieve this, we show that the loop sampled from the counting measure on boundary-touching loops in a $\CLE_\kappa$ configuration is equal in law to the unrooted $\SLE_\kappa$ bubble measure, see Proposition~\ref{prop:touching-equals-bubble}. This identification allows us to express the boundary Green's functions of $\CLE_\kappa$~\eqref{eq:four-type-gf} in terms of those of the $\SLE_\kappa$ bubble measure. Furthermore, $\SLE_\kappa$ bubble measure is also the weak limit of the chordal $\SLE_\kappa$ when its starting point approaches its end point. According to~\cite{fakhry2023}, the boundary Green's functions of chordal $\SLE_\kappa$ are finite and satisfy a martingale property as the chordal $\SLE_\kappa$ grows. Combined with their smoothness, the martingale property then yields a pair of second-order PDEs, known as BPZ equations. Establishing such smoothness is nontrivial and relies on H\"ormander's hypoellipticity; see Lemma~\ref{lem:smooth}, as inspired by~\cite{dub15localization,fakhry2023}.
Then, we apply a fusion procedure to these PDEs to derive a third-order differential equation satisfied by boundary Green's functions of the $\SLE_\kappa$ bubble, which corresponds to collapsing two marked boundary points. As mentioned before, this step follows from the framework in~\cite{dubedat2015}, which also appeared in recent works~\cite{peltola-c--2,peltola-c-1} for specific values of $\kappa$. This proves Theorem~\ref{thm:ode}.

Theorem~\ref{thm:identify} requires a delicate asymptotic analysis for the boundary Green's functions defined in~\eqref{eq:four-type-gf}. Its crucial ingredient is Proposition~\ref{prop:identify-v0}, which establishes the $o(\lambda^h)$ subleading order in the sum of the Green's functions $G^{(1234)}(x_1,x_2,x_3,x_4)+G^{(12)(34)}(x_1,x_2,x_3,x_4)$ as the cross-ratio $\lambda\to0$. The proof of Proposition~\ref{prop:identify-v0} is arguably the most subtle part of this paper. In particular, it involves re-expressing the Green's functions via a careful decomposition of boundary-touching $\CLE_\kappa$ loops (see Corollary~\ref{cor:12-34}), as well as noting the rapid subleading decay in the partition function of $\CLE_\kappa$ with two wired boundary arcs, based on its connection probability~\cite{MW18} (see~\eqref{eq:z-tau}). According to~\cite[Lemma 6.1]{feng2024multiplesle}, such rapid decay of subleading terms also holds for Coulomb gas integrals, which are conjectured to be the partition functions of general multichordal $\CLE_\kappa$. Theorems~\ref{thm:percolation} and~\ref{thm:fk} then follow by taking $\kappa=6$ and $\kappa=\frac{16}{3}$.

Our approach to the boundary four-point Green's functions can be extended to the one-bulk and two-boundary connectivities of $\CLE_\kappa$.
Let $x_1,x_2\in\R$, $z\in\hH$, and consider the Green's function
\begin{equation}\label{eq:def-gf-bulk}
G(x_1,x_2,z)dx_1dx_2dz=\E\left[\sum_{\ell\in\cT(\Gamma)}\Upsilon_\ell(dz)\prod_{i=1}^2\nu_{\ell\cap\R}(dx_i)\right].
\end{equation}
Here, $\Upsilon_\ell$ denotes the \emph{Miller-Schoug measure}~\cite{miller2023existence} of the $\CLE_\kappa$ gasket surrounded by $\ell$. The Miller-Schoug measure is a canonical measure supported on the $\mathrm{CLE}_\kappa$ gasket, characterized by conformal covariance and the domain Markov property. It is conjectured to coincide with the $(2-\alpha)$-dimensional Minkowski content of the $\mathrm{CLE}_\kappa$ gasket, where $\alpha:=\frac{(3\kappa-8)(8-\kappa)}{32\kappa}$. For $\CLE_6$ gasket, its Miller-Schoug measure is shown to be the scaling limit of the normalized counting measure on the critical percolation cluster~\cite{gps-pivotal,CL24}. Consequently, $G(x_1,x_2,z)$ can be interpreted as the normalized probability that $x_1$, $x_2$, and $z$ belong to the same cluster.
\begin{theorem}\label{thm:bulk}
For $\kappa\in(4,8)$, let $h=\frac{8}{\kappa}-1$ and $\alpha$ be as above. Then
\begin{equation}\label{eq:bulk-final}
G(x_1,x_2,z)=C|x_2-x_1|^{-h}{\rm Im}(z)^{h-\alpha}|z-x_1|^{-h}|z-x_2|^{-h}
\end{equation}
for some constant $C\in(0,\infty)$.
\end{theorem}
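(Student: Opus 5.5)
We will follow the same route as for the four-point case, with the bulk point $z$ replacing the pair $(x_3,x_4)$. The point is that with one bulk and two boundary points, Möbius covariance leaves no free cross-ratio, so the answer is determined up to a constant once covariance is established.

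\textbf{Step 1: reduction to the SLE bubble.} By Proposition~\ref{prop:touching-equals-bubble}, the counting measure on boundary-touching loops of $\CLE_\kappa$ in $\hH$ equals the unrooted $\SLE_\kappa$ bubble measure. Using this, we rewrite $G(x_1,x_2,z)$ as an integral against the bubble measure of
\[
\Upsilon_\ell(dz)\,\nu_{\ell\cap\R}(dx_1)\,\nu_{\ell\cap\R}(dx_2).
\]
Here $\Upsilon_\ell$ is the Miller--Schoug measure of the $\CLE_\kappa$ gasket inside $\ell$. Conditionally on $\ell$, this gasket is an independent $\CLE_\kappa$ gasket in the region surrounded by $\ell$ (with the appropriate boundary conditions). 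Conformal covariance of the Miller--Schoug measure with exponent $2-\alpha$ then gives the following: conditionally on $\ell$, the intensity of $\Upsilon_\ell(dz)$ is $c\,\CR(z,D_\ell)^{-\alpha}dz$, where $D_\ell$ is the domain surrounded by $\ell$, together with the indicator that $z$ lies in that domain. Hence
\[
G(x_1,x_2,z)=c\int \mathbf 1_{z\in D_\ell}\,\CR(z,D_\ell)^{-\alpha}\,\nu_{\ell\cap\R}(dx_1)\,\nu_{\ell\cap\R}(dx_2)\,\bub(d\ell).
\]
We then need finiteness of this quantity. We get it from the boundary Green's function bounds for chordal SLE used earlier, combined with a standard one-point estimate at $z$.

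\textbf{Step 2: covariance and the resulting formula.} The bubble measure, $\nu_{\ell\cap\R}$ and $\Upsilon$ are all conformally covariant. Hence for every Möbius $\phi$ of $\hH$,
\[
G(x_1,x_2,z)=|\phi'(x_1)|^h\,|\phi'(x_2)|^h\,|\phi'(z)|^{2-\alpha}\;G(\phi(x_1),\phi(x_2),\phi(z)).
\]
We also need the scaling exponent of the bulk measure, which we check against the one-point function: $G$ should behave like ${\rm Im}(z)^{h-\alpha}$ times factors of $|z-x_i|$. Now $\mathrm{PSL}_2(\R)$ has real dimension $3$, and it acts simply transitively on triples $(x_1,x_2,z)$ with $x_1<x_2$ and $z\in\hH$ (this configuration space also has dimension $3$). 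So $G$ is determined by its value at a single configuration, say $(0,\infty,i)$ suitably normalized.

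It remains to verify that the right-hand side of~\eqref{eq:bulk-final} has the same covariance. This is a direct check using $|\phi(a)-\phi(b)|=|\phi'(a)|^{1/2}|\phi'(b)|^{1/2}|a-b|$ and ${\rm Im}\,\phi(z)=|\phi'(z)|\,{\rm Im}(z)$. The powers of $|\phi'(x_i)|$ give $h/2+h/2=h$ each. The power of $|\phi'(z)|$ is $(h-\alpha)-h=-\alpha$, which must be matched against the bulk exponent of $\Upsilon$. Establishing that the bulk exponent is exactly $-\alpha$, equivalently a bulk dimension $2-\alpha$ interacting correctly with the conditional law, is the main bookkeeping step. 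Once it holds, uniqueness gives~\eqref{eq:bulk-final}, and positivity of $C$ follows since $G>0$.

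\textbf{Main obstacle.} In principle no ODE is needed, although the fusion/BPZ machinery would give a consistency check. The delicate part is Step 1: justifying that the conditional intensity of the Miller--Schoug measure inside a bubble is exactly $c\,\CR(z,D_\ell)^{-\alpha}$, and that the resulting integral is finite near $x_1,x_2$ and near $z\to\R$. For finiteness we would first try to bound the contributions near $x_1,x_2$ by the chordal $\SLE_\kappa$ boundary two-point estimates of~\cite{fakhry2023}, and the contribution as $z\to\R$ by the one-point estimate at $z$.
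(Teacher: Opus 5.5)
There is a genuine gap, and it lies in Step 2, which carries the whole argument.

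\textbf{The dimension count is wrong.} The configuration space $\{(x_1,x_2,z): x_1<x_2,\ z\in\hH\}$ is four-dimensional: two real boundary coordinates plus two real coordinates for the bulk point. Since $\mathrm{PSL}_2(\R)$ is three-dimensional, it cannot act transitively, and exactly one real invariant survives. Concretely, if you send $(x_1,x_2)\mapsto(0,\infty)$, the stabilizer is only the dilation group, and $\arg z\in(0,\pi)$ cannot be removed. The paper encodes this invariant as $\lambda=\frac{(x_2-x_1)(\bar z-z)}{(z-x_1)(\bar z-x_2)}$, which ranges over the circle $|\lambda-1|=1$.

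Covariance therefore only gives $G=|x_2-x_1|^{-2h}|z-\bar z|^{-\alpha}\Delta(\lambda)$ with $\Delta$ unknown. The entire content of the theorem is that $\Delta(\lambda)\propto\lambda^{h}(1-\lambda)^{-h/2}$. In your normalization, this says the $\arg z$-dependence is exactly $\sin(\arg z)^{h-\alpha}$.

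A quick sign that the argument proves too much: it never uses the specific value $\alpha=\frac{(3\kappa-8)(8-\kappa)}{32\kappa}$. It would thus yield a factorized formula for any bulk weight. The paper points out that for other weights (e.g.\ $\alpha=0$ or $\alpha=1-\frac{\kappa}{8}$, which correspond to bi-chordal observables) the factorized form is false. Separately, the density covariance at $z$ should carry $|\phi'(z)|^{\alpha}$, not $|\phi'(z)|^{2-\alpha}$. Indeed $\CR(z,D)^{-\alpha}=|\phi'(z)|^{\alpha}\CR(\phi(z),\phi(D))^{-\alpha}$, i.e.\ you divide the $(2-\alpha)$-dimensional measure by Lebesgue measure.

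\textbf{The missing ingredient is the ODE you dismiss.} The paper approximates $G$ by the chordal quantity $F(y_1,y_2,x,z)\,dx=\int\mathbf{1}_{E_z(\eta)}\CR(z,D_z(\eta))^{-\alpha}\nu_{\eta\cap\R}(dx)\,\mu_{\hH,y_1,y_2}(d\eta)$ and proceeds as follows.
\begin{enumerate}
\item It proves smoothness of $F$ by the same H\"ormander argument as before.
\item It obtains BPZ equations at $y_1$ and $y_2$, treating $z$ and $\bar z$ as independent marked points of weight $\alpha/2$.
\item It runs the fusion of Section~\ref{sec:fusion} to get a third-order ODE for $\Delta$.
\item At this specific $\alpha$, the ODE has an explicit three-dimensional solution space around the ordinary point $\lambda_0=2$: the factorized solution plus two hypergeometric ones.
\item The two hypergeometric solutions are excluded by boundary behavior. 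They have different nonzero limits as $\lambda\to0$ along the two arcs of the circle, which is incompatible with $\Delta=o(1)$ as $x_2\to x_1$ or as ${\rm Im}\,z\to0$.
\end{enumerate}
Without this step, your Steps 1--2 only establish the covariant prefactor, not the formula.
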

For the case of critical Bernoulli percolation (i.e.~$\kappa=6$), Theorem~\ref{thm:bulk} recovers the factorization formula conjectured by~\cite{KSZ-connectivity} and later proved by~\cite{BI12}: the square of the one-bulk and two-boundary connectivity at $z\in \hH, x_1, x_2 \in \R$ factorizes as the product of two bulk-boundary connectivities at $(z, x_1)$ and $(z, x_2)$ and the boundary two-point connectivity at $(x_1, x_2)$. Theorem~\ref{thm:bulk} extends this remarkably simple structure to all $\kappa \in (4,8)$, showing that the factorization is in fact a universal feature of $\CLE_\kappa$ and hence of critical FK-$q$ percolation.

However, our approach does not apply to the bulk two-point correlation functions, which were recently conjectured in physics~\cite{downing2026}, as the tools of the Loewner equation are no longer available. The same limitation applies to the four-point functions on the sphere. For the latter, differential equations can be derived from the CFT perspective in some special cases~\cite{Gamsa_2006}, which admit exact closed-form solutions. It would be interesting to explore whether such equations admit a probabilistic interpretation in the SLE context.

\medskip
\noindent\textbf{Organization of the paper.} Section~\ref{sec:bubble&gf} is devoted to expressing the $\CLE_\kappa$ boundary Green's functions in terms of the $\SLE_\kappa$ bubble measure, and showing that the latter are the corresponding limits of chordal $\SLE_\kappa$. Then in Section~\ref{sec:fusion}, we prove Theorem~\ref{thm:ode}. In Section~\ref{sec:solution}, we prove Theorem~\ref{thm:identify} (and hence Theorems~\ref{thm:percolation} and~\ref{thm:fk}). We also include solutions to~\eqref{eq:ode} for other special $\kappa$'s in Section~\ref{sec:kappa-le-4}. In Section~\ref{sec:bulk-boundary}, we prove Theorem~\ref{thm:bulk}. In Appendix~\ref{appendix:gf}, we provide a detailed background on the definitions of SLE boundary Green's functions. Appendix~\ref{appendix:discrete} gives relevant discrete details in the proof of Theorems~\ref{thm:percolation} and~\ref{thm:fk}. We also provide MATLAB code in Appendix~\ref{appendix:matlab} that verifies the calculation of deriving~\eqref{eq:ode} from Section~\ref{sec:fusion}.

\medskip
\noindent\textbf{Acknowledgment.}
The author thanks Xin Sun for his suggestions on the draft of this paper.
This work was supported by the National Natural Science Foundation of China (Grant No.~12526204) and National Key R\&D Program of China (No.~2021YFA1002700).

\section{SLE bubble measures and CLE boundary Green's functions}\label{sec:bubble&gf}

In the following, we fix $\kappa\in(4,8)$ and $h:=\frac{8}{\kappa}-1\in(0,1)$.
For a finite measure $M$, we denote its total mass by $|M|$, and denote $M^\#:=\frac{1}{|M|}M$.
We usually use the superscript \# to indicate that some measure is a probability measure. For a compact set $A\subset\ol\hH$, we denote $\cU(A)$ to be the unbounded connected component of $\hH\setminus A$. For $z\in\C$ and $\varepsilon>0$, we let $B(z,\varepsilon):=\{w\in\C:|w-z|<\varepsilon\}$. For $x\in\R$, we also let $I(x,\varepsilon)$ be the open interval $(x-\varepsilon,x+\varepsilon)$.

We first review the background on the SLE bubble measure in Section~\ref{sec:sle-bubble}, and then show the relation between boundary-touching CLE loops and the (unrooted) SLE bubble measure in Section~\ref{sec:unrooted}. In Section~\ref{sec:boundary-gf}, we relate the CLE boundary four-point connectivities~\eqref{eq:four-type-gf} to the boundary Green's functions of SLE bubble and of chordal SLE. In Section~\ref{sec:smoothness}, we show the smoothness of these boundary Green's functions, thus obtaining a pair of second-order PDEs satisfied by the boundary Green's functions of chordal SLE, which will be the basis of the fusion procedure in Section~\ref{sec:fusion}.

\subsection{SLE bubble measure}\label{sec:sle-bubble}

The $\SLE_\kappa$ bubble measure was first introduced in~\cite{shef-werner-cle}, and systematically studied in~\cite{zhan-bubble}. For $x,y\in\R$, let $\mu_{\hH,x,y}^\#$ be the law of the chordal $\SLE_\kappa$ from $x$ to $y$. Then the $\SLE_\kappa$ bubble measure $\mu_x^\bub$ rooted at $x$ is defined by the weak limit
\begin{equation}\label{eq:bubble-def}
\mu_x^\bub=\lim_{\varepsilon\to0}\varepsilon^{-h}\mu_{\hH,x,x+\varepsilon}^\#.
\end{equation}
For $\gamma$ sampled from $\mu_x^\bub$, the $(1-h)$-dimensional Minkowski content measure $\nu_{\gamma\cap\R}$ of $\gamma\cap\R$ exists~\cite[Theorem 6.17]{zhan-boundary-gf}.
According to~\cite[Theorem 4.8]{zhan-bubble}, we can define the $\SLE_\kappa$ bubble measure $\mu_{x,y}^\bub$ with two marked points $x,y$ such that
\begin{equation}\label{eq:bubble-two-pinned}
\mu_{x,y}^\bub(d\gamma)dy=\nu_{\gamma\cap\R}(dy)\mu_x^\bub(d\gamma).
\end{equation}
Here the total mass of $\mu_{x,y}^\bub(d\gamma)$ is $C|y-x|^{-2h}$, where $C\in(0,\infty)$ is a fixed constant introduced in~\cite[Eq.(58)]{zhan-bubble}.
Such $\mu_x^\bub$ and $\mu_{x,y}^\bub$ corresponds to the one-point and two-point pinned loop measures in~\cite{shef-werner-cle}, respectively. According to~\cite[Section 4.2]{zhan-bubble}, for $x<y$, the normalized probability measure $(\mu_{x,y}^\bub)^\#$ can be decomposed as follows: first sample the chordal $\SLE_\kappa(2)$ curve $\eta$ on $\hH$ from $x$ to $y$ with force point at $x-$, and then sample the chordal $\SLE_\kappa$ curve $\eta'$ on the remaining unbounded domain $\cU(\eta)$ from $y$ to $x$. Then the concatenation of $\eta$ and $\eta'$ has the same law as $(\mu_{x,y}^\bub)^\#$. The marginal law of $\eta'$ is the chordal $\SLE_\kappa(2)$ from $y$ to $x$, with force point at $y-$.

From~\eqref{eq:bubble-def}, we also introduce the \emph{unrooted} $\SLE_\kappa$ bubble measure $\mu^\bub$ via
\begin{equation}\label{eq:unrooted-def}
\mu^\bub(d\gamma)=\frac{1}{|\nu_{\gamma\cap\R}|^2}\int_{\R\times\R}\mu_{x,y}^\bub(d\gamma)dxdy.
\end{equation}
By~\cite[Theorem 4.13]{zhan-bubble}, $\mu^\bub$ then is invariant under M\"obius transforms on $\hH$, with
\begin{align}
\nu_{\gamma\cap\R}(dx)\mu^\bub(d\gamma)&=\mu_x^\bub(d\gamma)dx,\label{eq:rooted-unrooted-1}\\
\nu_{\gamma\cap\R}(dx)\nu_{\gamma\cap\R}(dy)\mu^\bub(d\gamma)&=\mu_{x,y}^\bub(d\gamma)dxdy.\label{eq:rooted-unrooted-2}
\end{align}
We refer readers to~\cite{zhan-bubble} for further background on $\SLE_\kappa$ bubble measures.

The following result from~\cite{shef-werner-cle} links $\CLE_\kappa$ to the $\SLE_\kappa$ bubble measure. The original statement in~\cite{shef-werner-cle} is for simple $\CLE_\kappa$, i.e.~$\kappa\in(\frac{8}{3},4]$. However, since the proof there only uses the conformal Markovian exploration process of $\CLE_\kappa$, which is based on the domain Markov property of $\CLE_\kappa$ and conformal invariance, the result is still valid for $\kappa\in(4,8)$.
\begin{lemma}[{\cite{shef-werner-cle}}]\label{lem:bubble-cle}
Suppose $z\in\hH$, and $\Gamma$ is a $\CLE_\kappa$ on $\hH$. Let $T_\varepsilon(z)$ (resp.~$\wh T_\varepsilon(z)$) be the event that the $\CLE_\kappa$ loop surrounding $z$ intersects $B(0,\varepsilon)$ (resp.~$I(0,\varepsilon)$). Then as $\varepsilon\to0$, both $\P[T_\varepsilon(z)]$ and $\P[\wh T_\varepsilon(z)]$ are $\varepsilon^{h+o(1)}$; conditioned on $T_\varepsilon(z)$ (resp.~$\wh T_\varepsilon(z)$), the conditional law of $\ell$ converges to $\mu_0^\bub$ conditioned to surround $z$.
\end{lemma}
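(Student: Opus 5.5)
We would follow Sheffield--Werner's conformal Markovian exploration argument, checking that it only uses the domain Markov property and conformal invariance of $\CLE_\kappa$, both available for $\kappa\in(4,8)$. We treat the event $\wh T_\varepsilon(z)$ first and then show that $T_\varepsilon(z)$ differs from it in a negligible way.

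The first step is to explore $\Gamma$ from a small boundary segment. Fix a small $\delta$ and discover all loops meeting the semicircle $\partial B(0,\delta)\cap\hH$. Let $D_\delta$ be the unbounded component of the complement of the discovered loops and their interiors. Then map $D_\delta$ back to $\hH$ by $g_\delta$, normalized with $g_\delta(z)\to\infty$ hydrodynamically or fixing $z$ suitably. By the domain Markov property, the loops in $D_\delta$ form an independent $\CLE_\kappa$. Iterating at infinitesimal scale gives a Poisson point process of ``bubbles'' rooted at the marked boundary point. Each discovered loop, seen from the current domain, is an excursion of a Poissonian exploration. The intensity measure of this process is conformally invariant under maps fixing the root and $\infty$, up to a scaling factor, and it satisfies the Markovian restriction property. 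By the uniqueness argument of \cite{shef-werner-cle} (the one-parameter family is fixed by the scaling exponent), the intensity must be a constant multiple of $\mu_0^\bub$. This identifies the limiting conditional law: the loop surrounding $z$ is the first Poisson bubble that surrounds $z$. Conditioned on touching a tiny neighbourhood of the root, its law is the normalized restriction of the intensity measure to loops surrounding $z$.

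The second step gives the exponent and the convergence for $\wh T_\varepsilon(z)$. We use $\mu_0^\bub[\gamma \text{ surrounds } z,\ \gamma\cap I(0,\varepsilon)\ne\emptyset]$ together with the scaling relation $\mu_x^\bub=\lim\varepsilon^{-h}\mu^\#_{\hH,x,x+\varepsilon}$ from \eqref{eq:bubble-def}. This yields $\P[\wh T_\varepsilon(z)]=\varepsilon^{h+o(1)}$, and in fact an asymptotic $c\,\varepsilon^h$. One could alternatively get the exponent from the boundary one-arm exponent of $\SLE_\kappa$: the probability that a chordal $\SLE_\kappa$ hits $I(0,\varepsilon)$ from afar is of order $\varepsilon^{8/\kappa-1}$. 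The conditional law then converges weakly to $\mu_0^\bub(\cdot\mid \text{surrounds } z)$, by the local description of the loop near its touching point as a Poisson bubble rooted in $I(0,\varepsilon)$. We then recentre the root to $0$ using translation covariance and the continuity in the root of $\mu_x^\bub$.

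The third step compares $T_\varepsilon(z)$ with $\wh T_\varepsilon(z)$. The inclusion $\wh T_\varepsilon\subset T_\varepsilon$ is trivial. For the other direction, we need to show that if the loop around $z$ enters $B(0,\varepsilon)$, it touches $\R$ within $I(0,C\varepsilon)$ with conditional probability bounded below, and that it is unlikely to enter $B(0,\varepsilon)$ without touching $I(0,\varepsilon^{1-o(1)})$. This uses a boundary-arm estimate for $\kappa\in(4,8)$: a loop coming $\varepsilon$-close to the boundary touches it nearby with positive probability, since $\SLE_\kappa$ hits the boundary in this range. A two-scale argument combined with step two then gives $\P[T_\varepsilon(z)]=\varepsilon^{h+o(1)}$ and the same limiting law.

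The main obstacle is the first step, namely justifying that the exploration intensity is $\mu_0^\bub$ when loops are non-simple and touch each other and the boundary. The discovered set is then not a single simple bubble, and the domain-Markov decomposition must be done with care about which boundary-touching loops are revealed. My first attempt would be to use the continuum exploration tree of $\CLE_\kappa$ for $\kappa\in(4,8)$, namely branching $\SLE_\kappa(\kappa-6)$, to realize the exploration. Its excursions away from the root are chordal-$\SLE$-like, and this matches the limit definition \eqref{eq:bubble-def} directly.
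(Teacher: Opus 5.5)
Your top-level route is the paper's. The paper gives no argument of its own: it cites Sheffield--Werner and asserts that their conformal Markovian exploration uses only the domain Markov property and conformal invariance, so it carries over to $\kappa\in(4,8)$. Your Step~1 is at the same level of detail as that assertion, including the point you flag as the main obstacle (identifying the Poissonian intensity with $\mu_0^\bub$ for non-simple, mutually touching loops). Realizing the exploration through branching $\SLE_\kappa(\kappa-6)$ is a sensible way to make it concrete.

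The step that fails as written is Step~2.
\begin{itemize}
\item Every $\gamma$ under $\mu_0^\bub$ is rooted at $0$, so $\mu_0^\bub[\gamma\text{ surrounds } z,\ \gamma\cap I(0,\varepsilon)\neq\emptyset]=\mu_0^\bub[\gamma\text{ surrounds } z]$ does not depend on $\varepsilon$.
\item The normalization in \eqref{eq:bubble-def} concerns chordal $\SLE_\kappa$ from $0$ to $\varepsilon$, not the $\CLE_\kappa$ loop $\ell_z$.
\item Reading off $\P[\wh T_\varepsilon(z)]$ from the bubble measure presupposes that $\varepsilon^{-h}\mathbf{1}_{\wh T_\varepsilon(z)}$ times the law of $\ell_z$ converges to a multiple of $\mu_0^\bub$. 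That is the statement being proved, so the argument is circular.
\end{itemize}
The exponent must be produced independently of the identification. One option is to extract it from the exploration itself, as Sheffield--Werner must, since there the loops are not a priori $\SLE$-type. The other is your alternative: a boundary-hitting estimate for the $\SLE_\kappa$-type exploration branch that traces $\ell_z$. In that case you still need to transfer the estimate from ``the branch comes near $0$'' to ``the particular loop surrounding $z$ touches $I(0,\varepsilon)$'', for both the upper and lower bounds, and the proposal does not do this.

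Relatedly, the sharp asymptotic $c\,\varepsilon^h$ you claim is not available at this point. In the paper, existence of $\lim_{\varepsilon\to0}\varepsilon^{-h}\P[\wh T_\varepsilon]$ is Corollary~\ref{cor:sharp}. It is obtained only afterwards, inside the proof of Proposition~\ref{prop:touching-equals-bubble}, by matching the normalization $u(\varepsilon)^{-1}$ against the $\varepsilon^{-h}$ normalization of the boundary Minkowski content. That argument uses the present lemma, which supplies only $\varepsilon^{h+o(1)}$, as an input.
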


\subsection{Boundary-touching CLE loops and unrooted SLE bubble measure}\label{sec:unrooted}

We now relate the boundary-touching $\CLE_\kappa$ loops to the unrooted $\SLE_\kappa$ bubble measure. 
\begin{proposition}\label{prop:touching-equals-bubble}
Let $\Gamma$ be a $\CLE_\kappa$ configuration on $\hH$, and let $\cT(\Gamma)$ be the collection of loops in $\Gamma$ that touch $\R$. Denote $\rho$ to be the law of the outer boundary of the loop chosen from the counting measure, i.e.~$\rho(\mathcal{A})=\E\left[\sum_{\ell\in\cT(\Gamma)}{\bf 1}_{\ell\in\mathcal{A}}\right]$ for any measurable $\mathcal{A}$. Then there exists ${\frak C}\in(0,\infty)$ such that $\rho$ is equal to the unrooted $\SLE_\kappa$ bubble measure ${\frak C}\mu^\bub$ defined in~\eqref{eq:unrooted-def}.
\end{proposition}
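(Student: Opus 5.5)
We would prove the identity $\rho={\frak C}\mu^\bub$ by comparing rooted versions of both measures and using Lemma~\ref{lem:bubble-cle} together with Möbius invariance. Since $\mu^\bub$ is recovered from the rooted measures via \eqref{eq:rooted-unrooted-1}, it suffices to show that the rooted measure $\rho_x(d\ell):=\nu_{\ell\cap\R}(dx)\rho(d\ell)/dx$ equals ${\frak C}\mu_x^\bub$ for a.e.\ $x$. Both sides are translation invariant in $x$, so we may take $x=0$. Equivalently, it is enough to show that for every $z\in\hH$ and every bounded continuous functional $F$ of loops surrounding $z$,
\[
\E\Big[\sum_{\ell\in\cT(\Gamma)}{\bf 1}_{\ell \text{ surrounds } z}F(\ell)\,\nu_{\ell\cap\R}(I(0,\varepsilon))\Big]=(2\varepsilon+o(\varepsilon))\,{\frak C}\,\mu^\bub_0[F\,{\bf 1}_{\text{surrounds } z}].
\]
Loops surrounding $z$ for some $z$ cover all loops, so this identifies the measures.

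\textbf{Step 1 (shape of the rooted law).} We first show that the rooted law is proportional to $\mu_0^\bub$. Fix $z$ and let $\ell_z$ be the CLE loop surrounding $z$; at most one loop surrounds $z$. Conditioned on $\wh T_\varepsilon(z)$, Lemma~\ref{lem:bubble-cle} says the law of $\ell_z$ converges to $\mu_0^\bub(\cdot\mid\text{surround } z)$. We will upgrade this to weighting by $\nu_{\ell_z\cap\R}(I(0,\varepsilon))$ instead of the indicator of $\wh T_\varepsilon$.

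We write $\nu(I(0,\varepsilon))$ as $\varepsilon$ times an average of the density over $x\in I(0,\varepsilon)$. By translation invariance of CLE and the same lemma at each root $x$, the normalized conditional law of $\ell_z$, given that it hits near $x$ and weighted by its local Minkowski content, converges to $\mu_x^\bub(\cdot\mid\text{surround }z)$. The weight $\nu_{\ell\cap\R}(I(x,\varepsilon))/\varepsilon^{1-h}$, given $\ell\cap I(x,\varepsilon)\ne\emptyset$, has a conditional law of the local picture that is asymptotically independent of the macroscopic shape of $\ell$. This follows from the Markov property for SLE bubbles and chordal SLE near the root, i.e.\ from the decomposition after \eqref{eq:bubble-two-pinned}.

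Hence $\rho_0(\cdot\,{\bf 1}_{\text{surr }z})=c(z)\,\mu_0^\bub(\cdot\,{\bf 1}_{\text{surr }z})$ for some $c(z)\in[0,\infty)$. Consistency across different $z$ (two points surrounded by the same loop) forces $c(z)$ to be constant on overlaps, hence equal to a single constant ${\frak C}$. Finiteness and positivity of ${\frak C}$ come from the one-arm estimate $\P[\wh T_\varepsilon(z)]=\varepsilon^{h+o(1)}$ and from the first moment of boundary Minkowski content being of order $\varepsilon^{1-h}\cdot\varepsilon^{h}$.

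\textbf{Step 2 (alternative via invariance).} Alternatively, we note that $\rho$ is Möbius invariant, since $\CLE_\kappa$ on $\hH$ is. If we know $\rho_0=c\,\mu_0^\bub$ on loops surrounding some reference point, then Möbius invariance of both $\rho$ and $\mu^\bub$ (\cite[Theorem 4.13]{zhan-bubble}) gives the statement on all loops. Here we use that loops rooted at $0$ and surrounding $z$, for $z$ ranging in $\hH$, exhaust $\mu_0^\bub$-a.e.\ loop.

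\textbf{Main obstacle.} The hard step is the passage in Step~1 from conditioning on the event $\wh T_\varepsilon(z)$ to weighting by the boundary Minkowski content near $0$. This requires showing that the Minkowski-content weight near the root decouples from the global shape in the limit. We would first try a uniform integrability bound, namely a second-moment estimate $\E[\nu(I(0,\varepsilon))^2]\lesssim\varepsilon^{2-h}$ from boundary two-point estimates for SLE. We would combine this with a coupling: inside a small ball around the root, the CLE loop given $\wh T_\varepsilon$ resembles the loop sampled from $\mu_0^\bub$ up to a total-variation error that goes to $0$, via the exploration process and the local absolute continuity of CLE with chordal SLE.
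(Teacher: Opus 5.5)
Your overall architecture matches the paper's: root the measures by $\nu_{\ell\cap\R}(dx)$, identify the rooted law on loops surrounding a fixed $z$ with $\mu_x^\bub$ via Lemma~\ref{lem:bubble-cle}, glue over $z$ using that at most one loop surrounds $z$, and deweight. However, the one step that carries all the content is not proved. Lemma~\ref{lem:bubble-cle} controls the law of $\ell_z$ weighted by the \emph{indicator} ${\bf 1}_{\ell_z\cap I(0,\varepsilon)\neq\emptyset}$, but you need it weighted by $\nu_{\ell_z\cap\R}(I(0,\varepsilon))$. You bridge the two by asserting that $\nu_{\ell\cap\R}(I(x,\varepsilon))/\varepsilon^{1-h}$, given $\ell\cap I(x,\varepsilon)\neq\emptyset$, is asymptotically independent of the macroscopic shape of $\ell$. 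That is a genuine scale-separation statement about the joint law of the scale-$\varepsilon$ picture near the root and the global loop, and it is not contained in Lemma~\ref{lem:bubble-cle}. The decomposition after \eqref{eq:bubble-two-pinned} does not give it either, since it describes the two-pointed bubble, not a CLE loop conditioned on $\wh T_\varepsilon$. Your ``main obstacle'' paragraph only outlines how you would try to prove this, so as written the proof has a hole exactly there.

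The missing idea is that no decoupling is needed. Keep the root as a variable and integrate it against a test function, instead of weighting by $\nu(I(0,\varepsilon))$ at a single root. Your remark about averaging over $x$ points this way, but $\nu$ has no density; the right object is the indicator. Let $\Theta_z$ be the law of $\ell_z$ and $E_{z,x,\varepsilon}=\{\ell_z\cap I(x,\varepsilon)\neq\emptyset\}$. By Fubini, $\int{\bf 1}_{E_{z,x,\varepsilon}}f(x,\ell)\,dx=\int f(x,\ell){\bf 1}_{\dist(x,\ell\cap\R)<\varepsilon}\,dx$, so $\varepsilon^{-h}\int{\bf 1}_{E_{z,x,\varepsilon}}f(x,\ell)\,dx\to\int f(x,\ell)\,\nu_{\ell\cap\R}(dx)$ almost surely and in $L^2$ by \cite[Theorem 6.17]{zhan-boundary-gf}. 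Meanwhile, for each fixed $x$, Lemma~\ref{lem:bubble-cle} gives $u(\varepsilon)^{-1}{\bf 1}_{E_{z,x,\varepsilon}}\Theta_z\to C\,{\bf 1}_{\gamma\text{ surrounds }z}\,\mu_x^\bub$ with $u(\varepsilon)=\P[\wh T_\varepsilon(i)]$, uniformly for $x$ in compacts by conformal covariance. Hence $u(\varepsilon)^{-1}{\bf 1}_{E_{z,x,\varepsilon}}\Theta_z(d\ell)\,dx$ converges vaguely to $C\,{\bf 1}_{\gamma\text{ surrounds }z}\,\nu_{\gamma\cap\R}(dx)\mu^\bub(d\gamma)$ by \eqref{eq:rooted-unrooted-1}. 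On the other hand, its pairing with any $f$ equals $\varepsilon^h/u(\varepsilon)$ times a quantity converging to $\int f(x,\ell)\,\nu_{\ell\cap\R}(dx)\Theta_z(d\ell)$. This forces $\varepsilon^h/u(\varepsilon)$ to converge to some $C'\in(0,\infty)$, so no sharp one-arm asymptotics are needed. Deweighting by $|\nu_{\ell\cap\R}|$ then gives ${\bf 1}_{\gamma\text{ surrounds }z}\mu^\bub=(C'/C)\,{\bf 1}_{\ell\cap\R\neq\emptyset}\Theta_z$, and your gluing over $z$ finishes the proof.
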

\begin{proof}
Let $\ell_z\in\Gamma$ be the outermost loop with its outer boundary surrounding $z\in\hH$. We claim that $\ell_z$, restricted on that $\ell_z\cap\R\neq\emptyset$, has the same law as the unrooted  $\SLE_\kappa$ bubble measure $\mu^\bub$ restricted on surrounding $z$. The result then follows by varying $z$.

Denote the law of $\ell_z$ by $\Theta_z$. Let $u(\varepsilon)=\P[ \wh T_\varepsilon(i)]=\varepsilon^{h+o(1)}$ where $\wh T_\varepsilon(i)$ is defined in Lemma~\ref{lem:bubble-cle}. For $x\in\R$ and $\varepsilon>0$, let $E_{z,x,\varepsilon}$ be the event that $\ell_z$ intersects $I(x,\varepsilon)$. Now, consider the measure $\mathbbm{M}_\varepsilon:=u(\varepsilon)^{-1}{\bf 1}_{E_{z,x,\varepsilon}}\Theta_z(d\ell)dx$. Note that for fixed $x\in\R$, Lemma~\ref{lem:bubble-cle} implies that the measure $u(\varepsilon)^{-1}{\bf 1}_{E_{z,x,\varepsilon}}\Theta_z$ converges weakly to $C{\bf 1}_{F_{z,x}}\mu_x^\bub$ as $\varepsilon\to0$, where $F_{z,x}$ is the event that the bubble rooted at $x$ surrounds $z$ and $C\in(0,\infty)$ is some constant (throughout this proof, $C$ stands for some constant whose value can vary from line to line). Furthermore, by conformal covariance, the convergence is uniform on any bounded interval $U\subset\R$. Thus, we have the vague convergence
\begin{equation}\label{eq:m-eps-1}
    \mathbbm{M}_\varepsilon\to\mathbbm{M}:=C{\bf 1}_{F_{z,x}}\mu_x^\bub(d\gamma)dx=C{\bf 1}_{\gamma\text{ surrounds } z}\nu_{\gamma\cap\R}(dx)\mu^\bub(d\gamma)
\end{equation}
as $\varepsilon\to0$. Here the last equality is due to~\eqref{eq:rooted-unrooted-1}. On the other hand, note that for a.s.~$\ell$ sampled from $\Theta_z(d\ell)$ such that $\ell\cap\R\neq\emptyset$, by~\cite[Theorem 6.17]{zhan-boundary-gf}, $\varepsilon^{-h}{\bf 1}_{E_{z,x,\varepsilon}}dx$ weakly converges to the $(1-h)$-dimensional Minkowski content $\nu_{\ell\cap\R}$ of $\ell\cap \R$.
Furthermore, for any bounded interval $J\subset\R$, we have $\varepsilon^{-h}\int_J {\bf 1}_{E_{z,x,\varepsilon}}dx\to \nu_{\ell\cap\R}(J)$ for $\Theta_z$-a.s.~$\ell$ as well as in $L^2$. Therefore, for any compactly supported and continuous function $f$, we have
\[
\varepsilon^{-h}\int{\bf 1}_{E_{z,x,\varepsilon}}f(x,\ell)dx\Theta_z(d\ell)\to\int f(x,\ell)\nu_{\ell\cap\R}(dx)\Theta_z(d\ell).
\]
Since $\int f(x,\ell)\mathbbm{M}_\varepsilon(dx,d\ell)=
\frac{\varepsilon^h}{u(\varepsilon)}\varepsilon^{-h}\int{\bf 1}_{E_{z,x,\varepsilon}}f(x,\ell)dx\Theta_z(d\ell)$, which converges to $\int f(x,\ell)\mathbbm{M}(dx,d\ell)$ by~\eqref{eq:m-eps-1}, this implies the existence of the limit $\lim_{\varepsilon\to0}\frac{\varepsilon^h}{u(\varepsilon)}=C\in(0,\infty)$. Consequently,
\[
\int f(x,\ell)\mathbbm{M}(dx,d\ell)=C\int f(x,\ell)\nu_{\ell\cap\R}(dx)\Theta_z(d\ell),
\]
which gives the vague convergence
\begin{equation}\label{eq:m-eps-2}
\mathbbm{M}_\varepsilon\to C{\bf 1}_{\ell\cap\R\neq\emptyset}\nu_{\ell\cap\R}(dx)\Theta_z(d\ell)
\end{equation}
as $\varepsilon\to0$. Comparing~\eqref{eq:m-eps-1} and~\eqref{eq:m-eps-2}, by deweighting the total masses of the boundary Minkowski content measures, we obtain ${\bf 1}_{\gamma\text{ surrounds } z}\mu^\bub(d\gamma)=C{\bf 1}_{\ell\cap\R\neq\emptyset}\Theta_z(d\ell)$.

Now, since $\rho$ is obtained from the counting measure on boundary-touching loops and every $z\in\hH$ is surrounded by at most one CLE loop, we have ${\bf 1}_{\ell\text{ surrounds } z}\rho(d\ell)dz={\bf 1}_{\ell\cap\R\neq\emptyset}\Theta_z(d\ell)dz$, where $dz$ is the Lebesgue measure on $\hH$. Combined with the above result, we find ${\bf 1}_{\gamma\text{ surrounds } z}\mu^\bub(d\gamma)dz=C{\bf 1}_{\ell\text{ surrounds } z}\rho(d\ell)dz$. The result then follows by deweighting the Lebesgue areas of the regions surrounded by the bubble (resp.~loop) on both sides.
\end{proof}

According to Lemma~\ref{lem:mink}, the proof of Proposition~\ref{prop:touching-equals-bubble} also works when we consider $T_\varepsilon$ instead of $\wh T_\varepsilon$ (and consider the event that $\ell_z$ intersects $B(x,\varepsilon)$ in the definition of $E_{z,x,\varepsilon}$).
As a byproduct of the proof of Proposition~\ref{prop:touching-equals-bubble}, we record the following corollary.
\begin{corollary}\label{cor:sharp}
Let $T_\varepsilon$ and $\wh T_\varepsilon$ be as in Lemma~\ref{lem:bubble-cle}. Then both $\lim_{\varepsilon\to0}\varepsilon^{-h}\P[T_\varepsilon]$ and $\lim_{\varepsilon\to0}\varepsilon^{-h}\P[\wh T_\varepsilon]$ exist and are in $(0,\infty)$. In particular, the law of ${\bf 1}_{T_\varepsilon}\ell$ (or ${\bf 1}_{\wh T_\varepsilon}\ell$), times $\varepsilon^{-h}$, converges weakly to $C\mu_0^\bub$ restricted to surround $z$ for some $C\in(0,\infty)$.
\end{corollary}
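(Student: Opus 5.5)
The corollary is essentially a bookkeeping consequence of the proof of Proposition~\ref{prop:touching-equals-bubble}. I would rerun that argument with $z=i$, keeping track of which limits were actually established. For $\wh T_\varepsilon$, set $u(\varepsilon)=\P[\wh T_\varepsilon(i)]$. The proof derived two expressions for the vague limit of $\mathbbm{M}_\varepsilon=u(\varepsilon)^{-1}{\bf 1}_{E_{z,x,\varepsilon}}\Theta_z(d\ell)dx$. The first is \eqref{eq:m-eps-1}, coming from Lemma~\ref{lem:bubble-cle}. The second comes from the Minkowski content convergence of \cite[Theorem 6.17]{zhan-boundary-gf}, which holds almost surely and in $L^2$. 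Pairing both with a fixed nonnegative compactly supported continuous $f$ whose integral against $\mathbbm{M}$ is positive forces $\varepsilon^h/u(\varepsilon)$ to converge to a constant in $(0,\infty)$. I would state this step explicitly. Positivity of the limit pairing holds because $\mu^\bub$ charges bubbles surrounding $z$ with boundary content in the support of $f$. Finiteness holds because the $L^2$ bound makes the pairing with $\varepsilon^{-h}\int{\bf 1}_{E}f\,dx$ bounded. Hence $\lim_{\varepsilon\to0}\varepsilon^{-h}\P[\wh T_\varepsilon]=\lim \varepsilon^{-h}u(\varepsilon)\in(0,\infty)$.

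The weak convergence statement then follows immediately. Lemma~\ref{lem:bubble-cle} gives that the conditional law of $\ell$ given $\wh T_\varepsilon$ converges to $\mu_0^\bub$ conditioned to surround $z$. Multiplying this by $\varepsilon^{-h}\P[\wh T_\varepsilon]$, which converges to a positive finite constant, gives the weak convergence of $\varepsilon^{-h}{\bf 1}_{\wh T_\varepsilon}\ell$ to $C\mu_0^\bub|_{\text{surround } z}$. The normalization of $\mu_0^\bub$ restricted to surround $z$ is absorbed into $C$; its mass is finite and positive.

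For $T_\varepsilon$, the paper notes just before the statement that, by Lemma~\ref{lem:mink}, the same proof works with balls $B(x,\varepsilon)$ in place of intervals $I(x,\varepsilon)$. I take the content of Lemma~\ref{lem:mink} to be that $\varepsilon^{-h}{\rm Leb}\{x:\ell\cap B(x,\varepsilon)\neq\emptyset\}$ also converges to a constant multiple of $\nu_{\ell\cap\R}$, almost surely and in $L^2$. Substituting this into the second half of the argument and repeating it verbatim gives the same conclusion for $T_\varepsilon$, with a possibly different constant.

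The only step needing care is the existence (not merely boundedness) of $\lim\varepsilon^h/u(\varepsilon)$. Lemma~\ref{lem:bubble-cle} alone gives only $\varepsilon^{h+o(1)}$. The existence comes precisely from matching two vague limits that are independent of the subsequence. I would argue by subsequences: any subsequential limit $c$ of $\varepsilon^h/u(\varepsilon)$ satisfies $\int f\,d\mathbbm{M}=c\int f\,\nu\,\Theta_z$, and this pins down $c$ uniquely.
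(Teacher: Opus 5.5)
Your proposal follows the paper's route. The corollary is read off from the proof of Proposition~\ref{prop:touching-equals-bubble}: matching the two vague limits of $\mathbbm{M}_\varepsilon$ forces $\varepsilon^h/u(\varepsilon)$ to converge, the conditional-law convergence of Lemma~\ref{lem:bubble-cle} is then rescaled, and Lemma~\ref{lem:mink} handles $T_\varepsilon$.

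There is one bookkeeping slip in your justification. Positivity of $\int f\,d\mathbbm{M}$ and boundedness of the Minkowski pairing only give $\lim\varepsilon^{-h}\P[\wh T_\varepsilon]<\infty$. For the limit to be strictly positive you also need $\int f(x,\ell)\,\nu_{\ell\cap\R}(dx)\,\Theta_z(d\ell)>0$. This is the easy but separate input that $\ell_z$ carries positive expected boundary Minkowski content on the support of $f$.
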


\subsection{CLE boundary Green's functions}\label{sec:boundary-gf}

The aim of this section is to relate the $\CLE_\kappa$ boundary Green's functions defined in~\eqref{eq:four-type-gf} to the limit of boundary Green's function of chordal $\SLE_\kappa$; see Proposition~\ref{prop:gf-convergence-sec2}. Due to symmetry, it suffices to focus on $G^{(1234)}(x_1,x_2,x_3,x_4)$ and $G^{(12)(34)}(x_1,x_2,x_3,x_4)$. We refer readers to Appendix~\ref{appendix:gf} for further background of $\SLE_\kappa$ boundary Green's functions, including the various definitions appeared in the literature~\cite{lawler-mink-R,zhan-boundary-gf,fakhry2023} and their equivalence to the definition in this paper.

Based on Proposition~\ref{prop:touching-equals-bubble}, we can first express the $\CLE_\kappa$ boundary Green's functions using the $\SLE_\kappa$ bubble measure. Suppose $x,x_1,...,x_n\in\R$. Let $G^\bub_x(x_1,...,x_n)$ be the boundary Green's function of $\SLE_\kappa$ bubble rooted at $x$, defined by
\begin{equation}\label{eq:def-gf-bub}
    G^\bub_x(x_1,...,x_n)\prod_{i=1}^ndx_i=\mu^\bub_x\left[\prod_{i=1}^n\nu_{\gamma\cap\R}(dx_i)\right].
\end{equation}
Then $G^\bub_{x}(x_1,...,x_n)$ is finite and locally bounded; see Proposition~\ref{prop:gf-existence-bub}. Let $H_{\hH}(x_1,x_2)\propto\frac{1}{|x_1-x_2|^2}$ be the boundary Poisson kernel on $\hH$. For any simply connected domain $D$ and $a,b\in\partial D$, let $f$ be a conformal map from $D$ to $\hH$, and define $H_D(a,b)=|f'(a)||f'(b)|H_{\hH}(f(a),f(b))$ (when $\partial D$ is smooth near $a$ and $b$).
Proposition~\ref{prop:touching-equals-bubble} then implies the following
\begin{proposition}\label{prop:cle-gf-to-bubble}
Let ${\frak C}$ be the same constant as in Proposition~\ref{prop:touching-equals-bubble}. Then for any $n\ge2$, we have
\begin{equation}\label{eq:bub-1}
G^\bub_{x_1}(x_2,...,x_n)dx_1dx_2...dx_n={\frak C}\E\left[\sum_{\ell\in\cT(\Gamma)}\prod_{i=1}^n\nu_{\ell\cap\R}(dx_i)\right].
\end{equation}
When $n=2$, $G^\bub_{x_1}(x_2)={\frak C}H_{\hH}(x_1,x_2)^h$ (we choose the constant of $H_{\hH}(x_1,x_2)$ such that the coefficient is ${\frak C}$). In particular, for the $\CLE_\kappa$ boundary Green's functions defined in~\eqref{eq:four-type-gf}, we have
\begin{align}
G^{(1234)}(x_1,x_2,x_3,x_4)&= {\frak C}G^\bub_{x_1}(x_2,x_3,x_4)\in(0,\infty),\label{eq:bub-2}\\
G^{(12)(34)}(x_1,x_2,x_3,x_4)&={\frak C}\int H_{\cU(\gamma)}(x_3,x_4)^h\mu_{x_1,x_2}^\bub(d\gamma)\in(0,\infty)\label{eq:bub-3}
\end{align}
where $\mu_{x_1,x_2}^\bub$ is defined in~\eqref{eq:bubble-two-pinned}.
Recall that $\cU(A)$ denotes the unbounded connected component of $\hH\setminus A$ for a compact subset $A\subset\ol\hH$.
\end{proposition}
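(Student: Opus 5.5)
The plan is to transfer everything through Proposition~\ref{prop:touching-equals-bubble} together with the rooting identities \eqref{eq:rooted-unrooted-1} and \eqref{eq:rooted-unrooted-2}.

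\textbf{Step 1: identity \eqref{eq:bub-1}.} Take a nonnegative test function $f(x_1,\dots,x_n)$. By Proposition~\ref{prop:touching-equals-bubble},
\[
\E\Big[\sum_{\ell\in\cT(\Gamma)}\int f\prod_i\nu_{\ell\cap\R}(dx_i)\Big]=\int\!\int f\prod_i\nu_{\gamma\cap\R}(dx_i)\,{\frak C}\mu^\bub(d\gamma).
\]
This uses that $\nu_{\ell\cap\R}$ depends only on $\ell\cap\R$, and $\ell\cap\R$ coincides with the intersection of the outer boundary with $\R$. Next apply \eqref{eq:rooted-unrooted-1} to the factor $\nu_{\gamma\cap\R}(dx_1)\mu^\bub(d\gamma)$, which turns it into $\mu^\bub_{x_1}(d\gamma)dx_1$. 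Then use the definition \eqref{eq:def-gf-bub}. This gives \eqref{eq:bub-1}, with the constant placed on the appropriate side; I will fix the normalization so that it matches the statement.

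\textbf{Step 2: the case $n=2$.} By \eqref{eq:bubble-two-pinned}, $G^\bub_{x_1}(x_2)$ is the total mass of $\mu^\bub_{x_1,x_2}$, which equals $C|x_2-x_1|^{-2h}\propto H_\hH(x_1,x_2)^h$. Choosing the constant in $H_\hH$ appropriately gives the claim.

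\textbf{Step 3: identity \eqref{eq:bub-2}.} This is the case $n=4$ of Step 1.

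\textbf{Step 4: identity \eqref{eq:bub-3}.} This step is more delicate, because two distinct loops $\ell\neq\ell'$ appear. I would condition on the loop $\ell$ through $x_1,x_2$. By \eqref{eq:bub-1} with $n=2$, the law of $\ell$ weighted by $\nu(dx_1)\nu(dx_2)$ is ${\frak C}\mu^\bub_{x_1,x_2}(d\gamma)dx_1dx_2$. Given $\ell=\gamma$, the loops of $\Gamma$ in the component $\cU(\gamma)$ (the one whose boundary contains $x_3,x_4$) form an independent $\CLE_\kappa$ there. This is the domain Markov property, used the same way as in the exploration argument behind Lemma~\ref{lem:bubble-cle}. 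The loops inside bounded components cannot touch $\R$ near $x_3,x_4$ when $x_3,x_4$ lie outside the arc $[x_1,x_2]$. Conformal covariance of the two-point function (Step 2, transported by a map $\cU(\gamma)\to\hH$) then gives
\[
\E\Big[\sum_{\ell'\subset\cU(\gamma)}\nu_{\ell'}(dx_3)\nu_{\ell'}(dx_4)\Big]=H_{\cU(\gamma)}(x_3,x_4)^h\,dx_3dx_4,
\]
with the normalization fixed in Step 2. The boundary near $x_3,x_4$ is the real line, so $H_{\cU(\gamma)}$ makes sense. Integrating over $\gamma$ yields \eqref{eq:bub-3}.

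\textbf{Main obstacle: justifying the Markov decomposition in Step 4.} The conditioning is on a loop chosen by a measure that is Minkowski-weighted at two points. I would first condition on $\ell$ touching the intervals $I(x_1,\varepsilon)$ and $I(x_2,\varepsilon)$, where the conditioning is legitimate via an exploration stopping argument. I would then pass to the limit using Corollary~\ref{cor:sharp} and the $L^2$ convergence of Minkowski contents, exactly as in the proof of Proposition~\ref{prop:touching-equals-bubble}.

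\textbf{Positivity and finiteness.} Positivity is clear since the integrands are positive on a set of positive measure. Finiteness follows from Proposition~\ref{prop:gf-existence-bub} for \eqref{eq:bub-2}. For \eqref{eq:bub-3} it follows from the bound $H_{\cU(\gamma)}(x_3,x_4)\le H_\hH(x_3,x_4)$, by domain monotonicity of the Poisson kernel, together with the fact that $\mu^\bub_{x_1,x_2}$ has finite total mass.
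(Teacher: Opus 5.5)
Your route is the paper's: \eqref{eq:bub-1} from Proposition~\ref{prop:touching-equals-bubble} and \eqref{eq:rooted-unrooted-1}, \eqref{eq:bub-2} as the case $n=4$, the case $n=2$ by conformal covariance, and \eqref{eq:bub-3} from \eqref{eq:rooted-unrooted-2}, the domain Markov property of $\CLE_\kappa$ and the transported two-point function. There is, however, one genuine error, in Step 4. You claim that loops in bounded components of $\hH\setminus\gamma$ cannot touch $\R$ near $x_3,x_4$ when these lie outside $[x_1,x_2]$. This is false.

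Take $x_1<x_2<x_3<x_4$ and use the decomposition of $(\mu^\bub_{x_1,x_2})^\#$. The return arc $\eta'$ is a chordal $\SLE_\kappa$ from $x_2$ to $x_1$ in $\cU(\eta)$. Since $\kappa\in(4,8)$ it hits the boundary, and with positive probability it avoids $[x_3,x_4]$ and swallows both points at once by landing on $(x_4,\infty)$. Then $x_3,x_4$ lie on the boundary of a bounded pocket of $\hH\setminus\gamma$ that is not surrounded by $\gamma$. Given $\ell=\gamma$, the loops in that pocket form an independent $\CLE_\kappa$, which does produce loops $\ell'\neq\ell$ through both $x_3$ and $x_4$. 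In percolation terms, the cluster of $x_1,x_2$ arches over a separate cluster joining $x_3,x_4$. These configurations contribute to $G^{(12)(34)}$.

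For the admissible order $x_2<x_3<x_4<x_1$ the situation is worse. This is exactly the relabeling used for $G^{(14)(23)}$ in the proof of Theorem~\ref{thm:identify}, and there the two points are \emph{always} under the arch of $\gamma$. With the literal unbounded component, \eqref{eq:bub-3} would therefore give $0$.

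So $\cU(\gamma)$ in \eqref{eq:bub-3} must be read as the component of $\hH\setminus\gamma$ whose boundary contains $x_3$ and $x_4$, with $H:=0$ if there is none. This is what your parenthetical says, and it is what M\"obius covariance forces. Delete the false sentence and justify the step by noting that any $\ell'\neq\ell$ touching both $x_3$ and $x_4$ lies in that component, bounded or not. Domain monotonicity of the Poisson kernel still gives finiteness.

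Second, your constants do not close. You also cannot ``fix the normalization'' of ${\frak C}$, since it is pinned by Proposition~\ref{prop:touching-equals-bubble}.
\begin{itemize}
\item From $\rho={\frak C}\mu^\bub$, your Step 1 gives $\E[\sum_{\ell}\prod_i\nu_{\ell\cap\R}(dx_i)]={\frak C}\,G^\bub_{x_1}(x_2,\dots,x_n)\prod_i dx_i$. This is \eqref{eq:bub-1} with ${\frak C}$ on the other side; as printed, \eqref{eq:bub-1} and \eqref{eq:bub-2} together force ${\frak C}=1$.
\item With your Step 2 convention $G^\bub_{x_1}(x_2)={\frak C}H_\hH(x_1,x_2)^h$, the $\CLE_\kappa$ two-point function is ${\frak C}^2H_\hH^h$, not the $H_\hH^h$ you invoke in Step 4. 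You would end up with ${\frak C}^3$ in \eqref{eq:bub-3}.
\end{itemize}
To get \eqref{eq:bub-2} and \eqref{eq:bub-3} as printed, normalize $H_\hH$ so that the $\CLE_\kappa$ boundary two-point function is exactly $H_\hH^h$, as in Remark~\ref{rmk:poisson}. Equivalently, $G^\bub_{x_1}(x_2)={\frak C}^{-1}H_\hH(x_1,x_2)^h$. This slip is already present in the statement, so it is not a defect of your argument as such, but you should commit to one consistent convention.
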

\begin{proof}
~\eqref{eq:bub-1} is a direct consequence of Proposition~\ref{prop:touching-equals-bubble} and~\eqref{eq:def-gf-bub}, and~\eqref{eq:bub-2} is the special case of~\eqref{eq:bub-1} with $n=4$. $G^\bub_{x_1}(x_2)\propto H_{\hH}(x_1,x_2)^h$ follows from the conformal covariance, and we choose the constant of the boundary Poisson kernel such that $G^\bub_{x_1}(x_2)={\frak C}H_{\hH}(x_1,x_2)^h$. This further implies
$H_{\cU(\gamma)}(x_3,x_4)^h={\frak C}\E_{\cU(\gamma)}\left[\sum_{\ell\in\cT(\Gamma)}\prod_{i=3}^4\nu_{\ell\cap\R}(dx_i)\right]$ (here $\E_{\cU(\gamma)}$ stands for taking expectations with respect to the $\CLE_\kappa$ on the domain $\cU(\gamma)$).
Then by~\eqref{eq:four-type-gf}, Proposition~\ref{prop:touching-equals-bubble} and the domain Markov property of $\CLE_\kappa$, we obtain~\eqref{eq:bub-3}.
\end{proof}

We now express the right sides of~\eqref{eq:bub-2} and~\eqref{eq:bub-3} as limits of boundary Green's function of chordal $\SLE_\kappa$. To this end, we start by defining two types of boundary Green's functions $F^{(1)}$ and $F^{(2)}$ of chordal $\SLE_\kappa$.
Let $y_1,y_2\in\R\cup\{\infty\}$, and let $\eta$ be a chordal $\SLE_\kappa$ on $\hH$ from $y_1$ to $y_2$ (whose law is denoted by $\mu_{\hH,y_1, y_2}^\#(d\eta)$). Let $\nu_{\eta\cap\R}(dx)$ be the $(1-h)$-dimensional Minkowski content of $\eta\cap\R$. For $x_1,...,x_n\in\R$, define the boundary $n$-point Green's function $G_{\hH,y_1,y_2}(x_1,...,x_n)$ of $\eta$ as
\begin{equation}\label{eq:gf}
G_{\hH,y_1,y_2}(x_1,...,x_n)\prod_{i=1}^n dx_i=\int \prod_{i=1}^n\nu_{\eta\cap\R}(dx_i)\mu_{\hH,y_1, y_2}^\#(d\eta),
\end{equation}
where the integration is taken over $\mu_{\hH,y_1, y_2}^\#(d\eta)$. \eqref{eq:gf} is equivalent to the boundary Green's function considered in~\cite{lawler-mink-R,fakhry2023}; see Proposition~\ref{prop:equivalence}.
\begin{definition}\label{def:F}
Suppose $x_1<x_2<x_3$ and $y_1,y_2\in\R\cup\{\infty\}$. Define $F^{(1)}, F^{(2)}$ as follows.
\begin{itemize}
    \item Let $F^{(1)}(y_1,y_2,x_1,x_2,x_3):=G_{\hH,y_1,y_2}(x_1,x_2,x_3)$.
    \item Denote $\rho_{\hH,y_1,y_2,x_1}^\#(d\eta)$ to be the law of chordal $\SLE_\kappa(\kappa-8)$ on $\hH$ from $y_1$ to $y_2$ with force point at $x_1$, which can be viewed as a chordal $\SLE_\kappa$ from $y_1$ to $y_2$ and conditioned to hit $x_1$. Let
    \begin{equation}\label{eq:gf-2}
    F^{(2)}(y_1,y_2,x_1,x_2,x_3):=G_{\hH,y_1,y_2}(x_1)\int H_{\cU(\eta)}(x_2,x_3)^{h}\rho_{\hH,y_1,y_2,x_1}^\#(d\eta).
    \end{equation}
    Here $\cU(\eta)$ is the unbouned connected component of $\hH\setminus\eta$, and $H_{\cU(\eta)}$ stands for the boundary Poisson kernel on $\cU(\eta)$. When $x_2$ or $x_3$ is not in $\ol {\cU(\eta)}$, we set $H_{\cU(\eta)}(x_2,x_3):=0$.
\end{itemize}
\end{definition}

The following lemma gives basic properties of $F^{(1)}$ and $F^{(2)}$.
\begin{lemma}\label{lem:f-basic}
For $j=1,2$, the functions $F^{(j)}(y_1,y_2,x_1,x_2,x_3)$ are finite and locally bounded. Furthermore, for $\eta$ sampled from $\mu_{\hH,y_1, y_2}^\#$ and parameterized by its half-plane capacity, let $g_t:\hH\setminus\eta_t\to\hH$ be the corresponding Loewner map such that $g_t(\eta_t)=W_t$. Then
\begin{equation}\label{eq:martingale}
M_t^{(j)}(y_1,y_2,x_1,x_2,x_3):=\left(\prod_{i=1}^3|g_t'(x_i)|^h\right)\cdot F^{(j)}(W_t,g_t(y_2),g_t(x_1),g_t(x_2),g_t(x_3))
\end{equation}
is a continuous local martingale.
\end{lemma}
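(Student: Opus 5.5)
The plan has two parts: finiteness and local boundedness of $F^{(1)}$ and $F^{(2)}$, then the martingale property, which will follow from the domain Markov property of chordal $\SLE_\kappa$ together with the conformal covariance of the Minkowski content.

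\emph{Finiteness of $F^{(1)}$.} Since $F^{(1)}=G_{\hH,y_1,y_2}(x_1,x_2,x_3)$, finiteness and local boundedness follow from the known bounds on chordal SLE boundary Green's functions. By Proposition~\ref{prop:equivalence}, the definition~\eqref{eq:gf} agrees with those of~\cite{lawler-mink-R,fakhry2023}. In those works the $n$-point boundary Green's function is finite and bounded above by a product of one-point-type factors, of the form $\prod_i \min_{j<i}|x_i-x_j|^{-h}$ times factors depending on the distances to $y_1,y_2$. This gives local boundedness on $\{x_1<x_2<x_3\}$ away from the $y_j$.

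\emph{Finiteness of $F^{(2)}$.} Here I use $H_{\cU(\eta)}(x_2,x_3)\le H_{\hH}(x_2,x_3)$, which holds by domain monotonicity of the boundary Poisson kernel, since $\cU(\eta)\subset\hH$. Hence
\[
F^{(2)}\le G_{\hH,y_1,y_2}(x_1)\,H_{\hH}(x_2,x_3)^h,
\]
and $G_{\hH,y_1,y_2}(x_1)$ is explicit and locally bounded. Both functions are positive as well.

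\emph{Martingale property.} I parameterize $\eta$ by capacity, work under $\mu^\#_{\hH,y_1,y_2}$, and fix $t$ smaller than the hitting time of a neighborhood of $\{x_1,x_2,x_3,y_2\}$. For $F^{(1)}$, define $N:=\prod_i\nu_{\eta\cap\R}(dx_i)/dx_i$, interpreted in the integrated sense. Its conditional expectation given $\eta[0,t]$ equals $M_t^{(1)}$. The reasons are:
\begin{itemize}
\item the part of $\eta\cap\R$ near the $x_i$ after time $t$ is the image under $g_t^{-1}$ of $\tilde\eta\cap\R$, where $\tilde\eta:=g_t(\eta[t,\infty))$ is an independent chordal $\SLE_\kappa$ from $W_t$ to $g_t(y_2)$;
\item the Minkowski content transforms as $\nu_{\eta\cap\R}(dx)=|g_t'(x)|^{h}\,\nu_{\tilde\eta\cap\R}(d g_t(x))$ (the $(1-h)$-dimensional content pulls back with the factor $|g_t'|^{1-h}$ relative to Lebesgue measure), which accounts for the covariance factor $|g_t'(x_i)|^h$ per point;
\item no content accumulates near the $x_i$ before time $t$.
\end{itemize}
This identifies $M^{(1)}_t$ as a Doob martingale, after integrating against test functions in $x_i$ and applying Fubini.

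For $F^{(2)}$, I write it as
\[
F^{(2)}\,dx_1=\E\!\left[\nu_{\eta\cap\R}(dx_1)\,H_{\cU(\eta)}(x_2,x_3)^h\right],
\]
because $G(x_1)\,\rho^\#$ is the Doob transform of $\mu^\#$ weighted by $\nu(dx_1)$. I then apply the same Markov argument, using that $\cU(\eta)=g_t^{-1}(\cU(\tilde\eta))$ and that the Poisson kernel satisfies the same covariance: $H_{\cU(\eta)}(x_2,x_3)=|g_t'(x_2)||g_t'(x_3)|\,H_{\cU(\tilde\eta)}(g_t(x_2),g_t(x_3))$. After localization, this gives a true martingale up to stopping times, hence a local martingale overall.

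\emph{Continuity.} Continuity of $M^{(j)}_t$ in $t$ follows once continuity of $F^{(j)}$ is known. Alternatively, I would use continuity of Brownian filtration martingales, since every martingale in the Brownian filtration generated by $W$ has a continuous version.

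\emph{Main obstacle.} I expect the main difficulty to be the rigorous justification that the conditional expectation of the product of Minkowski-content measures factorizes through $g_t$. This needs the conformal covariance of boundary Minkowski content under $g_t$, which holds since $g_t$ is analytic across $\R$ near the $x_i$, together with uniform integrability to pass the $\varepsilon$-limits. I would handle this by working with the $L^2$ convergence from~\cite{zhan-boundary-gf}, testing against continuous compactly supported functions, and using the local bounds from the finiteness step for domination.
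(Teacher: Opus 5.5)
Your proposal is correct and follows essentially the same route as the paper. Finiteness of $F^{(1)}$ comes from the Fakhry--Zhan bounds via Proposition~\ref{prop:equivalence}, finiteness of $F^{(2)}$ from the monotonicity $H_{\cU(\eta)}(x_2,x_3)\le H_{\hH}(x_2,x_3)$, and the local martingale property from the domain Markov property of chordal $\SLE_\kappa$ plus conformal covariance; you simply spell out details the paper leaves implicit.

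One small notational slip: as an identity of measures one has $\nu_{\eta\cap\R}(dx)=|g_t'(x)|^{h-1}\,\nu_{\tilde\eta\cap\R}(g_t(dx))$. The factor $|g_t'(x)|^{h}$ appears only for densities with respect to $dx$, which is what your argument actually uses.
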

\begin{proof}
For $F^{(1)}$, its finiteness and local boundedness follows from~\cite[Theorem 1]{fakhry2023} (see Proposition~\ref{prop:equivalence} for the equivalence of~\eqref{eq:gf} and the Green's function defined in~\cite{fakhry2023}). Given this, the finiteness and local boundedness of $F^{(2)}$ readily follows from~\eqref{eq:gf-2} and the monotonicity of boundary Poisson kernel, i.e.~$H_{\cU_\eta}(x_2,x_3)\le H_{\hH}(x_2,x_3)$. The local martingale property is the direct consequence of the domain Markov property of the chordal $\SLE_\kappa$ and conformal covariance.
\end{proof}

The following proposition gives that $G^{(1234)}(x_1,x_2,x_3,x_4)$ and $G^{(12)(34)}(x_1,x_2,x_3,x_4)$ can be obtained as normalized limits of $F^{(1)}$ and $F^{(2)}$.
\begin{proposition}\label{prop:gf-convergence-sec2}
Let $u<x_1<x_2<x_3$. Then as $y_1,y_2\to u$, we have
\begin{align*}
{\frak C}\lim_{y_1,y_2\to u}|y_2-y_1|^{-h}F^{(1)}(y_1,y_2;x_1,x_2,x_3)&=G^{(1234)}(u,x_1,x_2,x_3)\\
{\frak C}\lim_{y_1,y_2\to u}|y_2-y_1|^{-h}F^{(2)}(y_1,y_2;x_1,x_2,x_3)&=G^{(12)(34)}(u,x_1,x_2,x_3).
\end{align*}
where ${\frak C}$ is the same constant in Propositions~\ref{prop:touching-equals-bubble} and~\ref{prop:cle-gf-to-bubble}.
\end{proposition}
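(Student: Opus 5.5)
The plan is to combine the weak convergence~\eqref{eq:bubble-def} of rescaled chordal $\SLE_\kappa$ to the bubble measure with Proposition~\ref{prop:cle-gf-to-bubble}. By translation invariance we may take $u=0$. Since $\mu_{\hH,y_1,y_2}^\#$ depends only on the pair, and Möbius covariance moves $(y_1,y_2)$ to $(u,u+\varepsilon)$ with $\varepsilon\asymp|y_2-y_1|$ at negligible cost (the factors $|\phi'(x_i)|^h\to1$ and $\phi(x_i)\to x_i$), it suffices to treat $y_1=u$, $y_2=u+\varepsilon$ with $\varepsilon\downarrow0$, provided the limit is continuous in the $x_i$. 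We must then show
\[
\varepsilon^{-h}G_{\hH,u,u+\varepsilon}(x_1,x_2,x_3)\to G^\bub_u(x_1,x_2,x_3).
\]
By~\eqref{eq:bub-2} (with the roles of $x_1$ and $u$ relabeled) this gives the first identity up to the constant ${\frak C}$.

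For the second identity, note that $G_{\hH,y_1,y_2}(x_1)\rho^\#_{\hH,y_1,y_2,x_1}(d\eta)\,dx_1=\nu_{\eta\cap\R}(dx_1)\mu^\#_{\hH,y_1,y_2}(d\eta)$. Hence
\[
F^{(2)}\,dx_1=\int H_{\cU(\eta)}(x_2,x_3)^h\,\nu_{\eta\cap\R}(dx_1)\,\mu^\#_{\hH,y_1,y_2}(d\eta).
\]
By~\eqref{eq:bubble-two-pinned}, the analogous bubble quantity $\int H_{\cU(\gamma)}(x_2,x_3)^h\mu^\bub_{u,x_1}(d\gamma)\,dx_1$ is exactly the right side of~\eqref{eq:bub-3}. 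So both statements reduce to one claim: for test functionals $\Phi(\eta)$ equal to $1$ or $H_{\cU(\eta)}(x_2,x_3)^h$, the measures $\varepsilon^{-h}\Phi(\eta)\prod_i\nu_{\eta\cap\R}(dx_i)\,\mu^\#_{\hH,u,u+\varepsilon}(d\eta)$ converge to their bubble counterparts, and the densities converge pointwise.

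I would prove this in two steps. First, integrated against smooth compactly supported test functions $f(x_1,x_2,x_3)$ on $\{u<x_1<x_2<x_3\}$, the convergence follows from weak convergence~\eqref{eq:bubble-def}. This uses a coupling in which the curves converge and the Minkowski contents $\nu_{\eta\cap\R}$ restricted to intervals away from $u$ converge; that can be arranged because, away from the root, the bubble and the chordal curve from $u$ to $u+\varepsilon$ are absolutely continuous after the curve has exited a small ball around $u$. The functional $H_{\cU(\eta)}(x_2,x_3)^h$ is bounded by $H_\hH(x_2,x_3)^h$ and is continuous in the Carathéodory sense for a.e. configuration. Uniform integrability comes from the second-moment bounds on the Green's functions (finiteness of $G$ with more points, as in Proposition~\ref{prop:gf-existence-bub} and~\cite{fakhry2023}), which uniformly control $\varepsilon^{-h}\mu^\#(\text{hit }B(x_i,\delta))$. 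Second, I would upgrade weak convergence of densities to pointwise convergence by equicontinuity. Apply the domain Markov property at the hitting time of a small circle around $u$ of radius $r\ll x_1-u$: after mapping out the hull, both the chordal and bubble Green's functions are integrals of the smooth kernel $F^{(j)}$ against the law of the configuration at that time, weighted by $|g'(x_i)|^h$. Lemma~\ref{lem:f-basic}'s local boundedness, together with the Koebe-type continuity of $g$ in $x_i$, then gives local uniform continuity in $(x_1,x_2,x_3)$.

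The main obstacle is this uniformity near the root. We need $\varepsilon^{-h}\mu^\#_{\hH,u,u+\varepsilon}$ of curves reaching distance $r$ from $u$ to converge to the bubble mass, and we need the conditional law at that stopping time to converge in total variation. This is where Corollary~\ref{cor:sharp}-type sharp asymptotics, or Zhan's construction of $\mu^\bub_u$ as an $\varepsilon^{-h}$-limit with exact one-point estimates, is required.
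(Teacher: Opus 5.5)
\textbf{There is a gap in the key step.} Several parts of your plan are fine: translating to $(u,u+\varepsilon)$, identifying the limit through~\eqref{eq:bub-2}, and your Palm rewriting of $F^{(2)}$ (used in place of the paper's weak convergence $\rho^\#_{\hH,y_1,y_2,x_1}\to(\mu^\bub_{u,x_1})^\#$). You also locate the difficulty correctly, near the root. The problem is the mechanism you rely on there, which is false.

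You ask for the law of $\eta[0,\tau_r]$ under $\varepsilon^{-h}\mu^\#_{\hH,u,u+\varepsilon}$ to converge \emph{in total variation} to its law under $\mu^\bub_u$. Your first step also rests on an absolute continuity between the two curves ``after exiting a small ball''. For every fixed $\varepsilon$, these two stopped laws are in fact mutually singular:
\begin{itemize}
\item At scales below $\varepsilon$ the chordal curve is locally a plain $\SLE_\kappa$. So $(g_t(u^+)-W_t)/\sqrt\kappa$ is a Bessel process of dimension $1+4/\kappa<2$, and for $\kappa\in(4,8)$ the curve touches $(u,\infty)$ at arbitrarily small times almost surely.
\item The bubble's germ is of $\SLE_\kappa(2)$ type. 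It is the limit of the chordal curve conditioned to travel far before the Bessel process $(g_t(u+\varepsilon)-W_t)/\sqrt\kappa$, of dimension $3-8/\kappa$, hits $0$. This is an $h$-transform to dimension $1+8/\kappa>2$, so the bubble does not touch $(u,\infty)$ at small times.
\end{itemize}
Hence the total variation distance stays equal to the sum of the masses. The continuations you want to couple also live in different random domains: absolute continuity holds only conditionally on the \emph{same} initial piece, and that cannot be arranged.

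\textbf{How to repair it.} Only weak convergence is needed, and the Markov decomposition in your second step already does the work.
\begin{itemize}
\item Conditionally on $\eta[0,\tau_r]$, $\varepsilon^{-h}G_{\hH,u,u+\varepsilon}(x_1,x_2,x_3)$ is the $\varepsilon^{-h}\mu^\#_{\hH,u,u+\varepsilon}$-integral, over curves reaching $\partial B(u,r)$, of $\prod_i|g_{\tau_r}'(x_i)|^hF^{(1)}(W_{\tau_r},g_{\tau_r}(u+\varepsilon),g_{\tau_r}(x_1),g_{\tau_r}(x_2),g_{\tau_r}(x_3))$.
\item $G^\bub_u(x_1,x_2,x_3)$ is the $\mu^\bub_u$-integral of the same expression with the image of the root in the second slot. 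The same holds for $F^{(2)}$.
\item The integrand is continuous in the stopped curve (in the Carathéodory sense). It is bounded once you have an estimate $F^{(j)}(W,V,X)\le C|W-V|^h$ for $X$ in a compact set away from $W,V$; this should follow from Möbius covariance and the Fakhry--Zhan upper bounds.
\end{itemize}
That same estimate is what your equicontinuity step is missing. $W_{\tau_r}$ and $g_{\tau_r}(u+\varepsilon)$ can be arbitrarily close, and in that region Lemma~\ref{lem:smooth} gives no uniform control, while the local boundedness from Lemma~\ref{lem:f-basic} gives no modulus of continuity. With the estimate in hand, weak convergence of the stopped curves gives locally uniform convergence of the densities directly, and your test-function step becomes unnecessary.

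The paper's Proposition~\ref{prop:gf-convergence} uses this idea with different quantities. It works with ball-hitting quantities $\prod_i r_i^{-h}\,\mu^\#_{\hH,y_1,y_2}[\eta\cap B(x_i,r_i)\neq\emptyset\ \forall i]$, which pass to the limit under Hausdorff weak convergence. It then exchanges $r_i\to0$ with $y_1,y_2\to u$, using uniformity obtained from the same decomposition at the exit time of a small ball around the root.
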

\begin{proof}
By Proposition~\ref{prop:gf-convergence}, we have $|y_2-y_1|^{-h}G_{\hH,y_1,y_2}(x_1,...,x_n)\to G_u^\bub(x_1,...,x_n)$. Combined with~\eqref{eq:bub-2} in Proposition~\ref{prop:cle-gf-to-bubble} and taking $n=3$ gives the first equation. For the second equation, note that as $y_1,y_2\to u$, the weak limit of $\rho_{\hH,y_1,y_2,x_1}^\#$ in~\eqref{eq:gf-2} is the $\SLE_\kappa$ bubble measure $\mu_{u,x_1}^\bub$ rooted at $u$ and $x_1$, normalized to be a probability measure $(\mu_{u,x_1}^\bub)^\#$. Thus, we have
\begin{align*}
{\frak C}\lim_{y_1,y_2\to u}|y_2-y_1|^{-h}F^{(2)}(y_1,y_2,x_1,x_2,x_3)&={\frak C}G_u^\bub(x_1)\int H_{\cU(\gamma)}(x_2,x_3)^{h}(\mu_{u,x_1}^\bub)^\#(d\gamma)\\
&={\frak C}\int H_{\cU(\gamma)}(x_2,x_3)^{h}\mu_{u,x_1}^\bub(d\gamma),
\end{align*}
which equals $G^{(12)(34)}(u,x_1,x_2,x_3)$ due to~\eqref{eq:bub-3} in Proposition~\ref{prop:cle-gf-to-bubble}.
\end{proof}

\subsection{Smoothness and second-order PDEs}\label{sec:smoothness}

Based on the local martingale property~\eqref{eq:martingale} in Lemma~\ref{lem:f-basic}, we are able to derive a second-order PDE satisfied by $F^{(j)}$ for $j=1,2$. However, in order to apply Ito's formula, we need to \emph{a priori} know the smoothness of $F^{(j)}$. Here we use H\"ormander's hypoellipticity to obtain the smoothness, which is inspired by the proof of~\cite[Theorem 6]{dub15localization} and~\cite[Remark 4.3]{fakhry2023}.
\begin{lemma}\label{lem:smooth}
For $j=1,2$, the functions $F^{(j)}(y_1,y_2,x_1,x_2,x_3)$ in Definition~\ref{def:F} are smooth on $\{(y_1,y_2,x_1,x_2,x_3)\in\R^5:x_1<x_2<x_3 \text{ and }y_1,y_2<x_1\}$.
\end{lemma}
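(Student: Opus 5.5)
We follow Dubédat's hypoellipticity approach \cite{dub15localization}. First we show that $F^{(j)}$ is a distributional solution of a second-order operator $\mathcal{L}$ that satisfies Hörmander's bracket condition. Hypoellipticity then upgrades the local boundedness from Lemma~\ref{lem:f-basic} to smoothness.

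\textbf{Step 1: the operator.} Write the Loewner flow for the marked points: $dW_t=\sqrt{\kappa}dB_t$, and $dg_t(v)=\frac{2}{g_t(v)-W_t}dt$ for $v\in\{y_2,x_1,x_2,x_3\}$. The derivatives satisfy $d\log g_t'(x_i)=-\frac{2}{(g_t(x_i)-W_t)^2}dt$. Set
\[
\mathcal{L}=\frac{\kappa}{2}\partial_{y_1}^2+\sum_{v\in\{y_2,x_1,x_2,x_3\}}\frac{2}{v-y_1}\partial_v-\sum_{i=1}^3\frac{2h}{(x_i-y_1)^2}.
\]
Formally, $M^{(j)}_t$ is a local martingale exactly when $\mathcal{L}F^{(j)}=0$. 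To make this rigorous without knowing smoothness, we run the process from $(y_1,y_2,x_1,x_2,x_3)$ up to the exit time $\tau$ of a small relatively compact box $O$ inside the domain. The local martingale property, together with local boundedness (which makes the stopped martingale bounded), gives
\[
F^{(j)}(\mathbf{z})=\E_{\mathbf{z}}\Big[\prod_i|g_\tau'(x_i)|^h\,F^{(j)}(\mathbf{Z}_\tau)\Big].
\]
So $F^{(j)}$ is a bounded solution of the Feynman--Kac representation for $\mathcal{L}$ in $O$. Pairing this identity with test functions $\varphi$, and using Itô's formula on $\varphi$ (which is smooth), yields $\langle F^{(j)},\mathcal{L}^*\varphi\rangle=0$. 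Hence $\mathcal{L}F^{(j)}=0$ in $\mathcal{D}'$. Concretely, one shows that $\frac{1}{t}\big(\E[\text{weighted }F(\mathbf Z_t)]-F\big)$ vanishes for every $t$ and transfers the generator onto $\varphi$ via the adjoint semigroup. This is the standard argument in \cite[Theorem 6]{dub15localization}.

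\textbf{Step 2: Hörmander's condition.} Write $\mathcal{L}=\frac12 X_0'^2+X_1+c$, where $X_0=\sqrt{\kappa}\,\partial_{y_1}$, $X_1=\sum_v\frac{2}{v-y_1}\partial_v$, and $c$ is the smooth zeroth-order term, which does not affect hypoellipticity. We need the Lie algebra generated by $X_0$ and $X_1$ to span $\R^5$ at every point. The iterated brackets are
\[
\mathrm{ad}_{X_0}^k X_1=\kappa^{k/2}\,k!\sum_v\frac{2}{(v-y_1)^{k+1}}\partial_v,\qquad k=0,1,2,3.
\]
Together with $\partial_{y_1}$, these give a Vandermonde-type matrix in the variables $a_v=(v-y_1)^{-1}$. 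This matrix is nondegenerate as long as the four values $a_v$ are distinct and nonzero. The points $y_2,x_1,x_2,x_3$ are distinct from each other and from $y_1$, so this holds. One caveat: on the hyperplane $y_1=y_2$ the generator is singular, so we work on $\{y_1\ne y_2\}$ and also include the case $y_2=\infty$ by the same computation with one fewer variable. Hörmander's theorem then gives that $F^{(j)}$ is $C^\infty$ on the open set where the coefficients are smooth.

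\textbf{Main obstacle.} The delicate point is Step 1. We must justify that $F^{(j)}$ is a distributional solution using only local boundedness. This requires some regularity of the Feynman--Kac identity under perturbation of the starting point, or a duality argument. We would first try the approach of \cite{dub15localization}: average the martingale identity against a test function in the starting variables. Then use the Markov property at small times $t$ and Taylor-expand the smooth test function, so that all derivatives fall on $\varphi$. Boundedness of $F^{(j)}$ and of $g_t'$ on short time scales controls the error terms.

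For $F^{(2)}$ the same argument applies verbatim, because its martingale property in Lemma~\ref{lem:f-basic} has exactly the same form. Finally, the locus $y_1=y_2$ (if it lies in the stated domain) needs a separate check. Near $y_1=y_2$ we instead use the reversibility of chordal SLE to swap the roles of $y_1$ and $y_2$. Alternatively, we could apply the second PDE in $y_2$, whose bracket computation is symmetric. Either way the combined system is hypoelliptic there.
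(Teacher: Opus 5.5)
Your route (show $F^{(j)}$ is a distributional solution, then apply Hörmander's hypoellipticity theorem in all five variables) differs from the paper's and is viable. Two concrete points need fixing first.

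\textbf{The generator is wrong.} With $y_2$ finite, the capacity-parametrized chordal $\SLE_\kappa$ from $y_1$ to $y_2$ is an $\SLE_\kappa(\kappa-6)$ with force point $y_2$. So its driving function satisfies $dW_t=\sqrt\kappa\,dB_t+\frac{\kappa-6}{W_t-g_t(y_2)}dt$, not $dW_t=\sqrt\kappa\,dB_t$. Your $\mathcal{L}$ is therefore missing the term $\frac{\kappa-6}{y_1-y_2}\partial_{y_1}$; compare the BPZ equation in the proof of Proposition~\ref{prop:pde-u-v}. As written, $M^{(j)}_t$ is not a local martingale for your diffusion unless $\kappa=6$, so Step 1 would establish $\mathcal{L}F^{(j)}=0$ for the wrong operator.

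The repair is harmless. The extra drift points along $\partial_{y_1}$, so $\mathrm{ad}^k_{X_0}$ of the corrected drift differs from your brackets only by multiples of $\partial_{y_1}$. Since $X_0$ already spans that direction, your Vandermonde argument still gives rank $5$. Alternatively, do what the paper does: use M\"obius covariance to send $(y_1,y_2)$ to $(0,\infty)$. There $dW_t=\sqrt\kappa\,dB_t$ is correct, only the three variables $x_i$ remain, and brackets up to order two suffice.

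\textbf{Your last paragraph about the diagonal is wrong.} $F^{(j)}$ is not defined at $y_1=y_2$, and it is not smooth there in any extended sense. Section~\ref{sec:fusion} shows $F^{(j)}=v^h g^{(j)}$ with $g^{(j)}$ smooth and $g^{(j)}|_{v=0}\propto G^{(1234)}$ or $G^{(12)(34)}$, which is positive, while $h\in(0,1)$. Both BPZ operators have coefficients blowing up like $1/(y_2-y_1)$, and reversibility does not remove this singularity. The lemma should be read on $\{y_1\neq y_2\}$; the paper's M\"obius reduction uses this implicitly. Behaviour at the diagonal is exactly what Lemma~\ref{lem:dub} handles, and only after dividing by $v^h$.

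\textbf{Comparison of routes.} The paper never shows that $F^{(j)}$ is a weak solution. It takes the smooth solution $N=\mathcal{P}(F|_{\partial U})$ of the Dirichlet problem on a box, supplied by Hörmander's theory. It then identifies $N$ with $F$ via optional stopping and Dynkin's formula, since both equal $\E[\prod_i|g_\tau'(x_i)|^h F(X_\tau)]$. This bypasses the step you flag as the main obstacle. The price is needing solvability of a Dirichlet problem for a degenerate operator, with boundary values attained at the exit points.

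Your approach needs only the classical interior hypoellipticity theorem. However, $\langle F,\mathcal{L}^*\varphi\rangle=0$ must actually be carried out, for instance by:
\begin{itemize}
\item using that the probability of exiting the box before time $t$, started in $\mathrm{supp}\,\varphi$, is $o(t)$;
\item using that the forward evolution of a smooth compactly supported $\varphi$ satisfies $(P_t^*\varphi-\varphi)/t\to\mathcal{L}^*\varphi$ in $L^1$, which is what lets you pair against a merely bounded $F$.
\end{itemize}
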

\begin{proof}
Let $F^{(j)}(x_1,x_2,x_3)=F^{(j)}(0,\infty,x_1,x_2,x_3)$ for simplicity. By conformal covariance, it suffices to show the smoothness of $F^{(j)}(x_1,x_2,x_3)$ on $\{(x_1,x_2,x_3)\in\R_+^3:x_1<x_2<x_3\}$. For $1\le i\le3$, let $I_i$ be an open interval containing $x_i$, and $U=I_1\times I_2\times I_3\subset\R_+^3$. Define a second-order differential operator $\cL$ on $C^\infty(U)\cap C(\ol U)$ by
\begin{equation}\label{eq:generator}
\cL=\frac{\kappa}{2}{\bf X}^2+2{\bf Y}-2h \sum_{i=1}^3\frac{1}{x_i^2},\quad {\bf X}=\sum_{i=1}^3\partial_{x_i},\quad {\bf Y}=\sum_{i=1}^3\frac{1}{x_i}\partial_{x_i}.
\end{equation}
Note that the Lie brackets $[{\bf X},{\bf Y}]=-\sum_{i=1}^3\frac{1}{x_i^2}\partial_{x_i}$ and $[{\bf X},[{\bf X},{\bf Y}]]=2\sum_{i=1}^3\frac{1}{x_i^3}\partial_{x_i}$. Since $x_i$'s are mutually different on $U$, the Lie algebra generated by ${\bf X}$ and ${\bf Y}$ has rank $3$ on $U$. Thus, $\cL$ satisfies the H\"ormander’s condition\footnote{i.e.~for two vector fields ${\bf X}$ and ${\bf Y}$, ${\bf X},{\bf Y}$ and their iterated Lie brackets $[{\bf X},{\bf Y}], [{\bf X},[{\bf X},{\bf Y}]]$ etc.~span the whole tangent space at every point.}. By~\cite[Theorem 5.2]{hormander}, we can then define a Poisson operator $\cP:C(\partial U)\to C^\infty(U)\cap C(\ol U)$ for $\cL$ such that for any $\omega\in C(\partial U)$, $(\cP\omega)|_{\partial U}=\omega$ and $\cL(\cP\omega)=0$.

Let $\varphi_t$ be the Loewner map with driving function $W_t=\sqrt{\kappa}B_t$. By~\eqref{eq:martingale}, we know $M_t^{(j)}(x_1,x_2,x_3):=\prod_{i=1}^3|\varphi'(x_i)|^h F^{(j)}(x_1,x_2,x_3)$ is a continuous local martingale. For each $t\ge0$, let $X_t:=(g_t(x_1)-W_t,g_t(x_2)-W_t,g_t(x_3)-W_t)$, and let $\tau$ be the first hitting time of $\partial U$ for $(X_t)$.
By optional stopping theorem, $M_0^{(j)}(X_0)=\E[M_\tau^{(j)}(X_0)]$.
On the other hand, note that $\cL$ defined in~\eqref{eq:generator} is the infinitesimal generator of $(X_t)$ with the killing rate $\frac{2h}{X_t^2}$.
Let $N^{(j)}:=\cP (M_0^{(j)}|_{\partial U})\in C^\infty(U)$ (thus $\cL N^{(j)}=0$). Applying Dynkin's formula to $(X_t)$ yields
\[
N^{(j)}(X_0)=\E\left[e^{-\int_0^\tau\frac{2h}{X_t^2}dt}N^{(j)}(X_\tau)\right]=\E[|\varphi_\tau'(X_0)|^h N^{(j)}(X_\tau)].
\]
Since $X_\tau\in\partial U$, we also have
\[
\E[|\varphi_\tau'(X_0)|^h N^{(j)}(X_\tau)]=\E[|\varphi_\tau'(X_0)|^h M_0^{(j)}(X_\tau)]=\E[M_\tau^{(j)}(X_0)]=M_0^{(j)}(X_0)=F^{(j)}(X_0).
\]
Therefore, $F^{(j)}=N_0^{(j)}\in C^\infty(U)$. The result then follows from varying $U$.
\end{proof}

Now we are able to derive the second-order PDEs satisfied by $F^{(j)}$.

\begin{proposition}\label{prop:pde-u-v}
Let $u=\frac{1}{2}(y_1+y_2)$ and $v=\frac{1}{2}(y_2-y_1)$. Then for $j=1,2$, $F^{(j)}(y_1,y_2,x_1,x_2,x_3)$ satisfies the following pair of second-order PDEs:
\begin{equation}\label{eq:pde}
\begin{aligned}
\left(\frac{\kappa}{4}(\partial_{uu}+\partial_{vv})+\frac{1+\frac{\kappa-6}{2}}{v}\partial_v+\sum_{i=1}^3\left(\frac{4(x_i-u)\partial_{x_i}}{(x_i-u)^2-v^2}-\frac{2h}{(x_i-u+v)^2}-\frac{2h}{(x_i-u-v)^2}\right)\right)F=0,\\
\left(\frac{\kappa}{2}\partial_{uv}-\frac{1-\frac{\kappa-6}{2}}{v}\partial_u+\sum_{i=1}^3\left(\frac{4v\partial_{x_i}}{(x_i-u)^2-v^2}+\frac{2h}{(x_i-u+v)^2}-\frac{2h}{(x_i-u-v)^2}\right)\right)F=0.
\end{aligned}
\end{equation}
\end{proposition}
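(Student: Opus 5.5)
The approach is to use the local martingale property of Lemma~\ref{lem:f-basic} twice, once growing the curve from each endpoint, and then apply It\^o's formula. The smoothness from Lemma~\ref{lem:smooth} justifies It\^o's formula. This gives two first-order-in-time drift conditions, i.e.\ two second-order PDEs in the variables $(y_1,y_2,x_1,x_2,x_3)$. Changing variables to $u=\frac12(y_1+y_2)$, $v=\frac12(y_2-y_1)$ and taking suitable linear combinations then produces~\eqref{eq:pde}.

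\textbf{Step 1 (growth from $y_1$).} Parameterize the chordal $\SLE_\kappa$ from $y_1$ to $y_2$ by half-plane capacity. Its driving function satisfies
\[
dW_t=\sqrt\kappa\, dB_t+\frac{\kappa-6}{W_t-g_t(y_2)}\,dt.
\]
The other marked points evolve by $dg_t(z)=\frac{2}{g_t(z)-W_t}dt$ for $z\in\{y_2,x_1,x_2,x_3\}$, and the derivatives by $d\log g_t'(x_i)=-\frac{2}{(g_t(x_i)-W_t)^2}dt$. Expanding $M^{(j)}_t$ with It\^o's formula and setting the drift at $t=0$ to zero gives
\[
\Big(\tfrac\kappa2\partial_{y_1}^2+\tfrac{\kappa-6}{y_1-y_2}\partial_{y_1}+\tfrac{2}{y_2-y_1}\partial_{y_2}+\sum_{i=1}^3\Big(\tfrac{2}{x_i-y_1}\partial_{x_i}-\tfrac{2h}{(x_i-y_1)^2}\Big)\Big)F^{(j)}=0 .
\]
For $F^{(1)}$ this is immediate from~\eqref{eq:martingale}. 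For $F^{(2)}$ the martingale in~\eqref{eq:martingale} holds by the domain Markov property: the factor $G_{\hH,y_1,y_2}(x_1)$ together with $\rho^\#$ is just the restriction of $\mu^\#$ weighted by $\nu_{\eta\cap\R}(dx_1)$. The same computation therefore applies on the event that $x_1$ has not yet been swallowed, which holds near $t=0$ since $y_1<x_1$.

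\textbf{Step 2 (growth from $y_2$).} By reversibility of chordal $\SLE_\kappa$ for $\kappa\in(4,8)$, the law $\mu^\#_{\hH,y_1,y_2}$ equals the time reversal of $\mu^\#_{\hH,y_2,y_1}$. The Minkowski content measures are unaffected by reversal, so $F^{(1)}$ is symmetric in $(y_1,y_2)$. Likewise, $\rho^\#_{\hH,y_1,y_2,x_1}$ is the reversal of $\rho^\#_{\hH,y_2,y_1,x_1}$, since both are $\mu^\#$ weighted by $\nu(dx_1)$, and $\cU(\eta)$ does not depend on the orientation; hence $F^{(2)}$ is symmetric as well. Consequently the analogue of~\eqref{eq:martingale} with the curve grown from $y_2$ is also a local martingale, and we obtain the same PDE with $y_1\leftrightarrow y_2$ exchanged.

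\textbf{Step 3 (change of variables).} Substitute $\partial_{y_1}=\frac12(\partial_u-\partial_v)$ and $\partial_{y_2}=\frac12(\partial_u+\partial_v)$, with $y_1=u-v$ and $y_2=u+v$. Adding the two PDEs produces the symmetric combination $\frac\kappa4(\partial_{uu}+\partial_{vv})$. The $x_i$ terms then combine as
\[
\frac{2}{x_i-u+v}+\frac{2}{x_i-u-v}=\frac{4(x_i-u)}{(x_i-u)^2-v^2}.
\]
Subtracting the two PDEs produces the $\partial_{uv}$ term, with
\[
\frac{2}{x_i-u-v}-\frac{2}{x_i-u+v}=\frac{4v}{(x_i-u)^2-v^2}.
\]
The first-order $\partial_u,\partial_v$ coefficients have to be tracked, since they come from both the $\frac{\kappa-6}{y_1-y_2}$ drift and the $\frac{2}{y_2-y_1}$ motion of the other endpoint. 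This bookkeeping, together with the overall normalization (an overall factor $2$ versus $1$), must be matched to the stated coefficients $\frac{1+\frac{\kappa-6}{2}}{v}$ and $-\frac{1-\frac{\kappa-6}{2}}{v}$.

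\textbf{Main obstacle.} I expect the main obstacle to be justifying the second martingale (Step 2), particularly for $F^{(2)}$. Its definition involves conditioning on hitting $x_1$ and the unbounded component of the complement of the curve, so the reversibility argument must be checked carefully to show it is orientation-independent. The rest is a routine but error-prone algebra check.
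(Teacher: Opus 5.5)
Your proposal is correct and follows the paper's proof. The paper applies It\^o's formula to the local martingale of Lemma~\ref{lem:f-basic} (justified by Lemma~\ref{lem:smooth}), with $\eta$ viewed as $\SLE_\kappa(\kappa-6)$ from $y_1$ with force point $y_2$. It then uses reversibility to get the mirrored BPZ equation at $y_2$, and adds and subtracts the two equations in the $(u,v)$ coordinates. The bookkeeping you worried about closes with no extra normalization. The sum of the two equations is exactly the first line of~\eqref{eq:pde}, and the $y_2$-equation minus the $y_1$-equation is exactly the second line. The first-order terms combine to $\frac{1}{v}(1+\frac{\kappa-6}{2})\partial_v$ and $-\frac{1}{v}(1-\frac{\kappa-6}{2})\partial_u$ respectively.
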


\begin{proof}
Let $\eta$ be parameterized by its half-plane capacity, and let $g_t:\hH\setminus\eta_t\to\hH$ be the corresponding Loewner map. According to SLE coordinate change~\cite{sw-coord}, $(\eta_t)$ can be viewed as a chordal $\SLE_\kappa(\kappa-6)$ on $\hH$ from $y_1$ to $\infty$, with force point at $y_2$. Thus, we have
\begin{align*}
\partial_t g_t(z)=\frac{2}{g_t(z)-W_t},\quad dW_t=\sqrt{\kappa}dB_t-\frac{\kappa-6}{g_t(y_2)-W_t}dt.
\end{align*}
By Lemma~\ref{lem:f-basic}, $M_t^{(j)}:=\left(\prod_{i=1}^3|g_t'(x_i)|^h\right)\cdot F^{(j)}(W_t,g_t(y_2),g_t(x_1),g_t(x_2),g_t(x_3))$ is a local martingale. By Lemma~\ref{lem:smooth}, applying Ito's formula to $(M_t^{(j)})$ gives
\begin{align*}
\left(-\sum_{i=1}^3\frac{2h}{(x_i-y_1)^2}-\frac{\kappa-6}{y_2-y_1}\partial_{y_1}+\frac{2}{y_2-y_1}\partial_{y_2}+\sum_{i=1}^3\frac{2}{x_i-y_1}\partial_{x_i}+\frac{\kappa}{2}\partial_{y_1}^2\right)F^{(j)}=0.
\end{align*}
This is also known as the second-order BPZ equation at $y_1$.
Let $u=\frac{1}{2}(y_1+y_2)$ and $v=\frac{1}{2}(y_2-y_1)$. Then equivalently,
\begin{align}\label{eq:pde-plus}
\left(-\sum_{i=1}^3\frac{2h}{(x_i-u+v)^2}+\frac{1}{2v}\left((\partial_u+\partial_v)-\frac{\kappa-6}{2}(\partial_u-\partial_v)\right)+\sum_{i=1}^3\frac{2}{x_i-u+v}\partial_{x_i}+\frac{\kappa}{8}(\partial_u-\partial_v)^2\right)F^{(j)}=0.
\end{align}
By reversibility, we can also view $\eta$ as a chordal $\SLE_\kappa$ from $y_2$ to $y_1$ (hence is equivalent to a chordal $\SLE_\kappa(\kappa-6)$ from $y_2$ to $\infty$ with force point at $y_1$). Then similarly, ~\eqref{eq:pde-plus} also holds with $v$ replaced by $-v$. Namely, we have
\begin{align}\label{eq:pde-minus}
\left(-\sum_{i=1}^3\frac{2h}{(x_i-u-v)^2}-\frac{1}{2v}\left((\partial_u-\partial_v)-\frac{\kappa-6}{2}(\partial_u+\partial_v)\right)+\sum_{i=1}^3\frac{2}{x_i-u-v}\partial_{x_i}+\frac{\kappa}{8}(\partial_u+\partial_v)^2\right)F^{(j)}=0.
\end{align}
Combining~\eqref{eq:pde-plus} and~\eqref{eq:pde-minus} gives~\eqref{eq:pde}.
\end{proof}

\section{Fusion}\label{sec:fusion}
For $j=1,2$, recall that Proposition~\ref{prop:gf-convergence-sec2} establishes the convergence of $v^{-h} F^{(j)}(y_1,y_2,x_1,x_2,x_3)$ to $G^{(1234)}(u,x_1,x_2,x_3)$ (or $G^{(12)(34)}(u,x_1,x_2,x_3)$), as $v:=\frac12(y_2-y_1)\to0$ with $u:=\frac12(y_1+y_2)$ fixed. Moreover, Proposition~\ref{prop:pde-u-v} gives the two second-order PDEs satisfied by $F^{(j)}(y_1,y_2,x_1,x_2,x_3)$. 
Following the framework of~\cite{dubedat2015}, these ingredients will together imply that the limiting function $G^{(1234)}(u, x_1, x_2, x_3)$ (or $G^{(12)(34)}(u, x_1, x_2, x_3)$) satisfies a third-order differential equation. The aim of the current section is to derive this third-order equation, thereby proving Theorem~\ref{thm:ode}.

We will rely on the following input from~\cite{dubedat2015}.
\begin{lemma}[{\cite[Lemma 2]{dubedat2015}}]\label{lem:dub}
Let $n\ge1$, $\bx=(x_1,...,x_n)$ and $\varepsilon>0$. Let $U=\{(y_1,y_2,\bx):|y_1-u|<\varepsilon, 0\le y_2-y_1<\varepsilon,\|\bx-{\bf x}_0\|<\varepsilon\}$, and $\Delta=\{(y_1,y_2,\bx)\in U:y_1=y_2\}$. For $\rho,\tau,\sigma\in\R$, consider the differential operator
\begin{equation}\label{eq:operator-M}
\cM=\frac{1}{2}\partial_{y_1}^2+\left(\frac{\rho}{y_1-y_2}+a(y_1,y_2,\bx)\right)\partial_{y_1}+\left(\frac{\tau}{2(y_2-y_1)}+b(y_1,y_2,\bx)\right)\partial_{y_2}+{\bf X}+\left(-\frac{\tau \sigma}{2(y_1-y_2)^2}+\frac{d(y_1,y_2,\bx)}{y_2-y_1}\right)
\end{equation}
where ${\bf X}:=\sum_{i=1}^nc_i(y_1,\bx)\partial_{x_i}$, and $a,b,c,d$ are smooth on $U$. Suppose $\alpha_-<\alpha_+$ are the two roots of the indicial equation $\alpha(\alpha-1)+(\tau+2\rho)\alpha-\tau \sigma=0$.

Suppose $f$ is a real-valued smooth function on $U\setminus\Delta$ such that $\cM f=0$ and $f=O((y_2-y_1)^{\alpha_-+\delta})$ for some $\delta>0$. Further, assume that for ${\bf Y}:=\partial_{y_1}$ and ${\bf Y},{\bf X}$ satisfies the H\"ormander's condition on $U\setminus\Delta$. Then there exists a smooth function $g$ on $U$ such that $f=(y_2-y_1)^{\alpha_+}g$.
\end{lemma}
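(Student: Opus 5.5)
The plan is to work in adapted coordinates and reduce to a Fuchsian problem in the transverse variable. Set $s:=y_2-y_1\ge0$ and keep $(y_1,\bx)$ as tangential coordinates. Then $\partial_{y_2}=\partial_s$ and $\partial_{y_1}\mapsto\partial_{y_1}-\partial_s$.

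In these coordinates $\cM$ reads
\[
\tfrac12(\partial_{y_1}-\partial_s)^2-\frac{\rho}{s}(\partial_{y_1}-\partial_s)+\frac{\tau}{2s}\partial_s-\frac{\tau\sigma}{2s^2}+(\text{smooth first-order terms})+\frac{d}{s}.
\]
Its most singular part in $s$ is the Euler operator $\tfrac12\partial_s^2+\frac{\rho+\tau/2}{s}\partial_s-\frac{\tau\sigma}{2s^2}$. Its indicial polynomial is exactly $\tfrac12[\alpha(\alpha-1)+(\tau+2\rho)\alpha-\tau\sigma]$, which has the roots $\alpha_\pm$.

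Next, conjugate by $s^{\alpha_+}$. Put $g:=s^{-\alpha_+}f$ and $\widetilde\cM:=s^{-\alpha_+}\cM s^{\alpha_+}$. The $s^{-2}$ term then cancels, and $\widetilde\cM$ has the form
\[
\tfrac12(\partial_{y_1}-\partial_s)^2+\frac{c_0}{s}\partial_s+\frac{1}{s}(\text{first-order tangential operator})+\text{smooth terms},
\]
where $c_0=\alpha_+-\alpha_-+\tfrac12>\tfrac12$. For the radial part, the indicial roots become $0$ and $\alpha_--\alpha_+<0$. The hypothesis $f=O(s^{\alpha_-+\delta})$ translates into $g=O(s^{\alpha_--\alpha_++\delta})$, which is strictly less singular than the bad mode $s^{\alpha_--\alpha_+}$.

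The first real step is to show that $g$ is bounded, i.e.\ that the $s^{\alpha_--\alpha_+}$ mode is absent. I would do this with a maximum-principle/barrier argument for $\widetilde\cM$. Near $s=0$, the function $s^{\alpha_--\alpha_+}$ is a positive supersolution of the radial part that blows up at the boundary. Comparing $g$ with $\varepsilon s^{\alpha_--\alpha_+}+C$ on $\{s>\eta\}$ and letting $\eta\to0$ shows that $|g|\le C+\varepsilon s^{\alpha_--\alpha_+}$ for every $\varepsilon$; this is where the $\delta$-improvement is used. Hence $g$ is bounded. Equivalently, one can argue probabilistically: $\widetilde\cM$ generates a diffusion whose $s$-component is Bessel-like of dimension $2c_0>1$, and $g$ is a bounded harmonic function for it. The lower-order $\frac{d}{s}$ and $\frac{1}{s}$-tangential terms are handled by absorbing them into a slightly modified exponent in the barrier.

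The second step is regularity up to $\Delta$, which I expect to be the main obstacle. Away from $\Delta$, smoothness of $g$ follows from the Hörmander condition on $(\partial_{y_1},{\bf X})$, exactly as in Lemma~\ref{lem:smooth}. At $s=0$ the operator is degenerate in two directions: it is Fuchsian in $s$ and only hypoelliptic tangentially. My plan is to first obtain tangential regularity. Difference quotients in $(y_1,\bx)$ along ${\bf X}$ and its brackets satisfy the same type of equation with bounded right-hand sides, so iterating the bounded-solution argument gives $C^\infty$ tangential regularity, uniformly in $s$. Then I would treat $\widetilde\cM g=0$ as an ODE in $s$ with smooth tangential coefficients. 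For the regular root $0$ of an operator of the form $\partial_s^2+\frac{2c_0}{s}\partial_s$ plus lower order, variation of constants applied to a bounded solution shows that $g$ and all its $\partial_s$-derivatives extend continuously to $s=0$ as a power series in $s$ with smooth coefficients. If this direct route fails, I would fall back on Dubédat's approach: build the Poisson operator for $\widetilde\cM$ on a box touching $\Delta$ via Hörmander's theorem together with the Fuchsian structure, and identify $g$ with the solution through the Dynkin argument already used in Lemma~\ref{lem:smooth}. Combining the two steps gives $f=(y_2-y_1)^{\alpha_+}g$ with $g$ smooth on $U$.
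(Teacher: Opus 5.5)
The paper does not prove this lemma; it quotes it from Dub\'edat without proof, so your argument has to stand on its own. Your set-up is sound. In the coordinates $(y_1,s,\bx)$ the Euler part has the stated indicial polynomial, and conjugation by $s^{\alpha_+}$ removes the $s^{-2}$ term, leaving indicial roots $0$ and $\alpha_--\alpha_+$.

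There is one slip. After conjugation the coefficient of $\frac1s\partial_s$ is $\alpha_++\rho+\frac\tau2=\frac{1+\alpha_+-\alpha_-}{2}$, not $\alpha_+-\alpha_-+\frac12$; your value would give the root $-2(\alpha_+-\alpha_-)$, which contradicts your own claim. So the $s$-component is Bessel-like of dimension $2+\alpha_+-\alpha_->2$. It is this, not ``dimension $>1$'', that makes $\Delta$ unattainable. The conjugation also creates a singular tangential drift $-\frac{\rho+\alpha_+}{s}\partial_{y_1}$ and a singular potential $\frac{d+\alpha_+(b-a)}{s}$.

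\textbf{Step 1 is incomplete as written.} Any region $\{s>\eta\}\cap V$, with $V$ a box meeting $\Delta$, has lateral faces that reach down to $\Delta$. On those faces you only know $|g|\lesssim s^{\alpha_--\alpha_++\delta}$, so $C+\epsilon s^{\alpha_--\alpha_+}$ does not dominate $g$ there with $C$ independent of $\epsilon$, and letting $\epsilon\to0$ gives nothing. A constant is also not a supersolution, because of the $\frac1s$ potential. This is repairable by some genuine localization. For instance, use the Doob-transformed diffusion and show that exits through lateral faces occur at $s$ of order one, except on an event whose contribution weighted by $S_\tau^{\alpha_--\alpha_++\delta}$ is uniformly small.

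\textbf{The genuine gap is Step 2, which is the whole content of the lemma.} Note first that smoothness away from $\Delta$ is a hypothesis, so the H\"ormander condition must be used at $\Delta$; your proposal never does this.

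Bounded solutions do not yield derivative bounds here. Commuting $\widetilde\cM$ with $\partial_{y_1}$, $\partial_{x_i}$ or ${\bf X}$ (whose coefficients $c_i(y_1,\bx)$ vary) produces terms such as $(\partial_{y_1}c_i)\partial_{x_i}g$, the singular $\frac1s(\partial_{y_1}c_i)\partial_{x_i}g$, and second-order terms $(\partial_{y_1}c_i)\partial_{x_i}(\partial_{y_1}-\partial_s)g$. None of these is a priori bounded. So the differentiated function does not satisfy ``the same equation with bounded right-hand side'', and a maximum principle cannot control derivatives of this degenerate operator.

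Regularity in the $\bx$-directions can only come from the bracket (subelliptic) mechanism, and that mechanism degenerates at $\Delta$. Rescale at height $s$: by $s$ in $(y_1,s)$, and by $s^2$ in $\bx$ and in time. This normalizes the singular terms, but ${\bf X}$ becomes an almost constant transport whose brackets with ${\bf Y}$ are of size $O(s)$ and $O(s^2)$. So H\"ormander-type estimates are not uniform up to $\Delta$. Nor can H\"ormander's theorem be applied on a neighbourhood of $\Delta$, where the coefficients blow up.

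Tangential smoothness of the trace $g|_\Delta$ must therefore be produced non-locally. The transformed diffusion leaves $\Delta$ instantly and is smoothed at heights of order one. One must then show that this smoothing transfers back to starting points on $\Delta$, jointly with the $s$-dependence. Your proposal asserts this conclusion without supplying a mechanism. The fallback (``Poisson operator on a box touching $\Delta$ via H\"ormander plus the Fuchsian structure'') restates the claim rather than proving it.

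Your final ODE-in-$s$ step is fine once full tangential regularity of $g$ and of its $s$-derivatives is available, since the recursion $\tfrac12k(k+\alpha_+-\alpha_-)g_k=\cdots$ is never resonant. But everything rests on the missing step.
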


We now prove Theorem~\ref{thm:ode} based on Lemma~\ref{lem:dub}, following the approach as explained in~\cite[Section 2]{dubedat2015}. By symmetry, we will focus on the case $u<x_1<x_2<x_3$.

\begin{proof}[Proof of Theorem~\ref{thm:ode}]
In~\eqref{eq:operator-M}, we take $n=3$, $\rho=\frac{\kappa-6}{\kappa}$, $\tau=\frac{4}{\kappa}$, $\sigma=0$, $a=b=0$, $c_i=\frac{2/\kappa}{x_i-y_1}$ and $d=-\sum_{i=1}^3\frac{2h/\kappa}{(x_i-y_1)^2}(y_2-y_1)$ (hence $\alpha_-=0$, $\alpha_+=h$). Then for $j=1,2$, $F^{(j)}$ is smooth such that $\cM F^{(j)}=0$. Furthermore, by Proposition~\ref{prop:gf-convergence-sec2}, $F^{(j)}=O((y_2-y_1)^h)$ as $y_2\to y_1$. By direct computation, we have $[{\bf Y},{\bf X}]=\sum_{i=1}^3\frac{2/\kappa}{(x_i-y_1)^2}\partial_{x_i}$ and $[{\bf Y},[{\bf Y},{\bf X}]]=\sum_{i=1}^3\frac{4/\kappa}{(x_i-y_1)^3}\partial_{x_i}$, thus the H\"ormander's condition holds. Hence, we can apply Lemma~\ref{lem:dub} to $F^{(j)}$ to see that there exists a smooth $g^{(j)}$ on $U$ such that $F^{(j)}=v^h g^{(j)}(u,v,x_1,x_2,x_3)$.

Consider the Taylor expansion for $g^{(j)}=\sum_{n\ge1}^Ng_{n}^{(j)}v^n+O(v^{N+1})$ near $v=0$, where each $g_{n}^{(j)}$ is smooth. Since $F^{(j)}$ is invariant under $v\leftrightarrow-v$, only even powers appear, i.e.~$g_{2n+1}^{(j)}=0$ for every $n\ge0$. Consequently, $F^{(j)}$ has the expansion
\begin{equation}\label{eq:ansatz-1}
\begin{aligned}
F^{(j)}=v^h\sum_{n=0}^Nv^{2n}g_{2n}^{(j)}(u,x_1,x_2,x_3)+O(v^{2N+2})
\end{aligned}
\end{equation}
as $v\to0$. In particular, by Proposition~\ref{prop:gf-convergence-sec2}, we have
\begin{equation}\label{eq:g0-G}
g_0^{(1)}(u,x_1,x_2,x_3)=CG^{(1234)}(u,x_1,x_2,x_3),\quad g_0^{(2)}(u,x_1,x_2,x_3)=CG^{(12)(34)}(u,x_1,x_2,x_3)
\end{equation}
for some constant $C\in(0,\infty)$.
The expansion for partial derivatives of $F^{(j)}$ is similar, giving that
\begin{equation}\label{eq:ansatz-2}
\begin{aligned}
\partial_uF^{(j)}&=v^h\sum_{n=0}^Nv^{2n}\partial_ug_{2n}^{(j)}(u,x_1,x_2,x_3)+O(v^{2N+2}),
\\\partial_vF^{(j)}&=v^h\sum_{n=0}^N(2n+h)v^{2n-1}g_{2n}^{(j)}(u,x_1,x_2,x_3)+O(v^{2N+1})
\end{aligned}
\end{equation}
etc. (Note that $h\in(0,1)$ is not an integer).
Taking the above expansions~\eqref{eq:ansatz-1},~\eqref{eq:ansatz-2} into~\eqref{eq:pde}, by comparing the coefficients of $v^{h-2}$ and $v^{h-1}$, we first obtain (the ``zeroth order'' equations)
\begin{align*}
\frac{\kappa}{4}h(h-1)+\left(1+\frac{\kappa-6}{2}\right)h=0,\\
\frac{\kappa}{2}h-\left(1-\frac{\kappa-6}{2}\right)=0,
\end{align*}
which both hold since $h=\frac{8}{\kappa}-1$. Iteratively, by comparing the coefficients of $v^h$ and $v^{h+1}$, we have (the ``first order'' equations)
\begin{align*}
\frac{\kappa}{4}\left(\partial_{uu} g_0^{(j)}+(h+2)(h+1)g_2^{(j)}\right)+\left(1+\frac{\kappa-6}{2}\right)(h+2)g_2^{(j)}+\sum_{i=1}^3\left(\frac{4\partial_{x_i}g_0^{(j)}}{x_i-u}-\frac{4hg_0^{(j)}}{(x_i-u)^2}\right)=0,\\
\frac{\kappa}{2}(h+2)\partial_ug_2^{(j)}-\left(1-\frac{\kappa-6}{2}\right)\partial_ug_2^{(j)}+\sum_{i=1}^3\left(\frac{4\partial_{x_i}g_0^{(j)}}{(x_i-u)^2}-\frac{8hg_0^{(j)}}{(x_i-u)^3}\right)=0.
\end{align*}
Combining the above two equations to eliminate $g_2^{(j)}$ and taking into $h=\frac{8}{\kappa}-1$, we obtain
\begin{equation}\label{eq:fusion-pde}
\frac{\kappa}{4}\partial_u^3 g_0^{(j)}+\frac{1}{2}\left(1-\frac{8}{\kappa}\right)\sum_{i=1}^3\left(\frac{4\partial_{x_i}g_0^{(j)}}{(x_i-u)^2}-\frac{8hg_0^{(j)}}{(x_i-u)^3}\right)+\sum_{i=1}^3\left(\frac{4\partial_{ux_i}g_0^{(j)}}{x_i-u}-\frac{4h\partial_ug_0^{(j)}}{(x_i-u)^2}\right)=0.
\end{equation}
Recall that by~\eqref{eq:g0-G} and~\eqref{eq:u-lambda}, for the cross-ratio $\lambda=\frac{(x_1-u)(x_3-x_2)}{(x_3-x_1)(x_2-u)}$, we have
\begin{equation}\label{eq:g0-U}
g_0^{(1)}(u,x_1,x_2,x_3)=\frac{(1-\lambda)^{-2h}U^{(1234)}(\lambda)}{((x_1-u)(x_3-x_2))^{2h}},\quad g_0^{(2)}(u,x_1,x_2,x_3)=\frac{(1-\lambda)^{-2h}U^{(12)(34)}(\lambda)}{((x_1-u)(x_3-x_2))^{2h}}
\end{equation}
Substituting~\eqref{eq:g0-U} into~\eqref{eq:fusion-pde}, (after a long but standard calculation; see Appendix~\ref{appendix:matlab} for a verification using MATLAB) we obtain that $U^p(\lambda)$ satisfies~\eqref{eq:ode} for $p\in\{(1234),(12)(34)\}$. Finally, since $U^{(14)(23)}(\lambda)=U^{(12)(34)}(1-\lambda)$ and~\eqref{eq:ode} is invariant under $\lambda\leftrightarrow1-\lambda$, we have $U^{(14)(23)}(\lambda)$ also satisfies~\eqref{eq:ode}, so is $U^{\rm total}(\lambda)$.
\end{proof}

\section{Identification of solutions}\label{sec:solution}

Recall that $\kappa\in(4,8)$ and $h=\frac{8}{\kappa}-1\in(0,1)$.
Theorem~\ref{thm:identify} is based on the following
\begin{proposition}\label{prop:identify-v0}
As $x_2\to x_1$ (with $x_1,x_3,x_4$ fixed), we have
\[
G^{(1234)}(x_1,x_2,x_3,x_4)+G^{(12)(34)}(x_1,x_2,x_3,x_4)=H_{\hH}(x_1,x_2)^hH_{\hH}(x_3,x_4)^h(1+o(|x_2-x_1|^h)).
\]
\end{proposition}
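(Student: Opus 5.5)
We work in the bubble representation of Proposition~\ref{prop:cle-gf-to-bubble} and first write the sum as a single integral over bubbles pinned at $x_1$ and $x_2$. For a bubble $\gamma$ drawn from $\mu^\bub_{x_1,x_2}$, the quantity $G^{(1234)}+G^{(12)(34)}$ measures two events together: either the same loop also visits $x_3,x_4$, or an independent $\CLE_\kappa$ in $\cU(\gamma)$ produces a loop through $x_3,x_4$. By~\eqref{eq:bub-1} and the domain Markov property, both contributions are captured by a single observable. Condition on the part of $\gamma$ seen from $x_3,x_4$; by the decomposition of $(\mu^\bub_{x_1,x_2})^\#$ into an $\SLE_\kappa(2)$ from $x_1$ to $x_2$ followed by a chordal $\SLE_\kappa$ in $\cU(\eta)$ from $x_2$ back to $x_1$, we get
\[
G^{(1234)}+G^{(12)(34)}={\frak C}\,G^\bub_{x_1}(x_2)\int Z_{\cU(\eta)}(x_3,x_4)\,(\mu^\bub_{x_1,x_2})^\#(d\eta).
\]
Here $Z_D(x_3,x_4)$ denotes the two-point boundary connectivity of $x_3,x_4$ in the domain $D$. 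This is a $\CLE_\kappa$ with one wired arc $[x_2,x_1]$ (the second chordal piece) and free elsewhere, summing over whether $x_3,x_4$ lie on the chord's loop or on another loop. This is the decomposition I expect Corollary~\ref{cor:12-34} to provide. The main point is that, conditionally on the first piece $\eta$, the relevant configuration is a $\CLE_\kappa$ on $\cU(\eta)$ with the boundary arc between $x_2$ and $x_1$ wired.

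Next, map $\cU(\eta)$ to $\hH$ with $x_1,x_2$ sent to points that merge as $|x_2-x_1|\to0$, seen from $x_3,x_4$. The quantity $Z$ then becomes the boundary two-point function of $x_3,x_4$ in a $\CLE_\kappa$ with a small wired arc $J$. Equivalently, it is a ratio of partition functions of $\CLE_\kappa$ with two wired arcs (the arc $J$ and an infinitesimal arc near $x_3,x_4$ after renormalization). The key input is~\eqref{eq:z-tau}, derived from the connection probabilities of~\cite{MW18}. It says that this partition function has leading term equal to the product of the two free two-point functions, with the next correction decaying faster than $|J|^h$. The heuristic reason is that the subleading exponent in the fusion of two wired arcs is the three-arm exponent $3h+1>h$ (or $2h$-type), not $h$. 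So $Z_{\cU(\eta)}(x_3,x_4)=H_{\cU(\eta)}(x_3,x_4)^h\cdot(\text{const}+O(|J|^{h+\delta}))$, and normalizing at $J\to0$ identifies the constant as $1$, because without the arc $Z$ is just $H^h$.

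It remains to control $\eta$ itself. Because $\eta$ is an $\SLE_\kappa(2)$ bubble piece of diameter comparable to $|x_2-x_1|$ with overwhelming probability, we have $H_{\cU(\eta)}(x_3,x_4)^h=H_\hH(x_3,x_4)^h(1+O(|x_2-x_1|^2))$ on that event: the first correction of a boundary Poisson kernel under a small hull is the capacity term, which is of order $\mathrm{diam}^2$. The event that $\eta$ is large has probability decaying like a power of $|x_2-x_1|$ determined by the one-arm exponent, and $H_{\cU}\le H_\hH$ bounds the integrand. We then need this tail, weighted, to be $o(|x_2-x_1|^h)$; I expect the $\SLE_\kappa(2)$ one-arm exponent to exceed $h$, and would verify this with standard estimates. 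Combining this with $G^\bub_{x_1}(x_2)={\frak C}H_\hH(x_1,x_2)^h$ and fixing constants gives the claim.

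The main obstacle is the second step: justifying that the subleading correction in the wired-arc partition function is $o(|J|^h)$, uniformly over the random conformal image. For this I would use the explicit hypergeometric connection probability of~\cite{MW18} and expand it at the degenerate cross-ratio, checking that the next exponent after the leading one exceeds $h$.
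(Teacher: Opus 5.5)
Your first step is correct and is exactly Lemma~\ref{lem:re-express}. Conditionally on the $\SLE_\kappa(2)$ piece $\eta$ from $x_1$ to $x_2$, the sum is the $\wt\mu_{\hH,x_1,x_2}$-integral of the two-point connectivity $Z_{\cU(\eta)}(x_3,x_4)$ of a $\CLE_\kappa$ on $\cU(\eta)$ with the arc $\partial\eta$ wired.

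The gap is your second step. The bound $Z_{\cU(\eta)}(x_3,x_4)=H_{\cU(\eta)}(x_3,x_4)^h(1+O(|J|^{h+\delta}))$ is essentially the proposition itself, conditioned on $\eta$, and the tool you cite cannot produce it directly. The two-wired-arc connection probability (the Radon--Nikodym derivative~\eqref{eq:mw-derivative}) concerns a $\CLE_\kappa$ with \emph{two} wired arcs whose endpoints are the four marked points. Here only one arc is wired, and $x_3,x_4$ are free boundary points; shrinking arcs around $x_3$ and $x_4$ separately would give three wired arcs, which is not covered.

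The missing idea is to make $x_3,x_4$ the endpoints of a second, macroscopic wired arc, by sampling the $\SLE_\kappa(2)$ piece $\eta_{34}$ from $x_3$ to $x_4$. This needs two nontrivial identities:
\begin{itemize}
\item Lemma~\ref{lem:cascade}, the commutation relation for the bi-chordal $\SLE_\kappa$/$\SLE_\kappa(2)$ pair (via hypergeometric SLE limits and Lemma~\ref{lem:hsle} with $\nu=0$), which handles the $(12)(34)$ term;
\item Lemma~\ref{lem:cascade-2}, an exploration of the wired CLE combined with~\eqref{eq:mw-derivative}, which handles the chordal two-point Green's function in the $(1234)$ term.
\end{itemize}
Only then are both link patterns integrals against the same measure $\wt\mu_{\cU(\eta_{12});x_3,x_4}(d\eta_{34})\wt\mu_{\hH,x_1,x_2}(d\eta_{12})$. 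The weights are $\tau^{2b}f(1-\tau)\Phi_{x_1,x_2}^b$ and $\tau^{2b}f(\tau)\Phi_{x_1,x_2}^b/(-2\cos(4\pi/\kappa))$, where $\tau$ is the cross-ratio of the doubly slit domain $R(\eta_{12},\eta_{34})$. At that point~\eqref{eq:z-tau} becomes usable.

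Your heuristic for the cancellation is also off; no three-arm exponent is involved. Each weight separately has a relative correction of order exactly $\tau^h$. These cancel identically because the coefficient of the $\tau^h$-branch in the connection formula for ${}_2F_1(\frac4\kappa,1-\frac4\kappa;\frac8\kappa;1-\tau)$ is $\Gamma(\frac8\kappa)\Gamma(1-\frac8\kappa)/(\Gamma(\frac4\kappa)\Gamma(1-\frac4\kappa))=1/(2\cos(4\pi/\kappa))$, matching the weight in~\eqref{eq:mw-derivative}. The linear term also vanishes, leaving $O(\tau^2)$. The full $O(\tau^2)$ is used: on the good event one only knows $\tau=O(\varepsilon^{c_1-c_2})$, and one needs $2(c_1-c_2)>h$.

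Your tail control also has a hole. The bound $H_{\cU}\le H_{\hH}$ only handles the $(12)(34)$ contribution, as in~\eqref{eq:error-12,34}. Your integrand $Z_{\cU(\eta)}(x_3,x_4)$ also contains $G_{\cU(\eta),x_1,x_2}(x_4,x_3)$, which is not bounded by $H_{\hH}(x_3,x_4)^h$. It can be of order $\mathrm{dist}(x_3,\eta)^{-h}\mathrm{dist}(x_4,\eta)^{-h}$ when a macroscopic $\eta$ comes close to $x_3,x_4$. The paper handles this with:
\begin{itemize}
\item the Green's function bound~\eqref{eq:gf-estimate};
\item a dyadic decomposition in $\mathrm{dist}(x_3,\eta_{12})$ and $\mathrm{dist}(x_4,\eta_{12})$;
\item the boundary two-point estimate for $\SLE_\kappa(2)$, with exponent $3h+1$.
\end{itemize}
It also truncates at $\mathrm{diam}(\eta_{12})\le\varepsilon^{c_1}$ with $(3h+1)(1-c_1)>h$, not at scale $\varepsilon$, because the probability that $\mathrm{diam}(\eta)>K\varepsilon$ does not vanish with $\varepsilon$. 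Since $\tau$ depends on $\eta_{34}$, it additionally truncates at $\mathrm{dist}(x_1,\eta_{34})\ge\varepsilon^{c_2}$.
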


\begin{proof}[Proof of Theorem~\ref{thm:identify}, assuming Proposition~\ref{prop:identify-v0}]
Let $x_1<x_2<x_3<x_4$. The identification of $U^{(14)(23)}(\lambda)$ and $U^{(12)(34)}(\lambda)$ is straightforward. Indeed, by~\eqref{eq:bub-3} in Proposition~\ref{prop:cle-gf-to-bubble}, we have
\begin{align*}
G^{(14)(23)}(x_1,x_2,x_3,x_4)\propto\int H_{\cU(\gamma)}(x_2,x_3)^h \mu_{x_4,x_1}^\bub(d\gamma)&\le H_{\hH}(x_1,x_4)^h \int H_{\cU(\eta)}(x_2,x_3)^h\mu_{\hH,x_4,x_1}^\#(d\eta)\\
&=O(|x_2-x_1|^{\frac{4}{\kappa}}),\quad \text{as }  x_2\to x_1 \text{ and }x_1,x_3,x_4 \text{ fixed}.
\end{align*}
The inequality follows from that $H_{\cU(\gamma)}(x_2,x_3)$ under the law of $(\mu_{x_4,x_1}^\bub)^\#(d\gamma)$ is stochastically dominated by $H_{\cU(\eta)}(x_2,x_3)$ under $\mu_{\hH,x_4,x_1}^\#(d\eta)$, according to the decomposition of two-point rooted $\SLE_\kappa$ bubbles. The final equality follows from~\cite[Lemma 3.4, Proposition 3.5]{wu-hsle} (with taking $\nu=2$ there; see also Lemma~\ref{lem:hsle} below). Therefore, $U^{(14)(23)}(\lambda)=O(\lambda^{\frac{12}{\kappa}-1})=o(\lambda^h)$ as $\lambda\to0$, and hence $U^{(14)(23)}(\lambda)=C_1V_{3h+1}(\lambda)$ for some $C_1\in(0,\infty)$. The identification of $U^{(12)(34)}$ then follows from symmetry  $U^{(12)(34)}(\lambda)= U^{(14)(23)}(1-\lambda)$.

The identification of $U^{\rm total}(\lambda)$ relies on Proposition~\ref{prop:identify-v0}.
Since $U^{\rm total}(\lambda)$ is the solution of~\eqref{eq:ode}, thus there exists $C_2,\alpha,\beta$ such that $U^{\rm total}(\lambda)=C_2(V_0(\lambda)+\alpha V_h(\lambda)+\beta V_{3h+1}(\lambda))$. By Proposition~\ref{prop:identify-v0}, $\alpha=0$. On the other hand, since $U^{\rm total}(\lambda)=U^{\rm total}(1-\lambda)$,  Proposition~\ref{prop:identify-v0} also gives $U^{\rm total}(1-\lambda)=C_2(1+o(\lambda^h))$ as $\lambda\to0$. Note that $U^{(1234)}(\lambda)=O(\lambda^h)$ as $\lambda\to0$ (this can be seen e.g.~by combining Proposition~\ref{prop:gf-convergence-sec2} and the asymptotic behavior of the boundary Green's function of chordal $\SLE_\kappa$~\cite[Theorem 1.1]{fakhry2023}). By Proposition~\ref{prop:identify-v0}, we must have $U^{(12)(34)}(\lambda)=C_2+O(\lambda^h)$ and hence $V_{3h+1}(1-\lambda)=\frac{C_2}{C_1}+O(\lambda^h)$ as $\lambda\to0$. Thus such $\beta$ is unique. The ratio $\frac{C_1}{C_2}$ is determined by $\frac{U^{(12)(34)}(\lambda)}{U^{\rm total}(\lambda)}\to1$ as $\lambda\to0$.
\end{proof}

In this section, we first prove Proposition~\ref{prop:identify-v0} in Section~\ref{sec:identify-v0}, which relies on expressing $G^{(1234)}$ and $G^{(12)(34)}$ in terms of the partition function of $\CLE_\kappa$ with two wired boundary arcs~\cite{MW18} (see Corollary~\ref{cor:12-34}), and the explicit subleading behavior of the latter~\eqref{eq:z-tau}. Then in Section~\ref{sec:kappa=6}, we prove Theorems~\ref{thm:percolation} and~\ref{thm:fk} as applications of Theorems~\ref{thm:ode} and~\ref{thm:identify}. As we mentioned before, in these cases, the explicit forms of $V_0,V_h,V_{3h+1}$ were previously obtained in~\cite{GV,Gori:2018gqx}. We discuss solutions of~\eqref{eq:ode} for other special $\kappa$'s in Section~\ref{sec:kappa-le-4}.

\subsection{Proof of Proposition~\ref{prop:identify-v0}}\label{sec:identify-v0}
In this section we prove Proposition~\ref{prop:identify-v0}. We fix
\[
b=\frac{6-\kappa}{2\kappa}
\]
throughout this section.
Suppose $D$ is a Jordan domain, and $x,y\in\partial D$. Let $\mu_{D,x,y}^\#$ be the law of chordal $\SLE_\kappa$ on $D$ from $x$ to $y$, and $\wt\mu_{D,x,y}^\#$ be the law of chordal $\SLE_\kappa(2)$ on $D$ from $x$ to $y$, with the force point $x-$. When $\partial D$ is smooth near $x$ and $y$, we denote $\mu_{D,x,y}=H_D(x,y)^b\mu_{D,x,y}^\#$ and $\wt\mu_{D,x,y}=H_D(x,y)^h\wt\mu_{D,x,y}^\#$. Note that $\wt\mu_{D,x,y}$ is different from $\wt\mu_{D,y,x}$, while their total masses coincide. Here we choose the constant of the boundary Poisson kernel as in Proposition~\ref{prop:cle-gf-to-bubble}.

We start by the following forms of $G^{(1234)}(x_1,x_2,x_3,x_4)$ and $G^{(12)(34)}(x_1,x_2,x_3,x_4)$. According to symmetry, we focus on the case
\[
x_1<x_2<x_3<x_4
\]
throughout this section. Let $G_{D,x_1,x_2}(x_3,x_4)$ be the boundary two-point Green's function of a chordal $\SLE_\kappa$ from $x_1$ to $x_2$ on $D$ at $(x_3,x_4)$ (when $\partial D$ is smooth near $x_3,x_4$) such that
\[
G_{D,x_1,x_2}(x_3,x_4)=|\phi'(x_3)\phi'(x_4)|^hG_{\hH,\phi(x_1),\phi(x_2)}(\phi(x_3),\phi(x_4))
\]
for a conformal map $\phi:D\to\hH$, where $G_{\hH,\phi(x_1),\phi(x_2)}(\phi(x_3),\phi(x_4))$ is defined in~\eqref{eq:gf}.
\begin{lemma}\label{lem:re-express}
We have
\begin{align}
G^{(1234)}(x_1,x_2,x_3,x_4)
&=\int G_{\cU(\eta_{12}),x_1,x_2}(x_4,x_3)\wt \mu_{\hH,x_1,x_2}(d\eta_{12}),\label{eq:re-express-1}\\
G^{(12)(34)}(x_1,x_2,x_3,x_4)&=\int H_{\cU(\eta')}(x_3,x_4)^h\mu^\#_{\cU(\eta_{12}),x_1,x_2}(d\eta')\wt\mu_{\hH,x_1,x_2}(d\eta_{12}).\label{eq:re-express-2}
\end{align}
\end{lemma}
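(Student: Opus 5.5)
The plan is to start from Proposition~\ref{prop:cle-gf-to-bubble}, disintegrate the two-point rooted bubble measure $\mu^\bub_{x_1,x_2}$ using the decomposition from~\cite[Section 4.2]{zhan-bubble}, and then read off both identities.

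\textbf{Step 1: write both Green's functions in terms of $\mu^\bub_{x_1,x_2}$.} Combining~\eqref{eq:bub-2} with~\eqref{eq:def-gf-bub} and~\eqref{eq:bubble-two-pinned} gives, up to the normalizing constant,
\[
G^{(1234)}(x_1,x_2,x_3,x_4)\,dx_3dx_4 \propto \int \nu_{\gamma\cap\R}(dx_3)\,\nu_{\gamma\cap\R}(dx_4)\,\mu^\bub_{x_1,x_2}(d\gamma).
\]
The second quantity $G^{(12)(34)}$ is already written in this form by~\eqref{eq:bub-3}.

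\textbf{Step 2: decompose $\mu^\bub_{x_1,x_2}$.} I write $\mu^\bub_{x_1,x_2}=|\mu^\bub_{x_1,x_2}|\,(\mu^\bub_{x_1,x_2})^\#$. The total mass is $|\mu^\bub_{x_1,x_2}|=G^\bub_{x_1}(x_2)$, which is the $n=2$ case of~\eqref{eq:def-gf-bub}. By Proposition~\ref{prop:cle-gf-to-bubble} this mass is a constant multiple of $H_\hH(x_1,x_2)^h$, and the normalization of $H_\hH$ fixed there is exactly what makes the constants match; so $\mu^\bub_{x_1,x_2}$ carries the factor $H_\hH(x_1,x_2)^h$ of $\wt\mu_{\hH,x_1,x_2}$.

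For the probability measure I use the decomposition quoted in Section~\ref{sec:sle-bubble}:
\begin{itemize}
\item first sample $\eta_{12}\sim\wt\mu^\#_{\hH,x_1,x_2}$, an $\SLE_\kappa(2)$ with force point at $x_1-$;
\item then sample $\eta'$ as a chordal $\SLE_\kappa$ in $\cU(\eta_{12})$ from $x_2$ to $x_1$;
\item set $\gamma=\eta_{12}\oplus\eta'$.
\end{itemize}
By reversibility of chordal $\SLE_\kappa$ for $\kappa\in(4,8)$, $\eta'$ may equally be viewed as sampled from $\mu^\#_{\cU(\eta_{12}),x_1,x_2}$ with its orientation reversed. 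This yields the measure $\mu^\#_{\cU(\eta_{12}),x_1,x_2}(d\eta')\,\wt\mu_{\hH,x_1,x_2}(d\eta_{12})$ appearing in~\eqref{eq:re-express-2}. Since $\eta'$ is the outer part of $\gamma$, we have $\cU(\gamma)=\cU(\eta')$, and~\eqref{eq:re-express-2} follows immediately.

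\textbf{Step 3: locate $\gamma\cap(x_2,\infty)$.} For~\eqref{eq:re-express-1} I need that, almost surely, $\gamma\cap(x_2,\infty)=\eta'\cap(x_2,\infty)$. In other words, $\eta_{12}$ does not touch the boundary arc $(x_2,\infty)\cup\{\infty\}\cup(-\infty,x_1)$. This holds because the force point carries weight $2\ge\frac{\kappa}{2}-2$ when $\kappa<8$, which is the standard non-hitting criterion for $\SLE_\kappa(\rho)$.

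Once this is established, $\nu_{\gamma\cap\R}$ restricted to $(x_2,\infty)$ coincides with $\nu_{\eta'\cap\R}$. This uses that the Minkowski content~\eqref{eq:def-mink} is determined locally by the set near each point, and that $\partial\cU(\eta_{12})$ agrees with $\R$ near $x_3,x_4$. Conditionally on $\eta_{12}$, the inner integral $\int\nu_{\eta'\cap\R}(dx_3)\,\nu_{\eta'\cap\R}(dx_4)$ against $\mu^\#_{\cU(\eta_{12}),x_2,x_1}$ is then $G_{\cU(\eta_{12}),x_2,x_1}(x_3,x_4)$. This equals $G_{\cU(\eta_{12}),x_1,x_2}(x_4,x_3)$ by reversibility together with conformal covariance of~\eqref{eq:gf}. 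Integrating against $\wt\mu_{\hH,x_1,x_2}$ gives~\eqref{eq:re-express-1}.

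\textbf{Main obstacle.} I expect the hardest step to be the rigorous identification in Step 3: that $\eta_{12}$ almost surely avoids $[x_2+\delta,\infty)$, including near the endpoint $x_2$ where $\eta_{12}$ and $\eta'$ meet, so that the restricted Minkowski contents agree. The constant bookkeeping between ${\frak C}$, Zhan's constant in~\eqref{eq:bubble-two-pinned}, and the normalization of $H_\hH$ should be routine. It is absorbed by the choice in Proposition~\ref{prop:cle-gf-to-bubble} that $G^\bub_{x_1}(x_2)$ is the stated multiple of $H_\hH(x_1,x_2)^h$.
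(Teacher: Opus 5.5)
Your proposal is correct and is essentially the paper's proof. Both start from Proposition~\ref{prop:cle-gf-to-bubble} together with \eqref{eq:bubble-two-pinned}, disintegrate $\mu^\bub_{x_1,x_2}$ via the $\SLE_\kappa(2)$-then-chordal-$\SLE_\kappa$ decomposition (with reversibility to orient $\eta'$), and identify the inner integral with $G_{\cU(\eta_{12}),x_1,x_2}(x_4,x_3)$ by conformal covariance of the Minkowski content. Your Step~3 argues that $\eta_{12}$ avoids $(x_2,\infty)$ because $2>\frac{\kappa}{2}-2$, so the contents at $x_3,x_4$ come from $\eta'$ alone; this simply makes explicit what the paper leaves implicit (its displayed identity writes $\nu_{\eta_{12}\cap\R}(dx_3)\nu_{\eta_{12}\cap\R}(dx_4)$, which should be read with $\eta'$ in place of $\eta_{12}$).
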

\begin{proof}
\eqref{eq:re-express-2} follows from~\eqref{eq:bub-3} in Proposition~\ref{prop:cle-gf-to-bubble} and the decomposition of the $\SLE_\kappa$ bubble measure with two marked points (see Section~\ref{sec:sle-bubble}). For~\eqref{eq:re-express-1}, note that combining Proposition~\ref{prop:touching-equals-bubble},~\eqref{eq:bubble-two-pinned} and the decomposition above gives
\[
\E\left[\sum_{\ell\in\cT(\Gamma)}\prod_{i=1}^4\nu_{\ell\cap\R}(dx_i)\right]=\int \nu_{\eta_{12}\cap\R}(dx_3)\nu_{\eta_{12}\cap\R}(dx_4)\mu_{\cU(\eta_{12}),x_1,x_2}^\#(d\eta')\wt\mu_{\hH,x_1,x_2}(d\eta_{12}) dx_1dx_2.
\]
Here the integral on the right side is with respect to the measure $\wt\mu_{\hH,x_1,x_2}(d\eta_{12})$. The result then follows from~\eqref{eq:gf} and the conformal covariance of Minkowski content.
\end{proof}

We prove Proposition~\ref{prop:identify-v0} by a detailed analysis of the right sides of~\eqref{eq:re-express-1} and~\eqref{eq:re-express-2}. We record the following result from~\cite{wu-hsle}, which arises from the partition function of hypergeometric SLE\footnote{The term hypergeometric SLE was earlier introduced in~\cite{qian-trichordal} to refer to a broader class of SLEs.}.

\begin{lemma}[{\cite[Lemma 3.4, Proposition 3.5]{wu-hsle}}]\label{lem:hsle}
For $\eta_{12}$ sampled from $\mu_{\hH,x_1,x_2}(d\eta_{12})$, recall that $\cU(\eta_{12})$ is the unbounded connected component of $\hH\setminus\eta_{12}$. For $\nu\ge\frac{\kappa}{2}-4$, let $\alpha=\frac{\nu+2}{\kappa}$ and $\beta=\frac{(\nu+2)(\nu+6-\kappa)}{4\kappa}$. Then we have
\[
\int H_{\cU(\eta_{12})}(x_3,x_4)^\beta\mu_{\hH,x_1,x_2}(d\eta_{12})= H_{\hH}(x_1,x_2)^b H_{\hH}(x_3,x_4)^\beta \frac{(1-\lambda)^\alpha{}_2F_1(2\alpha,1-\frac{4}{\kappa};2\alpha+\frac{4}{\kappa};1-\lambda)}{{}_2F_1(2\alpha,1-\frac{4}{\kappa};2\alpha+\frac{4}{\kappa};1)},\]
where $\lambda=\frac{(x_2-x_1)(x_4-x_3)}{(x_3-x_1)(x_4-x_2)}$ is the cross-ratio.
\end{lemma}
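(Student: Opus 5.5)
By conformal covariance of both sides it suffices to work with fixed $x_1<x_2<x_3<x_4$ and to study the normalized quantity
\[
\Phi(x_1,x_2,x_3,x_4):=H_{\hH}(x_3,x_4)^{-\beta}\int H_{\cU(\eta_{12})}(x_3,x_4)^\beta\mu^\#_{\hH,x_1,x_2}(d\eta_{12}).
\]
This quantity is Möbius invariant, hence of the form $f(\lambda)$ for a function $f$ of the cross-ratio. The left side of the lemma is then $H_{\hH}(x_1,x_2)^bH_{\hH}(x_3,x_4)^\beta f(\lambda)$, so the claim reduces to identifying $f(\lambda)$ with the normalized hypergeometric expression. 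The plan is the standard martingale/ODE route, parallel to Lemma~\ref{lem:f-basic} and Proposition~\ref{prop:pde-u-v}.

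\textbf{Step 1: martingale.} Grow $\eta_{12}$ from $x_1$, viewed as $\SLE_\kappa(\kappa-6)$ from $x_1$ to $\infty$ with force point $x_2$. By the domain Markov property and the conformal covariance $H_{\cU}(x_3,x_4)=g_t'(x_3)g_t'(x_4)H_{g_t(\cU)}(g_t(x_3),g_t(x_4))$, the process
\[
N_t=g_t'(x_3)^\beta g_t'(x_4)^\beta H_{\hH}(g_t(x_3),g_t(x_4))^\beta f(\lambda_t)
\]
is a bounded martingale, with $\lambda_t$ the cross-ratio of $(W_t,g_t(x_2),g_t(x_3),g_t(x_4))$. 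It is bounded by $1$ up to the Poisson-kernel factor, by monotonicity of $H$.

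\textbf{Step 2: smoothness and ODE.} Smoothness of $f$ follows as in Lemma~\ref{lem:smooth}: the diffusion of $(g_t(x_i)-W_t)$ together with the drift satisfies Hörmander's condition, so the harmonic function is $C^\infty$. Itô's formula then gives a second-order ODE for $f$. After the substitution $f(\lambda)=(1-\lambda)^{\alpha}q(1-\lambda)$, with $\alpha$ the root of the indicial equation at $\lambda=1$ determined by $\beta$, this ODE becomes Gauss's hypergeometric equation with parameters $(2\alpha,1-\frac4\kappa;2\alpha+\frac4\kappa)$. I would verify this by matching the indicial roots: at $\lambda=0$ they are $0$ and one other root, and at $\lambda=1$ they are $\alpha$ and $1-\frac8\kappa-\alpha$ (or similar). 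The relation $\beta=\frac{(\nu+2)(\nu+6-\kappa)}{4\kappa}$ is exactly the Coulomb-gas relation that makes these roots come out this way.

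\textbf{Step 3: boundary conditions, the main obstacle.} As $\lambda\to0$ (that is, $x_2\to x_1$), the curve stays in a small neighbourhood of $x_1$ with high probability, so $H_{\cU(\eta)}(x_3,x_4)/H_{\hH}(x_3,x_4)\to1$ by bounded convergence and hence $f(0+)=1$. As $\lambda\to1$ (that is, $x_3\to x_2$), $H_{\cU(\eta)}(x_3,x_4)^\beta$ is small unless $\eta$ avoids a neighbourhood of $[x_2,x_3]$ at the relevant scale. Here I would use the one-sided boundary arm estimate, or equivalently a Girsanov argument: weighting $\mu^\#$ by $N_t/N_0$ yields an $\SLE_\kappa(\nu)$-type process, which is well defined and non-degenerate precisely when $\nu\ge\frac\kappa2-4$. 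This gives $f(\lambda)=O((1-\lambda)^{\alpha})$, which rules out the solution with the smaller exponent at $1$.

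Once the behaviour at $\lambda=1$ is controlled, the solution is the unique one proportional to $(1-\lambda)^\alpha{}_2F_1(2\alpha,1-\frac4\kappa;2\alpha+\frac4\kappa;1-\lambda)$. The normalization $f(0)=1$ then fixes the constant as $1/{}_2F_1(\cdots;1)$, which is finite by Gauss's summation since $c-a-b=\frac8\kappa-1>0$. The delicate part is justifying the exclusion of the second exponent near $\lambda=1$ uniformly for all $\nu\ge\frac\kappa2-4$, including the borderline case where the roots are close or the curve may touch the boundary.
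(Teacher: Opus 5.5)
There is a genuine gap in Steps 1 and 3, and it depends on the sign of $\beta$. Write $\Omega$ for the component of $\hH\setminus\eta_{12}$ having $x_3,x_4$ on its boundary.

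First, a correction that exposes the structure. The exponents of your ODE at $\lambda=1$ are $\alpha$ and $1-\frac4\kappa-\alpha$, not $1-\frac8\kappa-\alpha$. Indeed $1-\frac4\kappa-\alpha=\frac{\nu'+2}{\kappa}$, where $\nu'=\kappa-8-\nu$ is the other root of $\frac{(\nu+2)(\nu+6-\kappa)}{4\kappa}=\beta$. So the hypothesis $\nu\ge\frac\kappa2-4$ says exactly that $\alpha$ is the larger exponent.

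For $\kappa\in(4,8)$ and $\nu\in[\frac\kappa2-4,\kappa-6)$ one has $\beta<0$. This range matters: it contains $\nu=0$ for $\kappa\in(6,8)$, which is how the lemma is used in Lemma~\ref{lem:cascade}, where $\beta=b<0$. In this range $H_\Omega(x_3,x_4)^\beta\ge H_{\hH}(x_3,x_4)^\beta$ is unbounded. Consequently:
\begin{itemize}
\item your claim that $N_t$ is bounded by monotonicity of $H$ fails;
\item the finiteness and local boundedness of $f$, which the H\"ormander/optional-stopping argument of Step 2 presupposes, are not available;
\item $f(0+)=1$ no longer follows from bounded convergence.
\end{itemize}

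More seriously, both exponents $\alpha$ and $1-\frac4\kappa-\alpha$ are then positive. They coincide at $\nu=\frac\kappa2-4$, where the second solution is logarithmic. So discarding the wrong solution needs a genuine quantitative a priori bound $f=o\big((1-\lambda)^{1-4/\kappa-\alpha}\big)$, plus exclusion of the logarithm in the degenerate case.

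Your justification for this is circular. $N_t$ is built from the unknown $f$, so the process obtained by weighting with $N_t/N_0$ cannot be identified as an $\SLE_\kappa(\nu)$-type process without already knowing $f$. Suppose instead you weight by the positive local martingale $\tilde N_t$ built from the explicit candidate $\tilde f$. Then you must show its terminal value is $\mathbf 1_{\{\eta_{12}\cap[x_3,x_4]=\emptyset\}}H_\Omega(x_3,x_4)^\beta$. A lower bound on $\liminf_t \tilde N_t$ would suffice: Fatou then gives $f\le\tilde f$, which excludes the wrong solution.

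That terminal-value analysis is the real content of the result. The paper gives no proof of this lemma and simply imports it from \cite{wu-hsle}, where the explicit hypergeometric partition function is the starting point and is verified, rather than derived from an ODE. Your proposal does not supply this step.

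For $\beta\ge0$, i.e.\ $\nu\ge\kappa-6$, your route does close, and more cheaply than you suggest. Then $1-\frac4\kappa-\alpha\le0$. The unwanted solution therefore blows up at $\lambda=1$ when $\beta>0$, and tends to a nonzero constant when $\beta=0$. Two trivial facts rule it out: $0\le f\le1$, and, for $\beta=0$, $f\to0$ as $x_3\downarrow x_2$ because $\eta_{12}$ a.s.\ hits $(x_2,x_4)$. No arm estimate and no sharp bound $f=O((1-\lambda)^\alpha)$ is needed.

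Finally, regarding your first step: $\Phi$ is M\"obius invariant only if the integrand is read as $\mathbf 1_{\{\eta_{12}\cap[x_3,x_4]=\emptyset\}}H_\Omega(x_3,x_4)^\beta$. With the literal unbounded component $\cU(\eta_{12})$ it is not invariant, because for $\kappa\in(4,8)$ the curve can hit $(x_4,\infty)$ without touching $[x_3,x_4]$. For $\beta=0$ the normalized right-hand side is exactly $\P[\eta_{12}\cap[x_3,x_4]=\emptyset]$, which confirms the invariant reading. The indicator is also needed to make sense of $H^\beta$ when $\beta<0$.
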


We mainly use the $\nu=0$ case of Lemma~\ref{lem:hsle}, which also appeared earlier in~\cite{BBK-05,dubedat-euler}.
In the following, we write
\begin{equation}\label{eq:f(x)}
f(x)=x^{\frac{2}{\kappa}}(1-x)^{1-\frac{6}{\kappa}}\frac{{}_2F_1(\frac{4}{\kappa},1-\frac{4}{\kappa};\frac{8}{\kappa};x)}{{}_2F_1(\frac{4}{\kappa},1-\frac{4}{\kappa};\frac{8}{\kappa};1)}.
\end{equation}
We will rely on a key observation on the explicit subleading behavior of $f(x)$ in the proof of Proposition~\ref{prop:identify-v0}; see~\eqref{eq:z-tau} below.
The following lemma deals with the right side of~\eqref{eq:re-express-2}.

\begin{lemma}\label{lem:cascade}
Consider the measure $\wt\mu_{\cU(\eta_{12}),x_3,x_4}(d\eta_{34})\mu_{\hH,x_1,x_2}(d\eta_{12})$. Then its total mass
\begin{equation}\label{eq:cascade}
\int H_{\cU(\eta_{12})}(x_3,x_4)^h\mu_{\hH,x_1,x_2}(d\eta_{12})=\int H_{\cU(\eta)}(x_1,x_2)^b\tau ^{2b}f(1-\tau)\wt\mu_{\hH,x_3,x_4}(d\eta).
\end{equation}
Here $\tau\in(0,1)$ is such that the $(\cU(\eta),x_1,x_2,x_3,x_4)$ is conformally equivalent to $(\hH,0,\tau,1,\infty)$.
\end{lemma}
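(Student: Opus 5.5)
We view the measure $\wt\mu_{\cU(\eta_{12}),x_3,x_4}(d\eta_{34})\mu_{\hH,x_1,x_2}(d\eta_{12})$ as a measure on \emph{pairs} of curves $(\eta_{12},\eta_{34})$ and compute its total mass in two ways, by a resampling (commutation) argument. The left side of~\eqref{eq:cascade} is immediate from the definitions: the inner mass is $|\wt\mu_{\cU(\eta_{12}),x_3,x_4}|=H_{\cU(\eta_{12})}(x_3,x_4)^h$. For the right side, we exchange the order of sampling. We claim the pair measure equals
\[
\mu_{\cU(\eta_{34}),x_1,x_2}(d\eta_{12})\,\mathbf 1_{E}\,\wt\mu_{\hH,x_3,x_4}(d\eta_{34}),
\]
up to the correct Radon--Nikodym factor, where $E$ is the event that $\eta_{12}$ and $\eta_{34}$ are compatible (in particular, $\eta_{12}$ does not disconnect $x_3,x_4$). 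Integrating out $\eta_{12}$ then requires the total mass of chordal $\SLE_\kappa$ in $\cU(\eta_{34})$ from $x_1$ to $x_2$, weighted by $H_{\cU(\eta_{12})}(x_3,x_4)$-type quantities. This is exactly Lemma~\ref{lem:hsle} applied in the domain $(\cU(\eta),x_1,x_2,x_3,x_4)\cong(\hH,0,\tau,1,\infty)$.

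\textbf{Step 1.} Establish the exchange. The cleanest route is the commutation/resampling property of bichordal configurations: the measure on pairs with density $H^b$ for $\eta_{12}$ and $H^h$ for the $\SLE_\kappa(2)$ curve $\eta_{34}$ is a ``bi-chordal'' measure with a global partition function $\mathcal Z(x_1,x_2,x_3,x_4)$. Both conditional laws are then determined: $\eta_{12}$ given $\eta_{34}$ is an $\SLE_\kappa$-type curve in $\cU(\eta_{34})$, and $\eta_{34}$ given $\eta_{12}$ is $\SLE_\kappa(2)$ in $\cU(\eta_{12})$. I would prove this by a Loewner-growth martingale computation. Grow $\eta_{12}$ a little; the Radon--Nikodym derivative $H_{\cU(\eta_{12})}(x_3,x_4)^h$, as a function of the growing hull, is a local martingale times the right conformal factor. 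It then suffices to check that the claimed reversed-order description yields the same martingale, which matches the explicit formula in Lemma~\ref{lem:hsle}. Alternatively, I would invoke the uniqueness of commuting pairs (Dub\'edat commutation) with the marginal of $\eta_{34}$ identified via its partition function.

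\textbf{Step 2.} Identify the weight. Given $\eta=\eta_{34}$, map $\cU(\eta)$ to $\hH$ sending $x_1,x_2,x_3,x_4\mapsto 0,\tau,1,\infty$. The conditional mass of $\eta_{12}$ equals $H_{\cU(\eta)}(x_1,x_2)^b$ times the probability-type factor from Lemma~\ref{lem:hsle} with $\nu=0$, namely $\alpha=2/\kappa$ and $\beta=(6-\kappa)/(2\kappa)\cdot\ldots$. Here we must account for the fact that the wired arc $\eta_{34}$ now plays the role of the ``$x_3x_4$'' boundary. The cross-ratio of $(0,\tau,1,\infty)$ is $\tau$, and the hypergeometric expression $(1-\lambda)^\alpha{}_2F_1(\cdots;1-\lambda)$ rewritten in $1-\tau$, combined with the conformal factors, should produce $\tau^{2b}f(1-\tau)$ with $f$ as in~\eqref{eq:f(x)}. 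I would verify this by matching exponents: $x^{2/\kappa}(1-x)^{1-6/\kappa}$ at $x=1-\tau$ together with $\tau^{2b}=\tau^{(6-\kappa)/\kappa}$.

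The main obstacle is Step 1. Making the exchange of sampling order rigorous for $\kappa\in(4,8)$, where curves touch the boundary and each other, requires care about the compatibility event and about boundary behaviour at $x_1,x_2$ on the fractal arc $\eta_{34}$. I would first try a direct martingale/optional-stopping argument in the Loewner chain of $\eta_{34}$, using Lemma~\ref{lem:hsle} to supply the explicit martingale, and only then check the hypergeometric identity in Step 2.
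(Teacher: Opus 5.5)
Your outer structure is reasonable, and your Step 2 arithmetic is correct: the cross-ratio is $\tau$, $\beta=b$ for $\nu=0$, and $(1-\tau)^{2/\kappa}\,{}_2F_1(\frac4\kappa,1-\frac4\kappa;\frac8\kappa;1-\tau)/{}_2F_1(\frac4\kappa,1-\frac4\kappa;\frac8\kappa;1)=\tau^{2b}f(1-\tau)$ because $\tau^{2b}\tau^{1-6/\kappa}=1$. But Step 1 is the whole content of the lemma, and it has a genuine gap.

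\textbf{The exchange is set up with the wrong curve.} $\wt\mu_{\cU(\eta_{12}),x_3,x_4}$ (force point at $x_3^-$) is the law of the \emph{inner} arc of a two-pointed bubble at $x_3,x_4$. It stays away from the boundary arc containing $\eta_{12}$. Given this inner arc, $\eta_{12}$ is not $\SLE_\kappa$ in $\cU(\eta_{34})$. Its conditional law carries the weight $H_R(x_3,x_4)^b/H_{\cU(\eta_{34})}(x_3,x_4)^b$, where $R$ is the region between the two curves. This weight is the mass of the \emph{outer} $\SLE_\kappa$ arc of the bubble, which has been integrated out. That outer arc produces both the exponent $b$ and the cancellation of the factor $H(x_3,x_4)^b$ in Lemma~\ref{lem:hsle}. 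Since it never appears in your argument, ``the correct Radon--Nikodym factor'' can only be reverse-engineered from the answer.

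\textbf{Neither suggested proof of the exchange closes the gap.} Knowing the two conditional laws, or Dub\'edat-type local commutation, at best characterizes a normalized pair law. For $\kappa\in(4,8)$ even that requires a nontrivial global resampling-uniqueness theorem. So it cannot pin down the total mass, which is exactly what \eqref{eq:cascade} asserts. The martingale check is circular. The only automatic martingale in the growth of $\eta_{12}$ is the conditional total mass given $\eta_{12}[0,t]$, i.e.\ the unknown partition function itself. You cannot evaluate it under the reversed description without already having the identity.

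\textbf{The missing idea.} The paper passes to the outer arc, where the commutation is exact. Since $\wt\mu_{D,x_3,x_4}$ and $\wt\mu_{D,x_4,x_3}$ have the same total mass, one may replace the former by the latter and prove
\[
\mu_{\cU(\eta_{34}),x_1,x_2}(d\eta_{12})\,\wt\mu_{\hH,x_4,x_3}(d\eta_{34})=\wt\mu_{\cU(\eta_{12}),x_4,x_3}(d\eta_{34})\,\mu_{\hH,x_1,x_2}(d\eta_{12})
\]
with no Radon--Nikodym factor. This comes from a fusion argument in three steps.
First, $\varepsilon^{-2h}$ times the $\eta_{34}$-marginal of the bichordal $\SLE_\kappa$ pair $\mu_{\cU(\eta'),x_3,x_4}(d\eta_{34})\mu_{\hH,x_3-\varepsilon,x_4+\varepsilon}(d\eta')$ (hypergeometric SLE with $\nu=0$) converges to $C\,\wt\mu_{\hH,x_4,x_3}$.
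Second, at fixed $\varepsilon$ the triple $(\eta_{12},\eta_{34},\eta')$ of $\SLE_\kappa$ curves can be reordered by applying the exact bichordal symmetry twice.
Third, letting $\varepsilon\to0$, the same constant $C$ appears on both sides, so the total masses match exactly.

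Integrating out $\eta_{12}$ then gives $\int H_{\cU(\eta')}(x_1,x_2)^b\,\wt\mu_{\hH,x_4,x_3}(d\eta')$. The bubble decomposition writes $\wt\mu_{\hH,x_4,x_3}(d\eta')$ as $\mu^\#_{\cU(\eta),x_3,x_4}(d\eta')\,\wt\mu_{\hH,x_3,x_4}(d\eta)$. Lemma~\ref{lem:hsle} with $\nu=0$ is then applied to the $\SLE_\kappa$ arc $\eta'$ in $\cU(\eta)$ with weight $H_{\cU(\eta')}(x_1,x_2)^b$. Applying Lemma~\ref{lem:hsle} to $\eta_{12}$ instead, as you intend, also works once this triple structure is available (by bichordal symmetry in $\cU(\eta)$), but not before.
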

\begin{proof}
The left side of~\eqref{eq:cascade} equals the total mass of $\wt\mu_{\cU(\eta_{12}),x_4,x_3}(d\eta_{34})\mu_{\hH,x_1,x_2}(d\eta_{12})$. We first claim that it equals $\int H_{\cU(\eta')}(x_1,x_2)^b\wt\mu_{\hH,x_4,x_3}(d\eta')$ (i.e.~the commutation relation of bi-chordal $\SLE_\kappa$ and $\SLE_\kappa(2)$ pair). Define $M_{\hH,x_3,x_4,\varepsilon}:=\mu_{\cU(\eta'),x_3,x_4}(d\eta_{34})\mu_{\hH,x_3-\varepsilon,x_4+\varepsilon}(d\eta')$. By~\cite[Lemma 3.7]{wu-hsle}, as $\varepsilon\to0$, the marginal law of $\eta_{34}$ under $M_{\hH,x_3,x_4,\varepsilon}$ (which corresponds to the hypergeometric SLE with parameter $\nu=0$ there), times $\varepsilon^{-2h}$, converges weakly to $C\wt\mu_{\hH,x_4,x_3}(d\eta_{34})$ for some $C\in(0,\infty)$. Now consider the measure defined on the triples
$N_\varepsilon:=\mu_{\cU(\eta_{34}),x_1,x_2}(d\eta_{12})M_{\hH,x_3,x_4,\varepsilon}(d\eta_{34},d\eta')$. Taking $\varepsilon\to0$, the marginal law of $(\eta_{12},\eta_{34})$ under $N_\varepsilon$ thus converges weakly to $C\mu_{\cU(\eta_{34}),x_1,x_2}(d\eta_{12})\wt\mu_{\hH,x_4,x_3}(d\eta_{34})$. On the other hand, using the symmetry for bi-chordal $\SLE_\kappa$ (see e.g.~\cite[Proposition 6.10]{wu-hsle}) twice, we also have $N_\varepsilon=M_{\cU(\eta_{12}),x_3,x_4,\varepsilon}(d\eta_{34},d\eta')\mu_{\hH,x_1,x_2}(d\eta_{12})$. Then as $\varepsilon\to0$, $N_\varepsilon$ also converges weakly to $C\wt\mu_{\cU(\eta_{12}),x_4,x_3}(d\eta_{34})\mu_{\hH,x_1,x_2}(d\eta_{12})$. Consequently, we have
\[
\mu_{\cU(\eta_{34}),x_1,x_2}(d\eta_{12})\wt\mu_{\hH,x_4,x_3}(d\eta_{34})=\wt\mu_{\cU(\eta_{12}),x_4,x_3}(d\eta_{34})\mu_{\hH,x_1,x_2}(d\eta_{12}),
\]
and the claim follows by comparing the total masses on both sides.

Note that $\wt\mu_{\hH,x_4,x_3}(d\eta')$ can be obtained by first sampling $\eta$ from $\wt\mu_{\hH,x_3,x_4}(d\eta)$, and then sampling $\eta'$ from $\mu_{\cU(\eta),x_3,x_4}^\#$. Combined with the above claim, we have
\[
\int H_{\cU(\eta_{12})}(x_3,x_4)^h\mu_{\hH,x_1,x_2}(d\eta_{12})=\int H_{\cU(\eta')}(x_1,x_2)^b\mu_{\cU(\eta),x_3,x_4}^\#(d\eta') \wt\mu_{\hH,x_3,x_4}(d\eta).
\]
By Lemma~\ref{lem:hsle} with $\nu=0$ (hence $\alpha=\frac{2}{\kappa}$ and $\beta=b$), the right side above equals
\[
\int H_{\cU(\eta)}(x_1,x_2)^b(1-\tau)^{\frac{2}{\kappa}}\frac{{}_2F_1(\frac{4}{\kappa},1-\frac{4}{\kappa};\frac{8}{\kappa};1-\tau)}{{}_2F_1(\frac{4}{\kappa},1-\frac{4}{\kappa};\frac{8}{\kappa};1)}\wt\mu_{\hH,x_3,x_4}(d\eta)=\int H_{\cU(\eta)}(x_1,x_2)^b\tau ^{2b}f(1-\tau)\wt\mu_{\hH,x_3,x_4}(d\eta),
\]
as desired.
\end{proof}

Next, we use the connection probability of $\CLE_\kappa$ with two wired boundary arcs~\cite{MW18} to derive a similar expression for the boundary two-point Green's function of chordal $\SLE_\kappa$, which deals with the right side of~\eqref{eq:re-express-1}. We refer readers to~\cite[Section 2]{MW18} for backgrounds on CLE with two wired boundary arcs.
Recall the boundary two-point Green's function $G_{\hH,x_1,x_2}(x_3,x_4)$ of the chordal $\SLE_\kappa$ defined in~\eqref{eq:gf}.
\begin{lemma}\label{lem:cascade-2}
We have
\[
H_{\hH}(x_1,x_2)^b G_{\hH,x_1,x_2}(x_3,x_4) =\frac{1}{-2\cos\left(\frac{4\pi}{\kappa}\right)}\int H_{\cU(\eta)}(x_1,x_2)^b\tau ^{2b}f(\tau)\wt\mu_{\hH,x_3,x_4}(d\eta).
\]
Here $\tau$ is such that the $(\cU(\eta),x_1,x_2,x_3,x_4)$ is conformally equivalent to $(0,\tau,1,\infty)$.
\end{lemma}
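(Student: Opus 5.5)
The plan is to prove Lemma~\ref{lem:cascade-2} as the ``other connection pattern'' companion of Lemma~\ref{lem:cascade}, using the two-wired-arc $\CLE_\kappa$ of~\cite{MW18}. The factor $\tau^{2b}f(1-\tau)$ in Lemma~\ref{lem:cascade} should be read as the weight of one of the two connection patterns in a quad with two wired arcs. The factor $\tau^{2b}f(\tau)$ here should be the weight of the other pattern, corrected by the loop weight $n=-2\cos(4\pi/\kappa)$.

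\textbf{Step 1: a common reference measure.} Sample $\eta$ from $\wt\mu_{\hH,x_3,x_4}$; this is the half of a two-point pinned bubble at $(x_3,x_4)$ seen from $x_1,x_2$. In the quad $(\cU(\eta);x_1,x_2,x_3,x_4)\cong(\hH;0,\tau,1,\infty)$, take the boundary arc along $\eta$ wired and the arc $[x_1,x_2]$ wired. By~\cite{MW18}, the interface configuration has total mass $H_{\cU(\eta)}(x_1,x_2)^b\tau^{2b}\times(\text{normalizing constant})$, split into two connection patterns.
\begin{enumerate}[(a)]
\item The interface from $x_1$ closes up at $x_2$ without using $\eta$. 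Its conditional probability is proportional to $f(1-\tau)$.
\item The interface from $x_1$ runs through $x_3$ and $x_4$ along $\eta$. Its conditional probability is proportional to $f(\tau)$.
\end{enumerate}
I expect the ratio of the two weights, up to the normalization in~\eqref{eq:f(x)}, to be exactly $n$ (or $1/n$). This is read off from the explicit Coulomb-gas formulas of~\cite{MW18}.

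\textbf{Step 2: identify pattern (b) with the Green's function.} The goal is to show that pattern (b) reproduces $H_{\hH}(x_1,x_2)^bG_{\hH,x_1,x_2}(x_3,x_4)$. Start from $\mu_{\hH,x_1,x_2}$ weighted by $\nu_{\eta_{12}\cap\R}(dx_3)\nu_{\eta_{12}\cap\R}(dx_4)$. Using~\eqref{eq:gf} and the commutation argument of Lemma~\ref{lem:cascade}, rewrite this measure as follows: first sample the outer boundary $\eta$ of the excursion of $\eta_{12}$ between $x_3$ and $x_4$, which should be $\wt\mu_{\hH,x_3,x_4}$. Then sample the remaining pieces of $\eta_{12}$ in $\cU(\eta)$ from $x_1$ to $x_3$ and from $x_4$ to $x_2$, which is a bichordal configuration in $\cU(\eta)$. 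Concretely, I would obtain this by approximation, exactly as in Lemma~\ref{lem:cascade}.
\begin{enumerate}[(i)]
\item Replace the pinned points by small intervals $I(x_3,\varepsilon),I(x_4,\varepsilon)$.
\item Use $\varepsilon^{-2h}$-renormalized hitting events, with the Minkowski content normalization from~\cite{zhan-boundary-gf} and Lemma~\ref{lem:mink}.
\item Use reversibility/symmetry of bichordal SLE~\cite[Proposition 6.10]{wu-hsle} and~\cite[Lemma 3.7]{wu-hsle} to swap the order of sampling.
\end{enumerate}
The total mass of the bichordal piece in $\cU(\eta)$ is the pure partition function of pattern (b). By~\cite{MW18}, this is $H_{\cU(\eta)}(x_1,x_2)^b\tau^{2b}f(\tau)/n$.

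\textbf{Step 3: fix the constant.} The constant $1/(-2\cos(4\pi/\kappa))$ can be fixed by consistency with Lemma~\ref{lem:cascade}. In the $\CLE$ picture, pattern (a) corresponds to a loop touching $x_3,x_4$ disjoint from the chord, and pattern (b) to the chord itself touching $x_3,x_4$. Resampling a loop as part of the chord costs a factor $n$. As a cross-check, take $\tau\to0$ and compare with the known $\tau\to0$ asymptotics of both sides, using the boundary two-point exponent and~\cite[Theorem 1.1]{fakhry2023}.

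The main obstacle is Step 2: rigorously decomposing the chordal SLE pinned at two boundary points into an $\SLE_\kappa(2)$ outer arc plus a bichordal pair in its complement, with the correct normalization. I would attack it first by the $\varepsilon$-approximation and the weak-limit identification of~\cite[Lemma 3.7]{wu-hsle}.
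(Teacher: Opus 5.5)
There is a genuine gap, located in Step 2. You claim that, under $\mu_{\hH,x_1,x_2}$ weighted by $\nu_{\eta_{12}\cap\R}(dx_3)\nu_{\eta_{12}\cap\R}(dx_4)$, the $x_3$--$x_4$ excursion has law $\wt\mu_{\hH,x_3,x_4}$ reweighted by the mass of a leftover ``bichordal piece''. You further claim that this mass is $H_{\cU(\eta)}(x_1,x_2)^b\tau^{2b}f(\tau)/n$, with $n=-2\cos(4\pi/\kappa)$, ``by \cite{MW18}''. Together these are essentially the lemma itself, and none of the inputs you name delivers them.

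\begin{itemize}
\item The Miller--Werner theorem is a connection-probability statement. For $\CLE_\kappa$ with two wired arcs it gives only the \emph{ratio} $f(\tau)/(nf(1-\tau))$ of the two link patterns, never an absolutely normalized partition function for either pattern.
\item The commutation tools (\cite[Lemma 3.7, Proposition 6.10]{wu-hsle}) concern two disjoint chords, i.e.\ pattern (a); that is exactly why they suffice for Lemma~\ref{lem:cascade}. They say nothing about a single chord forced through $x_4$ and then $x_3$.
\item Step 3 cannot close the gap. The factor $1/(-2\cos(4\pi/\kappa))$ is not a free constant: the relative weight depends on $\tau$, so ``resampling a loop costs $n$'' is not a derivation. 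A $\tau\to0$ comparison only matches exponents unless you already know the sharp constant in the behavior of $G_{\hH,x_1,x_2}(x_3,x_4)$ as $x_2\to x_1$, which is not among your inputs.
\end{itemize}

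There are also two slips. First, a chord from $x_1$ to $x_2$ hits $x_4$ before $x_3$, so the leftover pieces run $x_1\to x_4$ and $x_3\to x_2$; your pairing would cross. Second, the total mass in Step 1 must be $H_{\cU(\eta)}(x_1,x_2)^b\tau^{2b}\bigl(f(1-\tau)+f(\tau)/n\bigr)$, not $H_{\cU(\eta)}(x_1,x_2)^b\tau^{2b}$ times a constant, or pattern (a) would contradict Lemma~\ref{lem:cascade}.

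The missing idea is to realize both link patterns inside one random object, so that the known absolute normalization of pattern (a) is transported to pattern (b) by the Miller--Werner ratio.

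\begin{enumerate}
\item Take $\eta_{12}\sim\mu^\#_{\hH,x_1,x_2}$ together with independent $\CLE_\kappa$'s in the complementary components. This is a $\CLE_\kappa$ with wired arc $[x_1,x_2]$.
\item On the event that some element of it meets both $I(x_3,r_3)$ and $I(x_4,r_4)$, explore from $x_3+r_3$ and isolate the crossing segment $\zeta[\sigma',\sigma]$. Conditionally on this segment one has a $\CLE_\kappa$ with two wired arcs.
\item \cite[Theorem 1.1]{MW18} then gives $d\sm_{r_3,r_4}/d\sm'_{r_3,r_4}=f(\tau)/(nf(1-\tau))$, comparing ``the segment lies on $\eta_{12}$'' with ``it lies on a $\CLE_\kappa$ loop''.
\item After renormalizing by $r_3^{-h}r_4^{-h}$, the first event has mass $G_{\hH,x_1,x_2}(x_3,x_4)$ by \cite[Theorem 5.1]{zhan-boundary-gf}.
\item On the second event, the law of the segment converges by Lemma~\ref{lem:bubble-cle-variant}, with the constant fixed by Remark~\ref{rmk:poisson}. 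The commutation identity from the proof of Lemma~\ref{lem:cascade} rewrites the limit as $\frac{H_{\cU(\eta)}(x_1,x_2)^b}{H_{\hH}(x_1,x_2)^b}\tau^{2b}f(1-\tau)\,\wt\mu_{\hH,x_3,x_4}(d\eta)$.
\item Multiplying by the Radon--Nikodym derivative and comparing total masses gives the lemma.
\end{enumerate}

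This route never needs a pure partition function for pattern (b), nor any direct decomposition of the pinned chord.
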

\begin{proof}
Let $\eta_{12}$ be sampled from $\mu_{\hH,x_1,x_2}^\#$, and $\Gamma$ be sampled from an independent $\CLE_\kappa$ on each connected component of $\hH\setminus\eta_{12}$. Then $\wh\Gamma=\Gamma\cup\{\eta_{12}\}$ is a $\CLE_\kappa$ on $\hH$ with a wired boundary arc $[x_1,x_2]$. For $i=3,4$ and $r_i>0$, let $I(x_i,r_i)=(x_i-r_i,x_i+r_i)$. Denote $E_{r_3,r_4}$ to be the event that there exists an element in $\wh\Gamma$ that intersects both $I(x_3,r_3)$ and $I(x_4,r_4)$. On $E_{r_3,r_4}$, let $\zeta$ be the $\CLE_\kappa$ exploration interface of $\wh\Gamma$ from $x_3+r_3$ to $x_3-r_3$ up to the first time $\sigma$ it hits $I(x_4,r_4)$; see Figure~\ref{fig:exploration}. Let $\sigma'$ be the last time before $\sigma$ that $\zeta$ hits $I(x_3,r_3)$. Then conditioned on $\zeta[\sigma',\sigma]$, the restriction of $\wh\Gamma$ on $\cU(\zeta[\sigma',\sigma])$ is a $\CLE_\kappa$ on $\cU(\zeta[\sigma',\sigma])$ with two wired boundary arcs: $[x_1,x_2]$ and the outer boundary of $\zeta[\sigma',\sigma]$.

\begin{figure}[H]
\centering
\subfigure{
\includegraphics[width=0.45\textwidth]{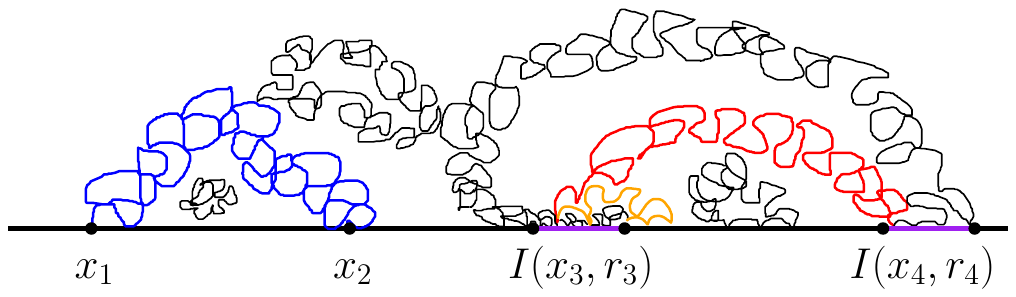}}
\hspace{15pt}
\subfigure{
\includegraphics[width=0.45\textwidth]{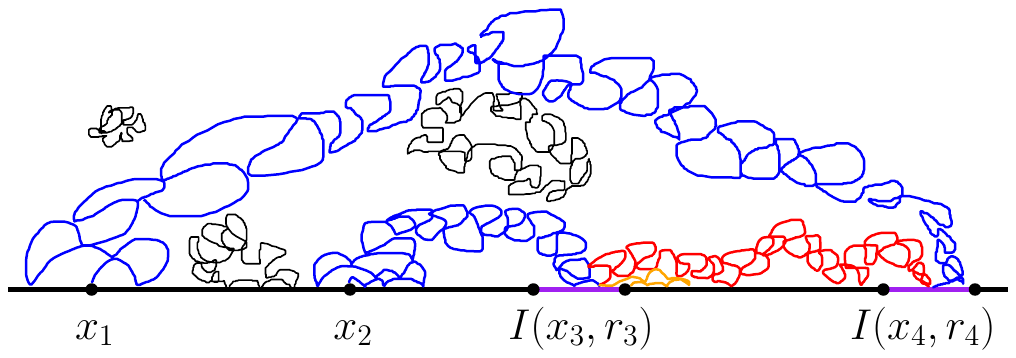}}
\caption{Illustration for the $\CLE_\kappa$ exploration interface $\zeta$ of $\wh\Gamma$. The segment $\zeta[\sigma,\sigma']$ is colored red, while $\zeta[0,\sigma']$ is in orange. \textbf{Left:} the event $E_{r_3,r_4}\setminus F_{r_3,r_4}$, and $\eta_{12}$ is colored blue. \textbf{Right:} the event $F_{r_3,r_4}$, and $\eta_{12}$ is the union of the blue and red curves.}
\label{fig:exploration}
\end{figure}

Let $F_{r_3,r_4}\subset E_{r_3,r_4}$ be the event that $\zeta[\sigma',\sigma]\subset\eta_{12}$. Note that given $E_{r_3,r_4}$, whether $F_{r_3,r_4}$ occurs or not gives a dichotomy of the two link patterns of $\wh\Gamma$. Let $\sm_{r_3,r_4}$ (resp.~$\sm'_{r_3,r_4}$) be the law of $\zeta[\sigma',\sigma]$ restricted on $F_{r_3,r_4}$ (resp.~$E_{r_3,r_4}\setminus F_{r_3,r_4}$). Then by~\cite[Theorem 1.1]{MW18}, $\sm_{r_3,r_4}$ and $\sm'_{r_3,r_4}$ are mutually absolutely continuous, with the Radon-Nikodym derivative
\begin{equation}\label{eq:mw-derivative}
\frac{d{\sm_{r_3,r_4}}}{d\sm_{r_3,r_4}'}(\eta)=
\frac{f(\tau)}{-2\cos\left(\frac{4\pi}{\kappa}\right) f(1-\tau)}.
\end{equation}
Here $\tau\in(0,1)$ is such that the $(\cU(\eta),x_1,x_2,x_3,x_4)$ is conformally equivalent to $(\hH,0,\tau,1,\infty)$.

On the other hand, note that $F_{r_3,r_4}$ implies $\eta_{12}$ intersecting both $I(x_3,r_3)$ and $I(x_4,r_4)$, while the intersection of the latter event and $E_{r_3,r_4}\setminus F_{r_3,r_4}$ yields a boundary three-arm event joining $I(x_3,r_3)$ and $I(x_4,r_4)$. Hence, by~\cite[Theorem 5.1]{zhan-boundary-gf}, we have $\lim_{r_3,r_4\to0}r_3^{-h}r_4^{-h}\P[F_{r_3,r_4}]=G_{\hH,x_1,x_2}(x_3,x_4)$. Furthermore, according to Lemma~\ref{lem:bubble-cle-variant} and conformal covariance, $r_3^{-h}r_4^{-h}\sm'_{r_3,r_4}$ weakly converges to $\int\mu_{\cU(\eta_{12}),x_3,x_4}(\cdot)\mu_{\hH,x_1,x_2}^\#(d\eta_{12})$\footnote{Here, using Lemma~\ref{lem:bubble-cle-variant} seems to involve some unspecified constant in the limiting measure. However, such constant is indeed fixed according to Remark~\ref{rmk:poisson}: by conformal covariance, we have $\lim_{r_4\to0}\lim_{r_3\to0}r_3^{-h}r_4^{-h}\P[E_{r_3,r_4}\setminus F_{r_3,r_4}]=\int H_{\cU(\eta_{12})}(x_3,x_4)^h\mu_{\hH,x_1,x_2}^\#(d\eta_{12})$.} (here the integration is taken over $\mu_{\hH,x_1,x_2}^\#(d\eta_{12})$) as $r_3\to0$ then $r_4\to0$.
Combined with~\eqref{eq:mw-derivative},
we find that $r_3^{-h}r_4^{-h}\sm_{r_3,r_4}$ weakly converges to the measure
\[
\int\frac{f(\tau)}{-2\cos\left(\frac{4\pi}{\kappa}\right) f(1-\tau)}\mu_{\cU(\eta_{12}),x_3,x_4}(\cdot)\mu_{\hH,x_1,x_2}^\#(d\eta_{12})=\frac{1}{-2\cos\left(\frac{4\pi}{\kappa}\right)} \frac{H_{\cU(\eta)}(x_1,x_2)^b}{H_{\hH}(x_1,x_2)^b}\tau ^{2b}f(\tau)\wt\mu_{\hH,x_3,x_4}(d\eta).
\]
Here to the right side we use Lemma~\ref{lem:cascade}. Combined with $r_3^{-h}r_4^{-h}|\sm_{r_3,r_4}|\to G_{\hH,x_1,x_2}(x_3,x_4)$, the result then follows.
\end{proof}

By conformal covariance, for any $\eta_{12}$ joining $x_1$ and $x_2$, Lemmas~\ref{lem:cascade} and~\ref{lem:cascade-2} imply
\begin{align}
\int H_{\cU(\eta')}(x_3,x_4)^h\mu^\#_{\cU(\eta_{12}),x_1,x_2}(d\eta')&=\int \tau^{2b}f(1-\tau)\Phi_{x_1,x_2}^b \wt\mu_{\cU(\eta_{12});x_3,x_4}(d\eta_{34}),\label{eq:12-34-cov}\\
G_{\cU(\eta_{12}),x_1,x_2}(x_4,x_3)
&=\frac{1}{-2\cos\left(\frac{4\pi}{\kappa}\right)}\int \tau^{2b}f(\tau)\Phi_{x_1,x_2}^b \wt\mu_{\cU(\eta_{12});x_3,x_4}(d\eta_{34}).\label{eq:1234-cov}
\end{align}
Here $\tau\in(0,1)$ is such that the unbounded connected component $R(\eta_{12},\eta_{34})$ of $\hH\setminus(\eta_{12}\cup\eta_{34})$, with four marked points $x_1,x_2,x_3,x_4$, is conformally equivalent to $(\hH,0,\tau,1,\infty)$; and $\Phi_{x_1,x_2}$ denotes the probability that the Brownian excursion on $\cU(\eta_{12})$ from $x_1$ to $x_2$ does not exit $R(\eta_{12},\eta_{34})$. Formally, we have $\Phi_{x_1,x_2}=\frac{H_{R(\eta_{12},\eta_{34})(x_1,x_2)}}{H_{\cU(\eta_{12})}(x_1,x_2)}=|\phi'(x_1)\phi'(x_2)|$, where $\phi:R(\eta_{12},\eta_{34})\to\cU(\eta_{12})$ is any conformal map fixing $x_1$ and $x_2$.

Combined with Lemma~\ref{lem:re-express}, we then have the following corollary for $G^{(1234)}(x_1,x_2,x_3,x_4)$ and $G^{(12)(34)}(x_1,x_2,x_3,x_4)$.
\begin{corollary}\label{cor:12-34}
For $x_1<x_2<x_3<x_4$, recall $f(x)$ defined in~\eqref{eq:f(x)}. Then
\begin{align}
&G^{(12)(34)}(x_1,x_2,x_3,x_4)=\int \tau^{2b}f(1-\tau)\Phi_{x_1,x_2}^b \wt\mu_{\cU(\eta_{12});x_3,x_4}(d\eta_{34})\wt \mu_{\hH,x_1,x_2}(d\eta_{12}),\label{eq:12-34-final}\\
&G^{(1234)}(x_1,x_2,x_3,x_4)=\frac{1}{-2\cos\left(\frac{4\pi}{\kappa}\right)}\int \tau^{2b}f(\tau)\Phi_{x_1,x_2}^b \wt\mu_{\cU(\eta_{12});x_3,x_4}(d\eta_{34})\wt \mu_{\hH,x_1,x_2}(d\eta_{12}).\label{eq:1234-final}
\end{align}
Here $\tau$ and $\Phi_{x_1,x_2}$ are defined as above.
\end{corollary}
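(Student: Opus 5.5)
The plan is to substitute the conformally covariant identities~\eqref{eq:12-34-cov} and~\eqref{eq:1234-cov} into the two formulas of Lemma~\ref{lem:re-express}, with the curve $\eta_{12}$ fixed.

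For~\eqref{eq:12-34-final}, start from~\eqref{eq:re-express-2}. The inner integral is $\int H_{\cU(\eta')}(x_3,x_4)^h\mu^\#_{\cU(\eta_{12}),x_1,x_2}(d\eta')$, which is exactly the left side of~\eqref{eq:12-34-cov}. Replacing it by the right side of~\eqref{eq:12-34-cov} and then integrating against $\wt\mu_{\hH,x_1,x_2}(d\eta_{12})$ gives~\eqref{eq:12-34-final}.

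For~\eqref{eq:1234-final}, start from~\eqref{eq:re-express-1} and replace $G_{\cU(\eta_{12}),x_1,x_2}(x_4,x_3)$ by the right side of~\eqref{eq:1234-cov}. The constant $\frac{1}{-2\cos(4\pi/\kappa)}$ is independent of $\eta_{12}$, so it factors out of the outer integral.

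The justification of~\eqref{eq:12-34-cov} and~\eqref{eq:1234-cov} themselves goes as follows.
\begin{enumerate}
\item Fix $\eta_{12}$ and map $\cU(\eta_{12})$ to $\hH$ by a conformal map $\psi$. Apply Lemma~\ref{lem:cascade} and Lemma~\ref{lem:cascade-2} in $\hH$ to the marked points $\psi(x_i)$.
\item In Lemma~\ref{lem:cascade-2}, divide both sides by $H_{\hH}(x_1,x_2)^b$, converting $H_{\cU(\eta)}(x_1,x_2)^b/H_{\hH}(x_1,x_2)^b$ into the excursion ratio $\Phi_{x_1,x_2}^b$. The same division turns the left side of Lemma~\ref{lem:cascade} from a $\mu$-integral into a $\mu^\#$-integral.
\item Pull everything back by $\psi$. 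Three facts make this work.
\begin{itemize}
\item The ratio $\Phi_{x_1,x_2}$ is conformally invariant.
\item The cross-ratio $\tau$ of the domain $R(\eta_{12},\eta_{34})$ is conformally invariant.
\item The measure $\wt\mu$ and the Green's function $G$ transform with the same factor $|\psi'(x_3)\psi'(x_4)|^h$. This follows from the definition $\wt\mu_{D,x,y}=H_D(x,y)^h\wt\mu^\#_{D,x,y}$ and the conformal covariance of $G_{D,x_1,x_2}$.
\end{itemize}
These factors therefore match on both sides.
\end{enumerate}

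The only point needing care is the reversal of $x_3,x_4$ in $G_{\cU(\eta_{12}),x_1,x_2}(x_4,x_3)$. It is harmless because the Green's function is symmetric in its arguments.

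Integrability also needs a word, so that the order of integration may be exchanged. All integrands are nonnegative, so Tonelli's theorem applies. Finiteness then follows from Proposition~\ref{prop:cle-gf-to-bubble}.
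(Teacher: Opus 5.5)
Your proposal is correct and follows the paper's argument: the paper derives \eqref{eq:12-34-cov} and \eqref{eq:1234-cov} from Lemmas~\ref{lem:cascade} and~\ref{lem:cascade-2} by conformal covariance, after dividing out $H_{\hH}(x_1,x_2)^b$ so that only the conformally invariant quantities $\tau$, $\Phi_{x_1,x_2}$ and $\mu^\#$ remain. It then substitutes these identities into Lemma~\ref{lem:re-express}. Your extra remarks on the symmetry of the Green's function in $(x_3,x_4)$ and on Tonelli's theorem spell out details the paper leaves implicit.
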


Now we are ready to prove Proposition~\ref{prop:identify-v0}.
\begin{proof}[Proof of Proposition~\ref{prop:identify-v0}]
Recall that $x_1<x_2<x_3<x_4$. By Corollary~\ref{cor:12-34}, we have
\begin{equation}\label{eq:mix-expression}
G^{(1234)}(x_1,x_2,x_3,x_4)+G^{(12)(34)}(x_1,x_2,x_3,x_4)=\int \tau^{2b}Z(\tau) \Phi_{x_1,x_2}^b\wt\mu_{\cU(\eta_{12});x_3,x_4}(d\eta_{34})\wt \mu_{\hH,x_1,x_2}(d\eta_{12}),
\end{equation}
where $\tau$ and $\phi$ are defined in Corollary~\ref{cor:12-34}, and
\begin{equation}\label{eq:z-tau}
Z(\tau)=f(1-\tau)+\frac{1}{-2\cos\left(\frac{4\pi}{\kappa}\right)}f(\tau)=\tau^{-2b}(1+O(\tau^2)).
\end{equation}
We emphasize that the error term $O(\tau^2)$ in~\eqref{eq:z-tau} is crucial to our proof. As we noted in Section~\ref{sec:discussion}, the conjectural partition functions of general multichordal $\CLE_\kappa$ also exhibit the same rapid decay of subleading terms~\cite[Lemma 6.1]{feng2024multiplesle}.

In the following, we use~\eqref{eq:mix-expression} and~\eqref{eq:z-tau} to show that as $x_2\to x_1$ (with $x_1,x_3,x_4$ fixed), the summation of Green's function $G^{(1234)}(x_1,x_2,x_3,x_4)+G^{(12)(34)}(x_1,x_2,x_3,x_4)$ is asymptotically equal to $ H_{\hH}(x_1,x_2)^h H_{\hH}(x_3,x_4)^h(1+o(|x_2-x_1|^h))$. To this end, we choose $c_1,c_2$ such that\[
(\frac{24}{\kappa}-2)(1-c_1)>h,\ (\frac{24}{\kappa}-2)c_2>h,\ 2(c_1-c_2)>h\ \text{with}\ c_1>\frac{1}{2}>c_2.
\]
This is always possible when $\kappa\in(4,8)$. Let $|x_2-x_1|=\varepsilon$, and $E$ be the event that ${\rm diam}(\eta_{12})\le\varepsilon^{c_1}$ while ${\rm dist}(x_1,\eta_{34})\ge\varepsilon^{c_2}$. Note that on the event $E$, by basic conformal distortion estimates, we have $\tau=O(\varepsilon^{c_1-c_2})$ and $\Phi_{x_1,x_2}=1+O(\varepsilon^{2(c_1-c_2)})$. Meanwhile, the conformal restriction property of $\SLE_\kappa(2)$ (see~\cite{dubedat-rho}) gives that for $(\eta_{12},\eta_{34})\in E$, we have the Radon-Nikodym derivative $\frac{d\wt\mu_{\cU(\eta_{12});x_3,x_4}}{d\wt\mu_{\hH;x_3,x_4}}[\eta_{34}]=1+O(\varepsilon^{2(c_1-c_2)})$. The boundary one-point estimate for $\SLE_\kappa(2)$ (see~\cite[Theorem 4.1]{zhan-boundary-gf}) gives
\[
\wt\mu_{\hH;x_1,x_2}^\#[{\rm diam}(\eta_{12})>\varepsilon^{c_1}]=O(\varepsilon^{(\frac{24}{\kappa}-2)(1-c_1)}),\quad \wt\mu_{\hH;x_3,x_4}^\#[{\rm dist}(x_1,\eta_{34})<\varepsilon^{c_2}]=O(\varepsilon^{(\frac{24}{\kappa}-2)c_2}),
\]
where $\frac{24}{\kappa}-2=3h+1$ corresponds to the boundary three-arm exponent for $\SLE_\kappa$. Thus,
\begin{align*}
\wt\mu_{\hH;x_1,x_2}[{\rm diam}(\eta_{12})\le\varepsilon^{c_1}]&=H_{\hH}(x_1,x_2)^h(1+O(\varepsilon^{(\frac{24}{\kappa}-2)(1-c_1)})),\\
\wt\mu_{\hH;x_3,x_4}[{\rm dist}(x_1,\eta_{34})\ge\varepsilon^{c_2}]&=H_{\hH}(x_3,x_4)^h(1+O(\varepsilon^{(\frac{24}{\kappa}-2)c_2})).
\end{align*}
Combined with~\eqref{eq:z-tau}, we have
\begin{equation}\label{eq:main-part}
\int_E \tau^{2b}Z(\tau)\Phi_{x_1,x_2}^b \wt\mu_{\cU(\eta_{12});x_3,x_4}(d\eta_{34})\wt \mu_{\hH,x_1,x_2}(d\eta_{12})= H_{\hH}(x_1,x_2)^hH_{\hH}(x_3,x_4)^h(1+o(\varepsilon^h)).
\end{equation}

We now consider the complement $E^c$ of the event $E$. First, note that by~\eqref{eq:12-34-cov},
\begin{align*}
&\int_{{\rm diam}(\eta_{12})>\varepsilon^{c_1}} \tau^{2b}f(1-\tau)\Phi_{x_1,x_2}^b \wt\mu_{\cU(\eta_{12});x_3,x_4}(d\eta_{34})\wt \mu_{\hH,x_1,x_2}(d\eta_{12})\\
&=\int_{{\rm diam}(\eta_{12})>\varepsilon^{c_1}} H_{\cU(\eta')}(x_3,x_4)^h\mu^\#_{\cU(\eta_{12}),x_1,x_2}(d\eta')\wt\mu_{\hH,x_1,x_2}(d\eta_{12})\\
&\le H_{\hH}(x_3,x_4)^hH_{\hH}(x_1,x_2)^h\wt\mu_{\hH,x_1,x_2}^\#[{\rm diam}(\eta_{12})>\varepsilon^{c_1}]= H_{\hH}(x_1,x_2)^hH_{\hH}(x_3,x_4)^ho(\varepsilon^h)
\end{align*}
due to the monotonicity of the boundary Poisson kernel and the same boundary 1-point estimate for $\SLE_\kappa(2)$ from~\cite{zhan-boundary-gf} as above. The event ${\rm dist}(x_1,\eta_{34})>\varepsilon^{c_2}$ is similar. Thus, we have
\begin{equation}\label{eq:error-12,34}
\int_{E^c} \tau^{2b}f(1-\tau)\Phi_{x_1,x_2}^b \wt\mu_{\cU(\eta_{12});x_3,x_4}(d\eta_{34})\wt \mu_{\hH,x_1,x_2}(d\eta_{12})= H_{\hH}(x_1,x_2)^hH_{\hH}(x_3,x_4)^ho(\varepsilon^h).
\end{equation}

It remains to deal with $\int_{E^c} \tau^{2b}f(\tau)\Phi_{x_1,x_2}^b \wt\mu_{\cU(\eta_{12});x_3,x_4}(d\eta_{34})\wt \mu_{\hH,x_1,x_2}(d\eta_{12})$. By~\eqref{eq:1234-cov}, we have
\begin{align*}
&\frac{1}{-2\cos\left(\frac{4\pi}{\kappa}\right)}\int_{{\rm diam}(\eta_{12})>\varepsilon^{c_1}} \tau^{2b}f(\tau)\Phi_{x_1,x_2}^b \wt\mu_{\cU(\eta_{12});x_3,x_4}(d\eta_{34})\wt \mu_{\hH,x_1,x_2}(d\eta_{12})\\
&=\int_{{\rm diam}(\eta_{12})>\varepsilon^{c_1}}G_{\cU(\eta_{12}),x_1,x_2}(x_4,x_3)\wt \mu_{\hH,x_1,x_2}(d\eta_{12}),
\end{align*}
where $G_{D,a,b}(\cdot,\cdot)$ stands for the boundary two-point Green's function for chordal $\SLE_\kappa$ on $D$ from $a$ to $b$. Fix $\delta_0=\frac{1}{10}\min(|x_3-x_1|,|x_4-x_3|)$.
Let $0<\delta<\delta_0$ and $0\le k_1,k_2\le K:=\lfloor\log_2\frac{\delta_0}{\delta}\rfloor$. Denote
\[
F_{k_1,k_2}:=\{{\rm dist}(x_3,\eta_{12})\in[2^{k_1}\delta,2^{k_1+1}\delta]\text{ and } {\rm dist}(x_4,\eta_{12})\in[2^{k_2}\delta,2^{k_2+1}\delta]\}.
\]
We claim that on the event $F_{k_1,k_2}$,
\begin{equation}\label{eq:gf-estimate}
G_{\cU(\eta_{12}),x_1,x_2}(x_4,x_3)\le C (2^{k_1}\delta)^{-h}(2^{k_2}\delta)^{-h}.
\end{equation}
Here and after, $C>0$ is some constant depending on $x_1,x_3,x_4$ and can vary from line to line. To see~\eqref{eq:gf-estimate}, first note that for any compact hull $A$, $A\cap\R=[a,b]$, $x>b$ such that ${\rm dist}(x,A)>d_1>0$, for the conformal map $\psi:\hH\setminus A\to\hH$ with $\psi(a)=0$, $\psi(b)=\infty$ and $\psi(x)=1$, we have $|\psi'(x)|\le 4d_1^{-1}$ by Koebe's 1/4 theorem. By conformal covariance, this implies the boundary one-point Green's function $G_{\hH\setminus A,a,b}(x)\le C d_1^{-h}$. Now suppose $b<y<x$, ${\rm dist}(y,A)>d_2>0$ and $|x-y|>d_1+d_2$. According to the martingale property of the boundary two-point Green's function (see~\cite[Theorem 4.1]{fakhry2023}), we have
\[
G_{\hH\setminus A,a,b}(x,y)=G_{\hH\setminus A,a,b}(x)\E_x[G_{U^*,x,b}(y)]
\]
here $\E_x$ is with respect to the conditional probability measure $\P_x$ of the chordal $\SLE_\kappa$ curve $\eta$ on $\hH\setminus A$ from $a$ to $b$ conditioned to hit $x$ (we denote this hitting time by $\sigma$), and $U^*$ is the connected component of $\hH\setminus(A\cup\eta[0,\sigma])$ such that $b$ is on its boundary. Furthermore, following the estimate in~\cite[Lemma 3.5]{fakhry2023}, for $s\in(0,1)$, the probability that a chordal $\SLE_\kappa$ on $\hH$ from $0$ to $\infty$ hits $\partial B(1,\frac{1}{10})$ after hitting $B(1,\frac{1}{10}s)$ and before disconnecting $1$ from $\infty$ is bounded above by $Cs^{2h}$. The domain Markov property of $\SLE_\kappa$ (see e.g.~\cite[Eq.(3.27)]{fakhry2023}) then implies $\P_x[{\rm dist}(U^*,y)<r d]\le Cr^{2h}$ for $r\in(0,\frac{1}{10})$. Consequently, we obtain $G_{\hH\setminus A,a,b}(x,y)\le Cd_1^{-h}d_2^{-h}$, thus proving~\eqref{eq:gf-estimate}.

Applying the boundary two-point estimate~\cite[Lemma 5.2]{zhan-boundary-gf} for $\SLE_\kappa(2)$ gives 
\[
\wt \mu_{\hH,x_1,x_2}^\#[F_{k_1,k_2}]\le C(\varepsilon 2^{k_1}\delta)^{\frac{24}{\kappa}-2}(2^{k_2}\delta)^{\frac{24}{\kappa}-2},\quad 1\le k_1,k_2\le K, 
\]
and $\wt \mu_{\hH,x_1,x_2}^\#[{\rm diam}(\eta_{12})>\varepsilon^{c_1}]\le C\varepsilon^{(\frac{24}{\kappa}-2)(1-c_1)}$ as above. Also note that on the event that ${\rm dist}(x_3,\eta_{12})>\delta_0$ and ${\rm dist}(x_4,\eta_{12})>\delta_0$, $G_{\cU(\eta_{12}),x_1,x_2}(x_4,x_3)$ is bounded above by some constant $C$. Combining these with~\eqref{eq:gf-estimate}, we find
\begin{align*}
&\int_{{\rm diam}(\eta_{12})>\varepsilon^{c_1},{\rm dist}(x_3,\eta_{12})>\delta,{\rm dist}(x_4,\eta_{12})>\delta}G_{\cU(\eta_{12}),x_1,x_2}(x_4,x_3)\wt \mu_{\hH,x_1,x_2}(d\eta_{12})\\
&\le C H_{\hH}(x_1,x_2)^h\left(\sum_{k_1,k_2=0}^{K}\mu_{\hH,x_1,x_2}^\#[F_{k_1,k_2}] (2^{k_1}\delta)^{-h}(2^{k_2}\delta)^{-h}+\wt \mu_{\hH,x_1,x_2}^\#[{\rm diam}(\eta_{12})>\varepsilon^{c_1}]\right)\\
&\le C H_{\hH}(x_1,x_2)^h\left(\sum_{k_1,k_2=0}^{K}\varepsilon^{\frac{24}{\kappa}-2}(2^{k_1}\delta)^{\frac{16}{\kappa}-1}(2^{k_2}\delta)^{\frac{16}{\kappa}-1}+\varepsilon^{(\frac{24}{\kappa}-2)(1-c_1)}\right)\\
&\le CH_{\hH}(x_1,x_2)^h\varepsilon^{(\frac{24}{\kappa}-2)(1-c_1)}= H_{\hH}(x_1,x_2)^ho(\varepsilon^h).
\end{align*}
In particular, the error term $o(\varepsilon^h)$ does not depend on $\delta$. Since $\delta$ is arbitrary, this gives
\[
\int_{{\rm diam}(\eta_{12})>\varepsilon^{c_1}}G_{\cU(\eta_{12}),x_1,x_2}(x_4,x_3)\wt \mu_{\hH,x_1,x_2}(d\eta_{12})= H_{\hH}(x_1,x_2)^ho(\varepsilon^h).\]
The case ${\rm dist}(x_1,\eta_{34})>\varepsilon^{c_2}$ follows similarly by considering $\int_{{\rm dist}(x_1,\eta_{34})>\varepsilon^{c_2}}G_{\cU(\eta_{34}),x_4,x_3}(x_1,x_2)\wt \mu_{\hH,x_3,x_4}(d\eta_{34})$, and dividing dyadic scales at $x_1,x_2$. Therefore, we obtain
\begin{equation}\label{eq:error-1234}
\frac{1}{-2\cos\left(\frac{4\pi}{\kappa}\right)}\int_{E^c} \tau^{2b}f(\tau)\Phi_{x_1,x_2}^b \wt\mu_{\cU(\eta_{12});x_3,x_4}(d\eta_{34})\wt \mu_{\hH,x_1,x_2}(d\eta_{12})= H_{\hH}(x_1,x_2)^ho(\varepsilon^h).
\end{equation}

Combining~\eqref{eq:main-part},~\eqref{eq:error-12,34} and~\eqref{eq:error-1234}, we find
\begin{equation}\label{eq:estimation-final}
\int \tau^{2b}Z(\tau)\Phi_{x_1,x_2}^b \wt\mu_{\cU(\eta_{12});x_3,x_4}(d\eta_{34})\wt \mu_{\hH,x_1,x_2}(d\eta_{12})= H_{\hH}(x_1,x_2)^hH_{\hH}(x_3,x_4)^h(1+o(\varepsilon^h))
\end{equation}
as $\varepsilon=|x_2-x_1|\to0$ (with $x_1,x_3,x_4$ fixed). The result then follows from~\eqref{eq:mix-expression} and~\eqref{eq:estimation-final}.
\end{proof}

\subsection{Proof of Theorems~\ref{thm:percolation} and~\ref{thm:fk}}\label{sec:kappa=6}\label{sec:kappa=16/3}

Theorems~\ref{thm:percolation} and~\ref{thm:fk} now follow as consequences of Theorems~\ref{thm:ode} and~\ref{thm:identify} when $\kappa=6$ and $\kappa=\frac{16}{3}$.
\begin{proof}[Proof of Theorem~\ref{thm:percolation}]
Taking $\kappa=6$ into~\eqref{eq:ode}, we obtain
\begin{equation}\label{eq:ode-6}
9\lambda^2(1-\lambda)^2U'''+ 6\lambda(1-\lambda)(1-2\lambda)U''+ (8\lambda(1-\lambda)-6)U'+4(2\lambda-1)U =0.
\end{equation}
Then~\eqref{eq:ode-6} has three linearly independent solutions $V_0(\lambda)$, $V_2(\lambda)$ and $V_{\frac{1}{3}}(\lambda)$, which are defined in Section~\ref{sec:intro-perc}. See also~\cite[Appendix C]{Gori:2018gqx}. In particular, $V_0(\lambda)=V_0(1-\lambda)$, and as $\lambda\to0$,
\[
V_0(\lambda)=V_0(1-\lambda)=1-\frac{2}{3}\lambda+O(\lambda^2|\log\lambda|),\quad V_2(\lambda)=O(\lambda^2),\quad V_{2}(1-\lambda)=C_0+O(\lambda^{\frac{1}{3}})
\]
for some constant $C_0\in(0,\infty)$. Therefore, by Theorem~\ref{thm:identify}, we have $U^{\rm total}(\lambda)=V_0(\lambda)$, $U^{(14)(23)}(\lambda)=V_2(\lambda)$ and $U^{(12)(34)}(\lambda)=V_2(1-\lambda)$. Namely, for some $C,C'\in(0,\infty)$,
\begin{equation*}
\begin{aligned}
G^{\rm total}(x_1,x_2,x_3,x_4)&=C\left(\frac{(x_4-x_2)(x_3-x_1)}{(x_2-x_1)(x_4-x_3)(x_3-x_2)(x_4-x_1)}\right)^{\frac{2}{3}}V_0(\lambda),\\
G^{(14)(23)}(x_1,x_2,x_3,x_4)&=C'\left(\frac{(x_4-x_2)(x_3-x_1)}{(x_2-x_1)(x_4-x_3)(x_3-x_2)(x_4-x_1)}\right)^{\frac{2}{3}}V_2(\lambda).
\end{aligned}
\end{equation*}
On the other hand, standard discrete argument (see Proposition~\ref{prop:discrete}) shows that $P^p(x_1,x_2,x_3,x_4)$ agrees with $G^p(x_1,x_2,x_3,x_4)$ for $p\in\{(1234),(12)(34),(14)(23),{\rm total}\}$ up to a multiplicative constant. The coefficient $A=\frac{8\sqrt{3}\,\pi \sin\left(\frac{2\pi}{9}\right)}{135\cos\left(\frac{5\pi}{18}\right)}$ is determined by $\lim_{x_2\to x_3}\frac{P^{(14)(23)}(x_1,x_2,x_3,x_4)}{P^{\rm total}(x_1,x_2,x_3,x_4)}=1$. We conclude.
\end{proof}

\begin{remark}
For $\kappa=6$, the identification of $U^{\rm total}$ with $V_0(\lambda)$ can also be obtained via analyzing the asymptotic behavior of $P^{\rm total}(x_1,x_2,x_3,x_4)$ as $|x_1-x_2|\to0$ from the discrete side; see~\cite[Theorem 1.9]{cf2025}. Indeed,~\cite[Theorem 1.9]{cf2025} gives that
\[
P^{\rm total}(x_1,x_2,x_3,x_4)\propto|x_1-x_2|^{-\frac{2}{3}}(1+O(|x_1-x_2|^2|\log|x_1-x_2||)),
\]
as $x_2\to x_1$ (with $x_1,x_3,x_4$ fixed). This provides an alternative proof of Proposition~\ref{prop:identify-v0} for $\kappa=6$.
\end{remark}

\begin{proof}[Proof of Theorem~\ref{thm:fk}]
Taking $\kappa=\frac{16}{3}$ into~\eqref{eq:ode}, we obtain
\begin{equation}\label{eq:ode-16/3}
4\lambda^2(1-\lambda)^2U'''-3(\lambda^2-\lambda+1)U'+3(2\lambda-1)U=0. 
\end{equation}
For~\eqref{eq:ode-16/3}, we have a special solution $V_0(\lambda)=1-\lambda+\lambda^2$. Let $U(\lambda)=V_0(\lambda)\int_0^{\lambda}g(x)dx$, then~\eqref{eq:ode-16/3} yields a second-order ODE for $g$, whose solutions involve hypergeometric functions. As a result,
$V_{5/2}(\lambda)=V_0(\lambda)\int_0^{\lambda}g(x)dx$, where
\begin{equation*}
g(x) = -\frac{(1 - x)^{3/2} \left( (2 - 4x) \, {}_2F_1\!\left(\frac{3}{2}, \frac{7}{2}; 3; 1 - x\right) - 3x(1 - x) \, {}_2F_1\!\left(\frac{5}{2}, \frac{9}{2}; 4; 1 - x\right) \right) x^{3/2}}{2\left(x + (1 - x)^2\right)^2},
\end{equation*}
as in~\eqref{eq:fk-g}.
Note that $g(x)=x^\frac{3}{2}(1+O(x))$ as $x\to0$ (hence $V_{5/2}(\lambda)=\lambda^{\frac{5}{2}}(1+O(\lambda))$ as $\lambda\to0$). Since $V_0(\lambda)=V_0(1-\lambda)$, by Theorem~\ref{thm:identify}, we have for some $C,C'\in(0,\infty)$,
\begin{equation*}
\begin{aligned}
G^{(14)(23)}(x_1,x_2,x_3,x_4)&=C\left(\frac{(x_4-x_2)(x_3-x_1)}{(x_2-x_1)(x_4-x_3)(x_3-x_2)(x_4-x_1)}\right)V_{5/2}(\lambda).\\
G^{\rm total}(x_1,x_2,x_3,x_4)&=C'\left(\frac{(x_4-x_2)(x_3-x_1)}{(x_2-x_1)(x_4-x_3)(x_3-x_2)(x_4-x_1)}\right) V_0(\lambda).
\end{aligned}
\end{equation*}
Similar to the Bernoulli percolation case (see the end of Appendix~\ref{appendix:discrete}), $P^p_{\rm FK}(x_1,x_2,x_3,x_4)$ agrees with $G^p(x_1,x_2,x_3,x_4)$ for $p\in\{(1234),(12)(34),(14)(23),{\rm total}\}$ up to a multiplicative constant. Therefore, the universal ratio $R_{\rm FK}(\lambda)$ is
\begin{equation*}
R_{\rm FK}(\lambda)=A_{\rm FK}\frac{V_{5/2}(\lambda)}{V_{0}(\lambda)}=A_{\rm FK}\int_0^{\lambda}g(x)dx
\end{equation*}
where $A_{\rm FK}=(\int_0^{1}g(x)dx)^{-1}\simeq1.19948$ is such that $R_{\rm FK}(1)=1$, as desired.
\end{proof}
\begin{remark}
According to the Edwards-Sokal coupling, note that $P_{\rm FK}^{\rm total}(x_1,x_2,x_3,x_4)$ equals the boundary four-point spin correlation of the critical Ising model with free boundary condition, which can be formally obtained by taking limits of the bulk spin correlations derived in~\cite{CHI15}. This also explains the reason that $G^{\rm total}(x_1,x_2,x_3,x_4)$ has a rather simple form when $\kappa=\frac{16}{3}$.
\end{remark}

\subsection{Solutions of~\eqref{eq:ode} for other special $\kappa$}\label{sec:kappa-le-4}

In this section we discuss solutions of~\eqref{eq:ode} for other special $\kappa$'s.

\begin{itemize}
    \item $\kappa=\frac{24}{5}$. This corresponds to the conjectural scaling limit of 3-Potts model. In this case,~\eqref{eq:ode} has rather simple solutions: we have $V_0(\lambda)=1-\frac{4}{3}\lambda+\frac{4}{3}\lambda^2=V_0(1-\lambda)$, $V_{2/3}(\lambda)=\lambda^{2/3}(1-\lambda+\frac{3}{4}\lambda^2)$, and $V_{3}(\lambda)=V_0(\lambda)-\frac{4}{3}V_{2/3}(1-\lambda)$. See~\cite[Section 4]{Gori:2018gqx}. Thus, according to Theorem~\ref{thm:identify}, we have $U^{(14)(23)}\propto V_3$, and $U^{\rm total}\propto V_0$.
\item $\kappa=8$. This corresponds to the scaling limit of the uniform spanning tree (UST, with free boundary condition). The general solution of~\eqref{eq:ode} now is $ U(\lambda) = c_1 + c_2 |\ln(1-\lambda)| + c_3 |\ln\lambda|$. The constant solution is clearly due to that every four boundary points are connected in the UST. We conjecture that the remaining solutions could be viewed e.g.~as the partition function of two trees such that their union spans all vertices, with $x_1,x_2$ connected in one tree and $x_3,x_4$ connected in the other tree.
\end{itemize}

We expect that~\eqref{eq:ode} also makes sense for $\kappa\in(0,4]$. Let $\SLE_{\kappa,\hH}^\lp$ be the $\SLE_\kappa$ loop measure~\cite{zhan-loop-measures} on $\hH$. Then the solutions of~\eqref{eq:ode} would be the following counterparts of~\eqref{eq:bub-2} and~\eqref{eq:bub-3}: $G^{(1234)}$ corresponds to the boundary four point Green's function of the $\SLE_\kappa$ loop measure, defined via the $\varepsilon\to0$ limit of $\varepsilon^{-4h}\SLE_{\kappa,\hH}^\lp[\ell\cap B(x_i,\varepsilon)\neq\emptyset]$ (assuming the limit exists); $G^{(12)(34)}$ corresponds to the integral $\int H_{\cU(\gamma)}(x_3,x_4)^h\mu_{x_1,x_2}^\bub(d\gamma)$ , where $\mu_{x_1,x_2}^\bub$ is the $\SLE_\kappa$ bubble measure rooted at $x_1$ and $x_2$~\eqref{eq:bubble-two-pinned}, and $h=\frac{8}{\kappa}-1\ge1$. By conformal covariance, we can also introduce the function $U$'s as in~\eqref{eq:u-lambda}. Of course, it needs extra effort to establish the existence of these quantities, and show that they satisfy~\eqref{eq:ode}. Furthermore, it requires the $\kappa\in(0,4]$ counterpart of Proposition~\ref{prop:identify-v0} in order to recover the Green's functions from linear combinations of these solutions. Here we include solutions of~\eqref{eq:ode} for several $\kappa\in(0,4]$.

\begin{itemize}
\item $\kappa=4$. This was also included in~\cite[Section 4]{Gori:2018gqx}, as the conjectural scaling limit of 4-Potts model. The general solution is $U(\lambda)=c_1+c_2(\lambda-\frac{3}{2}\lambda^2+\lambda^3)+c_3\lambda^4$.
\item $\kappa=\frac{8}{3}$. This corresponds to partition functions of Brownian excursions.~\eqref{eq:ode} then reads as
\[
\lambda^2 (1-\lambda)^2 U''' -6 \lambda (1-\lambda)(1-2\lambda) U'' + [ 6 + 48 \lambda(\lambda-1) ] U' -24 (2\lambda-1) U =0.
\]
Note that $U_0=\lambda^2(1-\lambda)^2(1+\lambda^2+(1-\lambda)^2)$ solves the above ODE, which corresponds to the summation of the total mass of four Brownian excursions joining $0,\lambda,1,\infty$ in arbitrary orders such that the union forms a loop (i.e.~the $\kappa=\frac{8}{3}$ counterpart of $G^{(1234)}$). Taking $U=U_0\int_0^\lambda w(x)dx$ into the equation, we can then find the other two solutions
\[
U_1=\Bigg( 5\lambda^2 - 5\lambda - \frac{5}{(\lambda - 1)^2} - \frac{5}{\lambda - 1} - 24\ln(1-\lambda ) + \frac{7(-\lambda - 1)}{\lambda^2 - \lambda + 1} +7\Bigg)U_0,\quad U_2(\lambda)=U_1(1-\lambda).
\]
They correspond to $G^{(14)(23)}$ and $G^{(12)(34)}$, respectively, and have the following probabilistic interpretation. Let $e_1$ (resp.~$e_2$) be the union of \emph{two} independent Brownian excursions from $0$ to $\infty$ (resp.~from $\lambda$ to $1$) such that $e_1,e_2$ are also independent. Let $P(\lambda)$ be the probability that $e_1$ does not intersect with $e_2$. Then we have $P(\lambda)\propto \frac{U_1(\lambda)}{\lambda^4}$, i.e.
\[
P(\lambda)=-\frac{(1-\lambda)^2}{5\lambda^2}(\lambda^2-\lambda+1)\Bigg( 5\lambda^2 - 5\lambda - \frac{5}{(\lambda - 1)^2} - \frac{5}{\lambda - 1} - 24\ln(1-\lambda ) + \frac{7(-\lambda - 1)}{\lambda^2 - \lambda + 1} +7\Bigg).
\]
This expression can be proved similarly to Section~\ref{sec:fusion}, using the fact that the right boundary of $e_1$, which has the law of $\SLE_{8/3}(2)$ (see~\cite{lsw-restriction}), is the fusion limit of a pair of $\SLE_{8/3}$ from $0$ to $\infty$, as well as the half-plane intersection exponent $P(\lambda)=\lambda^{3+o(1)}$ as $\lambda\to0$~\cite{lsw-bm-exponents1}.
\item $\kappa=2$. This is related to the loop-erased random walk (LERW).
In this case, the three linearly independent solutions are $U_0(\lambda)=6\lambda^2 - 6\lambda + 1$, $U_1(\lambda)=\lambda^{10}(\lambda^2-6\lambda+6)$ and $U_2(\lambda)=U_1(1-\lambda)$.
We conjecture that $U_2(\lambda)$ gives the $\kappa=2$ counterpart of $G^{(12)(34)}$, while further efforts would be needed to identify $G^{(1234)}$ with a linear combination of the above solutions.
\end{itemize}

\section{One-Bulk and two-boundary connectivities}\label{sec:bulk-boundary}

Our framework also works for the one-bulk and two-boundary connectivities, which proves Theorem~\ref{thm:bulk}. Recall that for $x_1,x_2\in\R$ and $z\in\hH$, its one-bulk and two-boundary Green's function is defined in~\eqref{eq:def-gf-bulk}.
By Proposition~\ref{prop:touching-equals-bubble} and the conformal covariance of the Miller-Schoug measure,~\eqref{eq:def-gf-bulk} is equivalent to
\begin{equation}\label{eq:CR}
G(x_1,x_2,z)=C\int {\bf 1}_{z \text{ is surrounded by } \gamma} \CR(z,D_z(\gamma))^{-\alpha} \mu_{x_1,x_2}^\bub(d\gamma),
\end{equation}
for some $C\in(0,\infty)$. Here, $\mu_{x_1,x_2}^\bub(d\gamma)$ is the $\SLE_\kappa$ bubble measure rooted at $x_1$ and $x_2$~\eqref{eq:bubble-two-pinned}, $\CR(z,D_z(\gamma))$ is the conformal radius of the connected component $D_z(\gamma)$ of $\hH\setminus\gamma$ containing $z$, viewed at $z$, and ``surround'' refers to that the winding number of $\gamma$ around $z$ is non-zero.

\begin{proof}[Proof of Theorem~\ref{thm:bulk}]
For $x_1,x_2\in\R$ and $z\in\hH$, define
\[
\lambda=\frac{(x_2-x_1)(\bar z-z)}{(z-x_1)(\bar z-x_2)}.
\]
Note that $\lambda\in\C$ and $\bar\lambda=\frac{\lambda}{\lambda-1}$, hence it takes value on the circle $\{\lambda\in\C:|\lambda-1|=1\}$.
By conformal covariance, there is a real-valued function $\Delta(\lambda)$ such that
\[
G(x_1,x_2,z)=|x_2-x_1|^{-2h}|z-\bar z|^{-\alpha}\Delta(\lambda).
\]
Similar to Proposition~\ref{prop:gf-convergence-sec2}, we can use the chordal $\SLE_\kappa$ to approximate the Green's function~\eqref{eq:def-gf-bulk}. Namely, define
\[
F(y_1,y_2,x,z)dx=\int{\bf 1}_{E_z(\eta)}\CR(z,D_z(\eta))^{-\alpha}\nu_{\eta\cap\R}(dx)\mu_{\hH,y_1,y_2}(d\eta),
\]
where $D_z(\gamma)$ is the connected component  of $\hH\setminus\gamma$ containing $z$, $\CR(z,D_z(\eta))$ is its conformal radius viewed at $z$, and $E_z(\eta)$ is the event that $z$ is surrounded by the joining of $\gamma$ and the line segment from $y_2$ to $y_1$.
Then we have $G(u,x,z)\propto\lim_{y_1,y_2\to u} |y_2-y_1|^{-h}F(y_1,y_2,x,z)$.
Furthermore, when $\eta$ is parameterized by its half-plane capacity, for its Loewner map $g_t:\hH\setminus\eta_t\to\hH$, we have
\[
M_t:=|g_t'(x)|^h|g_t'(z)|^\alpha F(g_t(y_1),g_t(y_2),g_t(x),g_t(z))
\]
is a local martingale. The same argument as in Lemma~\ref{lem:smooth} gives the smoothness of $F$. Hence, $F$ satisfies the following second-order PDE
\begin{equation}\label{eq:pde-bulk}
\begin{aligned}
\left(-\frac{2h}{(x-y_1)^2}\right.-\frac{\alpha}{(z-y_1)^2}&-\frac{\alpha}{(\bar z-y_1)^2}-\frac{\kappa-6}{y_2-y_1}\partial_{y_1}+\frac{2}{y_2-y_1}\partial_{y_2}\\
&\left.+\frac{2}{x-y_1}\partial_{x_1}+\frac{2}{z-y_1}\partial_{z}+\frac{2}{\bar z-y_1}\partial_{\bar z}+\frac{\kappa}{2}\partial_{y_1}^2\right)F=0.
\end{aligned}  
\end{equation}
By symmetry, the above PDE also holds when changing the place of $y_1$ and $y_2$. Here for convenience, we view $z$ and $\bar z$ as independent variables. Note that such equations are similar to those in Proposition~\ref{prop:pde-u-v}, with the three marked points $(x_1,h), (x_2,h), (x_3,h)$ replaced by $(x,h), (z,\frac{\alpha}{2}), (\bar z,\frac{\alpha}{2})$. Consequently, we can repeat the fusion procedure in Section~\ref{sec:fusion} in parallel. Namely, \eqref{eq:pde-bulk} implies a third-order differential equation for $G(u,x,z)$. The resulting ODE\footnote{Since $\lambda\in\C$ takes value on the circle $\{\lambda\in\C:|\lambda-1|=1\}$, we can view $\lambda$ as if it were a real variable.} for $\Delta(\lambda)$ is
\begin{equation}\label{eq:fusion-ode-bulk}
\begin{aligned}
&\kappa^2\lambda^2(1 - \lambda)^3\Delta'''(\lambda)\\
&-2\kappa\lambda(1 - \lambda)^2((3\kappa-8)\lambda-(3\kappa-16))\Delta''(\lambda)\\
&+(1 - \lambda)((-8\alpha\kappa + 6\kappa^2 - 40\kappa + 64)\lambda^2  -4(\kappa - 6)(3\kappa - 8)\lambda + 6(\kappa - 4)(\kappa-8))\Delta'(\lambda)\\
&+8\alpha(8-\kappa)\lambda(2-\lambda)\Delta(\lambda)= 0.
\end{aligned}
\end{equation}

Note that $\lambda_0=2$ is an ordinary point of~\eqref{eq:fusion-ode-bulk}. Taking $\alpha=\frac{(3\kappa-8)(8-\kappa)}{32\kappa}$ into~\eqref{eq:fusion-ode-bulk} and solving~\eqref{eq:fusion-ode-bulk} at $\lambda_0$, the general solution in a neighborhood of $\lambda_0$ is
\begin{equation}\label{eq:solution-bulk}
\begin{aligned}
\Delta(\lambda) = c_1 \, \lambda^{-\frac{\kappa-8}{\kappa}} (1-\lambda)^{\frac{\kappa-8}{2\kappa}}&+ c_2 \, (1-\lambda)^{\frac{\kappa-8}{\kappa}} \, {}_2F_1\!\left(\frac{2(\kappa-8)}{\kappa}, \frac{3(\kappa-8)}{2\kappa}; \frac{3\kappa-8}{2\kappa}; 1-\lambda\right) \\
&+ c_3 \, (1-\lambda)^{\frac{\kappa-8}{2\kappa}} \, {}_2F_1\!\left(\frac{\kappa-8}{\kappa}, \frac{3(\kappa-8)}{2\kappa}; \frac{\kappa+8}{2\kappa}; 1-\lambda\right)
\end{aligned}
\end{equation}
where $c_1,c_2,c_3\in\C$. Indeed,~\eqref{eq:solution-bulk} is well-defined and smooth on $\{\lambda\in\C:|\lambda-1|=1,\lambda\neq0\}$, thus provides the general solution on $\{\lambda\in\C:|\lambda-1|=1,\lambda\neq0\}$. Furthermore, $\Delta(\lambda)$ needs to be $o(1)$
both as $|x_1-x_2|\to0$ (with $z$ fixed) and as ${\rm Im}(z)\to0$ (with $x_1,x_2$ and ${\rm Re}(z)$ fixed).
Since both $(1-\lambda)^{\frac{\kappa-8}{\kappa}} \, {}_2F_1\!\left(\frac{2(\kappa-8)}{\kappa}, \frac{3(\kappa-8)}{2\kappa}; \frac{3\kappa-8}{2\kappa}; 1-\lambda\right)
$ and $ (1-\lambda)^{\frac{\kappa-8}{2\kappa}} \, {}_2F_1\!\left(\frac{\kappa-8}{\kappa}, \frac{3(\kappa-8)}{2\kappa}; \frac{\kappa+8}{2\kappa}; 1-\lambda\right)$ have different non zero limits as $\lambda=1-e^{i0-}$ and $\lambda=1-e^{i0+}$, we must have $c_2=c_3=0$ in~\eqref{eq:solution-bulk}. Consequently, we have $\Delta(\lambda) \propto \lambda^{-\frac{\kappa-8}{\kappa}} (1-\lambda)^{\frac{\kappa-8}{2\kappa}}$, which implies~\eqref{eq:bulk-final}.
\end{proof}

For critical percolation ($\kappa=6$), the corresponding discrete convergence was shown by~\cite{conijn15}.
In~\cite{KSZ-connectivity}, the authors also expected that the factorization formula would hold for the critical Potts models. Our Theorem~\ref{thm:bulk} gives the rigorous extension to $\CLE_\kappa$ with general $\kappa\in(4,8)$.

One can also consider one-bulk and two-boundary correlations for other weights $\alpha$ at the interior point $z$, and solve~\eqref{eq:fusion-ode-bulk} at the ordinary point $\lambda_0=2$. Taking $\alpha=1-\frac{\kappa}{8}$ in~\eqref{eq:fusion-ode-bulk} corresponds to the one-point interior Green's function for bi-chordal $\SLE_\kappa$ pairs, while taking $\alpha=0$ corresponds to the probability that an interior point is between bi-chordal $\SLE_\kappa$ pairs. These cases were previously considered in~\cite{viklund-sle2} (for $\kappa\in(0,4]$), based on the construction of martingale observables by~\cite{KM13}.
However, for $\alpha\neq\frac{(3\kappa-8)(8-\kappa)}{32\kappa}$, the factorized solution~\eqref{eq:bulk-final} no longer exists. 

\appendix

\section{Boundary Green's functions of SLE}\label{appendix:gf}

In this appendix we show the equivalence of various definitions of SLE boundary Green's functions. For $\kappa\in(4,8)$, let $\eta$ be a $\SLE_\kappa$ on $\hH$ from $0$ to $\infty$.
The boundary Green's function in~\cite{fakhry2023} is defined to be the normalized limit of the probability that $\eta$ approaches the neighborhood of $x_1,...,x_n\in\R$. For $n=1,2$, the existence of such boundary Green's functions was earlier proved in~\cite{lawler-mink-R}.

\begin{proposition}[{\cite[Theorem 1.1]{fakhry2023}}]\label{prop:gf-existence}
For $x_1,...,x_n\in\R$, the limit
\begin{equation}\label{eq:gf-fz}
\wh G(x_1,...,x_n):=\lim_{r_i\to0}\prod_{i=1}^nr_i^{-h}\cdot\P[\eta\cap B(x_i,r_i)\neq\emptyset,1\le i\le n]
\end{equation}
exists and is continuous of $x_1,...,x_n$. Furthermore, the convergence is uniform on compact sets.
\end{proposition}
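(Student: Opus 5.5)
The plan is to induct on $n$. Each small ball is handled at two scales: a coarse scale $s$, where we use the domain Markov property, and a fine scale $r\ll s$, where we use a one-point estimate with a rate. Throughout, write $E_i(r)=\{\eta\cap B(x_i,r)\neq\emptyset\}$.

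\textbf{Base case $n=1$.} By scaling, $\P[E_1(r)]$ depends only on $r/|x_1|$. For $\kappa\in(4,8)$ this event is a one-dimensional event for the process $(g_t(x_1\pm r)-W_t)$. It reduces to a hitting problem for a Bessel-type diffusion, which is explicitly solvable through a hypergeometric function. This should give
\[
\P[E_1(r)]=c_0\,(r/|x_1|)^h\bigl(1+O((r/|x_1|)^{\delta})\bigr)
\]
for some $\delta>0$, so $\wh G(x_1)=c_0|x_1|^{-h}$.

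\textbf{Two a priori inputs.} The first is an up-to-constants multi-point bound
\[
\P\bigl[E_i(r_i),\ 1\le i\le n\bigr]\le C\prod_i r_i^h
\]
for separated points. I would prove it by the usual induction: stop at the first hitting time of some $B(x_i,d_i)$, map out with $g_\tau$, and apply the one-point bound after Koebe distortion. The factor $(d_i)^{-h}$ is absorbed via the boundary two-arm-type estimate: after reaching $B(x_i,d)$, the curve returns close to $x_i$ before disconnecting it with probability $O(s^{2h})$, as in the estimate used in Lemma 3.5 of~\cite{fakhry2023}. The second input is a convergence-with-rate statement, which is the key lemma. Condition on $\eta$ up to the hitting time $\tau_s$ of $\partial B(x_1,s)$, where $r_1\ll s\ll$ the mutual distances. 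Then the conditional probability of $E_1(r_1)$ should equal
\[
\wh G_{\hH\setminus\eta[0,\tau_s],\eta(\tau_s),\infty}(x_1)\,r_1^h\,\bigl(1+O((r_1/s)^{\delta})\bigr),
\]
uniformly over the past. This follows from the base case, conformal covariance, and distortion of $g_{\tau_s}$ near $x_1$, since the image of $B(x_1,r_1)$ is nearly a ball of radius $|g'_{\tau_s}(x_1)|r_1$.

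\textbf{Induction step.} Fix scales $s_i\gg r_i$. Decompose $\bigcap_i E_i(r_i)$ according to the order $\sigma$ in which the curve first reaches the balls $B(x_i,s_i)$, and write the probability as a sum over $\sigma$. Apply the key lemma at the first visited point, which reduces everything to an $(n-1)$-point problem in the slit domain. The remaining $(n-1)$-point probabilities converge after normalisation by the induction hypothesis. This is applied in the random domain, so I need the uniformity of the convergence over domains whose marked points stay at distance $\gtrsim s$ from the slit; this is the reason to carry the statement ``uniform on compacts'' through the induction. Two error events remain. The first is that the curve returns to an already visited ball $B(x_j,s_j)$ and only later hits $B(x_j,r_j)$. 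This costs an extra factor $(s_j/\mathrm{dist})^{\delta'}$ by the return estimate above. The second is the $O((r/s)^\delta)$ rate errors. The multi-point bound controls all of these. Letting first $r_i\to0$ and then $s_i\to0$ shows that $\prod r_i^{-h}\P[\cdot]$ is Cauchy, with errors uniform for $(x_i)$ in compacts of the separated configuration space. Continuity of $\wh G$ then follows, because each normalised approximant is continuous in $(x_i)$ (by continuity of the law of $\eta$ under translation and scaling) and the convergence is uniform.

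\textbf{Main obstacle.} I expect the hard step to be the uniform rate in the key lemma. The base case is explicit only in $\hH$ with target $\infty$. After mapping out $\eta[0,\tau_s]$, the other marked points may sit near the slit or near the new tip. I would handle this by restricting to ``good'' pasts, where the conformal images of all remaining points stay comparably separated. The bad pasts are then shown to have negligible contribution, using the up-to-constants bound together with the boundary three-arm exponent $3h+1>h$.
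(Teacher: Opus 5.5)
The paper does not prove this statement; it imports it from \cite{fakhry2023}. The architecture you propose is the right kind of argument: a one-point estimate with a rate, transported to slit domains by conformal covariance and Koebe distortion, an up-to-constants $n$-point bound, induction on the first ball reached at a coarse scale, return estimates, and then $r_i\to0$ before $s_i\to0$. But its foundation, the base case, fails as you state it.

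The event $E_1(r)=\{\eta\cap B(x_1,r)\neq\emptyset\}$ concerns a half-disc, and it is not an exit event of the scale-reduced one-dimensional diffusion built from $g_t(x_1\pm r)-W_t$. That diffusion only records whether $x_1-r$ is swallowed strictly before $x_1+r$. This is the interval event $\{\eta\cap[x_1-r,x_1+r]\neq\emptyset\}$, which is the one with the explicit incomplete-beta/hypergeometric solution. It produces the interval Green's function $\wt G$ of \eqref{eq:gf-R1}, not $\wh G$. A curve can touch the semicircular arc, move away, and later swallow all of $[x_1-r,x_1+r]$ at once by hitting $\R$ to the right of $x_1+r$. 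Such a path leaves that diffusion through the same end as a path that never approaches $x_1$. So you do not have $\P[E_1(r)]=c_0(r/|x_1|)^h(1+O((r/|x_1|)^\delta))$, and consequently your key lemma has no rate.

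This cannot be repaired cheaply from the interval estimate. Suppose you stop at the hitting time of $\ol{B(x_1,r)}$ and apply the interval estimate afterwards. That only reduces the question to a different one: whether the conditional law, given $E_1(r)$, of the rescaled configuration around $x_1$ at that stopping time converges, with a rate, as $r\to0$. That is a mixing statement, across scales, for the curve conditioned to approach $x_1$. It needs a genuine coupling or ergodicity argument, and this is where the real work for Euclidean balls lies. It is not in the uniformity over slit domains that you single out as the main obstacle. Once you have a one-point estimate with rate for balls in $(\hH,0,\infty)$, your key lemma really is a short distortion argument, using $(g(x_1)-W)/g'(x_1)\ge s/4$ at $\tau_s$.

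A secondary point concerns the induction step. Conditioning at $\tau_s$ still leaves an $n$-point problem, not an $(n-1)$-point one. To reach an $(n-1)$-point problem you need a second stopping time, e.g. the hitting time of $B(x_1,r_1)$ on the event that the curve has not left a slightly larger ball in between. You also need a continuity estimate comparing the $(n-1)$-point Green's functions of the two slit domains. This is where the uniformity over slit domains actually enters.
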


In this paper we use the $(1-h)$-dimensional Minkowski content $\nu_{\eta\cap\R}$ of $\eta\cap\R$ to define boundary Green's functions. For any open set $J\subset\R$, $\nu_{\eta\cap\R}(J)$ is defined by
\[
\nu_{\eta\cap\R}(J):=\lim_{\varepsilon\to0}\varepsilon^{-h}{\rm Leb}_\R(\{x\in J: {\rm \dist}(x,\eta\cap \R)<\varepsilon\});
\]
the limit is shown to exist by~\cite{lawler-mink-R} (see also~\cite{zhan-boundary-gf}). Then we have
\begin{proposition}\label{prop:equivalence}
    For any disjoint $S_1,..,S_n\subset\R$ and $\wh G(x_1,...,x_n)$ defined in~\eqref{eq:gf-fz}, there exists $C\in(0,\infty)$ such that
    \begin{equation*}
        \int_{S_1\times...\times S_n}\wh G(x_1,...,x_n)\prod_{i=1}^ndx_i=C^n\E\left[\prod_{i=1}^n\nu_{\eta\cap\R}(S_i)\right].
    \end{equation*}
    Here the expectation $\E$ is with respect to the chordal $\SLE_\kappa$ curve $\eta$.
    Hence, $\wh G(x_1,...,x_n)=CG(x_1,...,x_n)$, where $G(x_1,...,x_n):=G_{\hH,0,\infty}(x_1,...,x_n)$ is defined in~\eqref{eq:gf}. 
\end{proposition}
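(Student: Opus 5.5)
The plan is to compare two ways of measuring how $\eta\cap\R$ hits small neighbourhoods. The first is the ball-hitting probabilities in~\eqref{eq:gf-fz}. The second is the Minkowski content, i.e. Lebesgue measure of the $\varepsilon$-neighbourhood of $\eta\cap\R$. The key observation is that
\[
{\rm Leb}_\R(\{x\in S: \dist(x,\eta\cap\R)<\varepsilon\})=\int_S {\bf 1}_{\eta\cap B(x,\varepsilon)\cap\R\neq\emptyset}\,dx.
\]
Moreover, for $x\in\R$ and $\kappa\in(4,8)$, the event $\eta\cap B(x,\varepsilon)\neq\emptyset$ differs from $\eta\cap I(x,\varepsilon)\neq\emptyset$. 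So I first need a lemma (presumably the Lemma~\ref{lem:mink} already referenced) stating that $\P[\eta\cap B(x,\varepsilon)\neq\emptyset]\sim c\,\P[\eta\cap I(x,\varepsilon)\neq\emptyset]$ with a universal constant $c$, uniformly in the remaining marked points. I would prove this by the domain Markov property at the first time $\eta$ enters $B(x,\varepsilon)$. After this stopping time, the conditional probability of then hitting $I(x,\varepsilon)$ depends, by scaling and conformal invariance, only on the rescaled configuration near $x$. Averaging gives a ratio converging to a constant, because the one-point boundary Green's function has an exact power law.

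Granting this, I write, for disjoint compact $S_i$ (first assumed to be intervals at positive distance, then extended by monotone class),
\[
\E\Big[\prod_{i=1}^n \varepsilon^{-h}{\rm Leb}(\{x\in S_i:\dist(x,\eta\cap\R)<\varepsilon\})\Big]=\int_{S_1\times\cdots\times S_n}\prod_i\varepsilon^{-h}\,\P[\eta\cap I(x_i,\varepsilon)\neq\emptyset\ \forall i]\prod_i dx_i .
\]
By Proposition~\ref{prop:gf-existence}, applied with the $I$-version converted via the constant $c$ from the lemma (uniformly on compacts, since the $x_i$ stay separated), the integrand converges uniformly to $C^{-n}\wh G(x_1,\dots,x_n)$. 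The right side therefore tends to $C^{-n}\int\wh G$.

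For the left side I use that $\varepsilon^{-h}{\rm Leb}(\cdots)\to\nu_{\eta\cap\R}(S_i)$ almost surely (by~\cite{lawler-mink-R}). To pass the limit inside the expectation I need uniform integrability of the product. I would get it from bounded $L^2$-type moments: the expectation of the square of the product is again an integral of $2n$-point hitting probabilities. For points in the same $S_i$ these can collide, so I need the upper bound $\P[\eta\cap I(x_j,\varepsilon)\ne\emptyset\ \forall j]\lesssim\prod_j\varepsilon^{h}\,\prod(\text{gaps})^{-h}$ from~\cite{fakhry2023,zhan-boundary-gf}. This bound is integrable since $h<1$. This uniform integrability step is the main obstacle, particularly making the multi-point upper bound uniform in $\varepsilon$ when points cluster. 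Boundary points such as $0$ are handled by taking $S_i$ away from $0$.

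Finally, identifying the densities gives $\wh G=C^nG_{\hH,0,\infty}$ almost everywhere, hence everywhere by continuity of $\wh G$ and of $G$. Continuity of $G$ is inherited from the identity itself, or from Lemma~\ref{lem:smooth}-type arguments. The constant $C$ is independent of $n$ because it is the single-point constant $c$ from the ball-versus-interval lemma, which appears once for each marked point.
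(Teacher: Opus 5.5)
Your Fubini identity and the limit on the Minkowski-content side are fine. The uniform integrability you single out is actually the routine part. Since $I(x,\varepsilon)\subset B(x,\varepsilon)$, the Fakhry--Zhan multi-point upper bounds for ball-hitting probabilities dominate your interval probabilities, exactly as in Zhan's derivation.

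The genuine gap is the ball-versus-interval lemma for $n\ge2$. For $n=1$ it is immediate, since both $r^{-h}\P[\eta\cap B(x,r)\neq\emptyset]$ and $r^{-h}\P[\eta\cap I(x,r)\neq\emptyset]$ have limits proportional to $|x|^{-h}$. Your argument, however, needs
\[
\lim_{\varepsilon\to0}\frac{\P[\eta\cap I(x_j,\varepsilon)\neq\emptyset\ \forall j]}{\P[\eta\cap B(x_j,\varepsilon)\neq\emptyset\ \forall j]}=c^{-n},
\]
and the justification you sketch does not give it. After the entrance time into $B(x,\varepsilon)$, the conditional probability of hitting $I(x,\varepsilon)$ \emph{and} the remaining balls is not determined by a rescaled picture near $x$. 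It involves the whole hull, the target, and the requirement of later reaching the other points. Conditioning on the joint event also changes the law of the local configuration at the entrance time. To extract exactly one factor $c^{-1}$ per point, you would have to show two things: that the local event asymptotically decouples from the macroscopic ones, and that the conditional law of the rescaled local configuration converges to the same limit as in the one-point problem. That is a coupling argument of the same depth as Proposition~\ref{prop:gf-existence} itself, and the one-point power law supplies none of it.

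The paper avoids this by comparing random measures instead of probabilities. Lemma~\ref{lem:mink} is not the probability lemma you guessed. It shows that a ball-type Minkowski content $\wh\nu_{\eta\cap\R}$ exists and equals $C\nu_{\eta\cap\R}$ pathwise, using the Alberts--Sheffield uniqueness of conformally covariant, domain-Markov measures on $\eta\cap\R$. The Fubini/limit-exchange argument you describe is then run with balls, where Proposition~\ref{prop:gf-existence} applies directly. This gives $\int\wh G=\E[\prod_i\wh\nu_{\eta\cap\R}(S_i)]=C^n\E[\prod_i\nu_{\eta\cap\R}(S_i)]$. In the paper, the ball/interval comparison of hitting probabilities (Corollary~\ref{cor:equivalence}) is an output of the proposition, not an input. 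To repair your route you need either such a pathwise identification of the two contents, or an honest multi-point proof of your comparison lemma.
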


Proposition~\ref{prop:equivalence} is based on the following equivalence of defining Minkowski contents of $\eta\cap\R$ using the neighborhoods in $\R$ or in $\hH$, which is implicit in~\cite{lawler-mink-R,zhan-boundary-gf} and we provide a sketch argument here. 
\begin{lemma}\label{lem:mink}
The $(1-h)$-dimensional Minkowski content $\wh \nu_{\eta\cap\R}$ of $\eta\cap\R$ exists when viewed as a subset of $\ol\hH$, and $\wh \nu_{\eta\cap\R}=C \nu_{\eta\cap\R}$ for some $C\in(0,\infty)$. Namely, for any open subset $J\subset\R$, define
\[
\wh \nu_{\eta\cap\R}(J):=\lim_{\varepsilon\to0}\varepsilon^{-h}{\rm Area}(\{x\in \hH: {\rm \dist}(x,\eta\cap J)<\varepsilon\}).
\]
Then the limit exists and $\wh \nu_{\eta\cap\R}(J)=C\nu_{\eta\cap\R}(J)$.
\end{lemma}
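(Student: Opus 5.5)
We need to prove Lemma~\ref{lem:mink}: the planar-neighborhood Minkowski content of $\eta\cap\R$ exists and is a constant multiple of the linear one. The strategy is to follow the scheme by which~\cite{lawler-mink-R,zhan-boundary-gf} construct $\nu_{\eta\cap\R}$ (one- and two-point estimates plus an $L^2$ argument), and to run it with the half-disc events in place of the interval events.

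For $x\in J$, set
\[
A_\varepsilon(x)=\{\eta\cap B(x,\varepsilon)\cap\R\neq\emptyset\},\qquad \wh A_\varepsilon(x)=\{\dist(x,\eta\cap\R)<\varepsilon\}.
\]
By Fubini, the area of the $\varepsilon$-neighborhood of $\eta\cap J$ in $\hH$ is $\varepsilon\int_J\int_0^1 {\bf 1}\{\dist(x+i\varepsilon s,\eta\cap J)<\varepsilon\}\,ds\,dx$, up to boundary effects at the endpoints of $J$ of order $\varepsilon^{1+1-h}\to$ negligible. Rescaling by $\varepsilon^{-h}$, this becomes
\[
\varepsilon^{-h}\cdot\varepsilon\int_J \Psi_\varepsilon(x)\,dx .
\]
Here $\Psi_\varepsilon(x)=\int_0^1{\bf 1}\{\dist(x+i\varepsilon s,\eta\cap J)<\varepsilon\}\,ds$ is a local functional of $\eta\cap\R$ near $x$ at scale $\varepsilon$. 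Since $\Psi_\varepsilon(x)$ vanishes off $\wh A_\varepsilon(x)$, it suffices to understand its conditional expectation given $\wh A_\varepsilon(x)$.

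\textbf{Step 1 (first moment).} By Proposition~\ref{prop:gf-existence} and scale invariance, $\varepsilon^{-h}\E[\Psi_\varepsilon(x)]\to c_1\,\wh G(x)$ for some constant $c_1$. The reason is that, conditioned on hitting a small neighborhood of $x$, the configuration of $\eta\cap\R$ rescaled by $\varepsilon$ converges to a scale-invariant local limit, namely the boundary analogue of the two-sided curve. This is the same input that gives $\P[\wh A_\varepsilon(x)]\sim c_2\varepsilon^{h}G(x)$. Consequently, the expectation of the area functional converges to $C\int_J G(x)\,dx$ with $C=c_1/c_2$.

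\textbf{Step 2 (second moment).} We show that
\[
\E\Big[\Big(\varepsilon^{-h}\int_J \Psi_\varepsilon\,dx - C\,\varepsilon^{-h}\int_J {\bf 1}_{\wh A_\varepsilon(x)}\,dx\Big)^2\Big]\to 0.
\]
Expanding the square gives a double integral over $(x,y)$. For $|x-y|\gg\varepsilon$, we use the two-point convergence~\eqref{eq:gf-fz} with $n=2$, together with asymptotic independence of the local configurations at $x$ and $y$; the latter follows from the domain Markov property and the decorrelation estimate of~\cite[Lemma 3.5]{fakhry2023}. This asymptotic independence makes the cross terms match, so their contribution tends to zero. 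For $|x-y|\lesssim\varepsilon^{1-\delta}$, we bound the contribution by the two-point upper bound $\P[\wh A_\varepsilon(x)\cap\wh A_\varepsilon(y)]\lesssim \varepsilon^{2h}|x-y|^{-h}$, which is integrable because $h<1$. This region therefore contributes $o(1)$.

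\textbf{Step 3 (conclusion).} Since $\varepsilon^{-h}\int_J{\bf 1}_{\wh A_\varepsilon}\,dx\to\nu_{\eta\cap\R}(J)$ in $L^2$ by~\cite{lawler-mink-R}, Step 2 yields $L^2$ convergence of the planar functional to $C\nu_{\eta\cap\R}(J)$. To upgrade to almost sure convergence, we first take $\varepsilon$ along a geometric sequence and apply Borel--Cantelli, which requires a polynomial rate in Step 2. Monotonicity of the neighborhood area in $\varepsilon$ then handles intermediate values of $\varepsilon$.

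The main obstacle is the decorrelation in Step 2: showing that the conditional law of the rescaled local picture at $x$ is asymptotically insensitive to conditioning on the configuration near a distant $y$, with a quantitative rate. I would try to derive this from a coupling of SLE restarted after hitting $B(x,r)$ for $\varepsilon\ll r\ll|x-y|$, as in~\cite{zhan-boundary-gf}.
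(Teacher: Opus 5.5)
Your outline can be made to work, but it is far heavier than necessary, and its self-declared crux (the decorrelation in Step 2) is left unproved. The missing idea is an exact identity that makes Steps 1--3 superfluous. For $K=\eta\cap J\subset\R$ and $t>0$ one has $\dist(x+it,K)^2=\dist(x,K)^2+t^2$, because the vertical offset does not depend on the point of $K$. Hence your $\Psi_\varepsilon(x)$ is not a delicate local functional at all. It equals $\int_0^1{\bf 1}\{\dist(x,K)<\varepsilon\sqrt{1-s^2}\}\,ds$, a function of the single number $\dist(x,K)/\varepsilon$, and
\[
{\rm Area}(\{z\in\hH:\dist(z,K)<\varepsilon\})=\varepsilon\int_0^1 L(\varepsilon\sqrt{1-s^2})\,ds,\qquad L(r):={\rm Leb}(\{x\in\R:\dist(x,K)<r\}).
\]
Take a bounded interval $J$. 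Then $r^{-h}L(r)\to\nu_{\eta\cap\R}(J)$ almost surely; this is just the existence of $\nu_{\eta\cap\R}$, since replacing neighbourhoods of $\eta\cap\R$ inside $J$ by neighbourhoods of $\eta\cap J$ changes the length by at most $4r$. Moreover $r^{-h}L(r)$ is bounded on $(0,\varepsilon_0]$. Dominated convergence in $s$ then gives $\varepsilon^{-1-h}{\rm Area}\to C\,\nu_{\eta\cap\R}(J)$ almost surely, with the explicit constant $C=\int_0^1(1-s^2)^{h/2}\,ds$. No local limit, no two-point decorrelation and no Borel--Cantelli upgrade is needed.

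Two slips are worth fixing regardless.

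First, the normalization $\varepsilon^{-h}$ in the statement must be $\varepsilon^{-(1+h)}$. A planar $\varepsilon$-neighbourhood of a $(1-h)$-dimensional subset of $\R$ has area of order $\varepsilon^{1+h}$, so with $\varepsilon^{-h}$ the limit is $0$. Your Step 1 expression $\varepsilon^{-h}\cdot\varepsilon\int_J\Psi_\varepsilon$ inherits this and tends to $0$. Step 2 then silently switches to $\varepsilon^{-h}\int_J\Psi_\varepsilon$, which is the correctly normalized quantity.

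Second, Proposition~\ref{prop:gf-existence} is the wrong input for Step 1. It concerns the whole curve $\eta$ hitting the disc $B(x,r)$, which is not determined by $\eta\cap\R$. By contrast, $\Psi_\varepsilon$ only sees interval events for $\eta\cap\R$, which are governed by the Green's function $\wt G=G$ of \eqref{eq:gf-R1}.

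Comparison with the paper:
\begin{itemize}
\item The paper gets existence by re-running Zhan's construction of the boundary Minkowski content (his Theorem 6.17). It uses the disc Green's functions of Proposition~\ref{prop:gf-existence} and replaces Zhan's Eq.~(6.11) by Fakhry--Zhan's Proposition 3.1.
\item It then obtains $\wh\nu_{\eta\cap\R}=C\nu_{\eta\cap\R}$ abstractly from the Alberts--Sheffield characterization: both measures are supported on $\eta\cap\R$, conformally covariant and Markovian.
\item Your $L^2$-comparison route would instead give existence and proportionality in one go, with a concrete constant. It avoids the uniqueness theorem at the cost of a decorrelation estimate.
\end{itemize}
For the set as literally written, the identity above supersedes both approaches. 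It uses only that the set lies in $\R$, so it fails for neighbourhoods of the whole curve $\eta$. That variant is the quantity dual to the disc Green's function $\wh G$ used in Proposition~\ref{prop:equivalence}. There, machinery like the paper's, or your Step 2, is genuinely required.
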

\begin{proof}
Based on Proposition~\ref{prop:gf-existence}, repeating the proof of~\cite[Theorem 6.17]{zhan-boundary-gf} (with Eq.(6.11) there replaced by~\cite[Proposition 3.1]{fakhry2023}) gives the existence of such $\wh \nu_{\eta\cap\R}$. Note that $\wh \nu_{\eta\cap\R}$ and $\nu_{\eta\cap\R}$ are both supported on $\eta\cap\R$, and satisfy the conformal covariance and domain Markov property. According to the axiomatic characterization of $\nu_{\eta\cap\R}$~\cite{alberts-sheffield-bdy-measure} (see also~\cite{CL24}), $\nu_{\eta\cap\R}$ agrees with $\wh \nu_{\eta\cap\R}$ up to a multiplicative constant.
\end{proof}

\begin{proof}[Proof of Proposition~\ref{prop:equivalence}]
Repeating the derivation of~\cite[Eq.(6.15)]{zhan-boundary-gf} line by line, we have
\[
\int_{S_1\times...\times S_n}\wh G(x_1,...,x_n)\prod_{i=1}^ndx_i=\E\left[\prod_{i=1}\wh \nu_{\eta\cap\R}(S_i)\right].
\]
The result follows by combining with Lemma~\ref{lem:mink}. The constant $C$ is the same as in Lemma~\ref{lem:mink}.
\end{proof}

In~\cite{zhan-boundary-gf}, the author also showed the existence of the limits
\begin{equation}\label{eq:gf-R1}
\wt G(x):=\lim_{r\to0} r^{-h}\P[\eta\cap I(x,r)\neq\emptyset],\quad \wt G(x_1,x_2):=\lim_{r_1,r_2\to0} r_1^{-h}r_2^{-h}\P[\eta\cap I_{x_i}(r)\neq\emptyset,1\le i\le 2]
\end{equation}
where $I(x,r):=(x-r,x+r)\subset\R$. Furthermore, by~\cite[Theorem 6.17]{zhan-boundary-gf}, $\int_{S}\wt G(x)dx=\E\left[\nu_{\eta\cap\R}(S)\right]$ and $\int_{S_1\times S_2}\wt G(x_1,x_2)dx_1dx_2=\E\left[\prod_{i=1}^2\nu_{\eta\cap\R}(S_i)\right]$ for any open $S,S_1,S_2\subset\R$. Hence, $G(x)=\wt G(x)$ and $G(x_1,x_2)=\wt G(x_1,x_2)$ (recall~\eqref{eq:gf}). As a corollary of Proposition~\ref{prop:equivalence}, we have

\begin{corollary}\label{cor:equivalence}
Let $C$ be the same constant as in Proposition~\ref{prop:equivalence}. Then
we have $G(x)=\wt G(x)=C^{-1}\wh G(x)$ and $G(x_1,x_2)=\wt G(x_1,x_2)=C^{-2}\wh G(x_1,x_2)$.
\end{corollary}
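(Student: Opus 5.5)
The plan is to derive Corollary~\ref{cor:equivalence} directly from Proposition~\ref{prop:equivalence} together with Zhan's identification of $\wt G$. Two identities have to be proved: $G=\wt G$, and $G=C^{-n}\wh G$ for $n=1,2$.

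For the first identity I would start from~\cite[Theorem 6.17]{zhan-boundary-gf}. It says that for open $S,S_1,S_2\subset\R$,
\[
\int_S \wt G(x)\,dx=\E[\nu_{\eta\cap\R}(S)],\qquad \int_{S_1\times S_2}\wt G(x_1,x_2)\,dx_1dx_2=\E\left[\nu_{\eta\cap\R}(S_1)\nu_{\eta\cap\R}(S_2)\right].
\]
By the definition~\eqref{eq:gf} with $y_1=0$, $y_2=\infty$, the right-hand sides equal $\int_S G$ and $\int_{S_1\times S_2}G$. Open intervals and products of disjoint open intervals generate the Borel $\sigma$-algebras of $\R$ and $\{x_1\neq x_2\}$ respectively, so $G=\wt G$ almost everywhere. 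Since $\wt G$ is continuous away from the diagonal (it is a conformally covariant explicit function, or by uniform convergence as in Proposition~\ref{prop:gf-existence}), we can take $G$ to be this continuous version, and the identity then holds pointwise.

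For the second identity, apply Proposition~\ref{prop:equivalence} with $n=1$ and $n=2$. This gives $\int_{S_1\times\cdots\times S_n}\wh G=C^n\int_{S_1\times\cdots\times S_n}G$ for disjoint $S_i$, with the same constant $C$ that comes from Lemma~\ref{lem:mink}. Continuity of $\wh G$ (Proposition~\ref{prop:gf-existence}) and of $G$ then gives $\wh G=C^nG$ pointwise off the diagonal.

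The exponent of $C$ here is $n$, but the corollary as written reads $G=C^{-1}\wh G$ and $G=C^{-2}\wh G$. These agree: $\wh G=C^nG$ is the same as $G=C^{-n}\wh G$. The normalization in Proposition~\ref{prop:equivalence} fixes the constant, so no extra constant appears.

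The only step needing any care is the passage from equality of integrals to pointwise equality, which requires continuity of both sides. For $\wh G$ this is Proposition~\ref{prop:gf-existence}; for $\wt G$ it is Zhan's result. I do not expect a genuine obstacle.
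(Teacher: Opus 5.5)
Your proposal is correct and is essentially the paper's argument: \cite[Theorem 6.17]{zhan-boundary-gf} identifies $\wt G$ with the density $G$ of~\eqref{eq:gf}. Proposition~\ref{prop:equivalence} then gives $\wh G=C^nG$ for $n=1,2$; note that the ``$\wh G=CG$'' in that proposition's last line should read $C^n$, as you correctly deduced from its integral identity. Your continuity argument for passing from the integral identities to pointwise equality makes explicit a step the paper leaves implicit.
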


We also need to deal with the $\SLE_\kappa$ bubble measure. Let $\mu^\bub_0$ be the $\SLE_\kappa$ bubble measure rooted at $0$~\eqref{eq:bubble-def}, and denote $\gamma$ as a sample of $\mu^\bub_0$.
\begin{proposition}\label{prop:gf-existence-bub}
For $x_1,...,x_n\in\R$, the limit
\[
\wh G^\bub_0(x_1,...,x_n)=\lim_{r_i\to0}\prod_{i=1}^nr_i^{-h}\cdot \mu^\bub_0[\gamma\cap B(x_i,r_i)\neq\emptyset,1\le i\le n]
\]
exists and is continuous of $x_1,...,x_n$. Furthermore, for any disjoint $S_1,..,S_n\subset\R$,
    \begin{equation}\label{eq:gf-mink-bub}
        \int_{S_1\times...\times S_n}\wh G^\bub_0(x_1,...,x_n)\prod_{i=1}^ndx_i=C^n\mu^\bub_0\left[\prod_{i=1}^n\nu_{\gamma\cap\R}(S_i)\right].
    \end{equation}
Hence, $\wh G^\bub_0(x_1,...,x_n)=C^n G^\bub_0(x_1,...,x_n)$ where $G^\bub_0(x_1,...,x_n)$ is defined in~\eqref{eq:def-gf-bub} and $C$ is the same constant as in Proposition~\ref{prop:equivalence}.
\end{proposition}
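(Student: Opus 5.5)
The plan is to pass to the chordal approximation $\mu_0^\bub=\lim_{\varepsilon\to0}\varepsilon^{-h}\mu^\#_{\hH,0,\varepsilon}$ and transfer the chordal statements, Proposition~\ref{prop:gf-existence} and Proposition~\ref{prop:equivalence}, to the bubble. We fix disjoint closed neighbourhoods of $x_1,\dots,x_n$ that stay at distance $\ge d>0$ from $0$. Chordal $\SLE_\kappa$ from $0$ to $\varepsilon$ is conformally covariant. Composing with a M\"obius map sending $(0,\varepsilon)$ to $(0,\infty)$ and applying Proposition~\ref{prop:gf-existence}, the rescaled hitting probabilities
\[
\varepsilon^{-h}\prod_i r_i^{-h}\,\mu^\#_{\hH,0,\varepsilon}[\eta\cap B(x_i,r_i)\neq\emptyset,\ 1\le i\le n]
\]
converge as $r_i\to0$ to $\varepsilon^{-h}\wh G_{\hH,0,\varepsilon}(x_1,\dots,x_n)$. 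This limit equals $C^n\varepsilon^{-h}G_{\hH,0,\varepsilon}(x_1,\dots,x_n)$, and the convergence is uniform on compacts. Note that the balls $B(x_i,r_i)$ are not exactly mapped to balls. To handle this, we sandwich the image sets between balls of radii $|\phi'(x_i)|r_i(1\pm O(r_i))$ and use the continuity of $\wh G$.

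The key step is to interchange the limits $r_i\to0$ and $\varepsilon\to0$. We need two estimates, uniform in small $\varepsilon$.

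\textbf{(a) Green's functions converge.} The quantity $\varepsilon^{-h}\wh G_{\hH,0,\varepsilon}(x_1,\dots,x_n)$ should converge, uniformly on compacts away from $0$, to a continuous limit. For this I would invoke the convergence $|y_2-y_1|^{-h}G_{\hH,y_1,y_2}\to G^\bub_u$ already quoted in Proposition~\ref{prop:gf-convergence-sec2}. The uniformity in $(x_i)$ comes from conformal covariance in the Möbius parameters.

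\textbf{(b) Hitting probabilities are uniformly close to their limit.} The rescaled hitting probabilities must be close to their $r\to0$ limit, uniformly in $\varepsilon$. I would obtain this by the domain Markov property: run the chordal curve until it first reaches distance $d/2$ from $0$. On this event (whose probability is $\asymp\varepsilon^h$ by Lemma~\ref{lem:bubble-cle}/Corollary~\ref{cor:sharp}), the remaining curve is a chordal SLE in a domain whose marked points are macroscopically separated from the $x_i$. Hence the error bounds in Proposition~\ref{prop:gf-existence} apply uniformly after a Koebe distortion argument.

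Granting (a) and (b), the double limit exists, is continuous, and equals $C^n G^\bub_0$ by the weak convergence defining the bubble measure. The only caveat is boundary effects of the event: hitting the open balls is not a continuity set. To deal with this, we compare open and closed balls of slightly different radii, again using continuity of $\wh G$.

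For \eqref{eq:gf-mink-bub}, integrate the chordal identity of Proposition~\ref{prop:equivalence} over $S_1\times\dots\times S_n$, multiply by $\varepsilon^{-h}$, and let $\varepsilon\to0$. The left side converges by (a) plus dominated convergence; the latter requires a local bound on $\varepsilon^{-h}G_{\hH,0,\varepsilon}$, which comes from finiteness in Lemma~\ref{lem:f-basic}. On the right side, we need $\varepsilon^{-h}\E_\varepsilon[\prod_i\nu_{\eta\cap\R}(S_i)]\to\mu^\bub_0[\prod_i\nu_{\gamma\cap\R}(S_i)]$. This requires continuity of Minkowski content under the weak limit. I would avoid proving that directly and instead use the Markov decomposition from (b): conditioned on the initial segment, both sides are the Minkowski-content moment of a chordal curve in a perturbed domain, which are identified by Proposition~\ref{prop:equivalence} and conformal covariance; then integrate over the initial segment.

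The main obstacle is the uniformity in (b), i.e.\ controlling rare configurations where the initial segment comes close to some $x_i$. I would bound these by the boundary two-arm/one-point estimates of \cite{zhan-boundary-gf} together with the multi-point upper bound $\prod_i r_i^{h}$ from \cite{fakhry2023}.
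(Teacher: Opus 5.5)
\textbf{Gap: step (a) is circular.} You take the convergence $|y_2-y_1|^{-h}G_{\hH,y_1,y_2}\to G^\bub_u$ from Proposition~\ref{prop:gf-convergence-sec2}, but that is not independent input. That proposition is proved via Proposition~\ref{prop:gf-convergence}, and the proof of the latter identifies the iterated limit $\lim_{r\to0}\lim_{\varepsilon\to0}$ by invoking the very statement you are proving: existence of $\wh G^\bub_0$ and $\wh G^\bub_0=C^nG^\bub_0$. In fact, the existence of $\lim_{\varepsilon\to0}\varepsilon^{-h}\wh G_{\hH,0,\varepsilon}$ asserted in (a) is already as strong as the first claim of the proposition, so it cannot be assumed.

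The dominated-convergence step for \eqref{eq:gf-mink-bub} has a related hole. Local boundedness in Lemma~\ref{lem:f-basic} says nothing about $\varepsilon^{-h}G_{\hH,0,\varepsilon}$ uniformly as $\varepsilon\to0$, because of the blowing-up factor $\varepsilon^{-h}$. That uniform bound has to come from the $O(\varepsilon^h)$ exit estimate in (b). Also, that exit estimate is a chordal boundary one-point bound; Lemma~\ref{lem:bubble-cle} and Corollary~\ref{cor:sharp} concern CLE loops, not chordal $\SLE_\kappa$ from $0$ to $\varepsilon$.

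\textbf{The fix, which is the paper's proof.} The missing idea is already in your (b), but apply it to the bubble itself rather than to the chordal approximants. Only bubbles exiting a small ball $B(0,\delta)$ can reach the $B(x_i,r_i)$, and these have finite $\mu^\bub_0$-mass. Given $\gamma[0,\tau_\delta]$, the rest of $\gamma$ is a chordal $\SLE_\kappa$ in the remaining domain from $\gamma(\tau_\delta)$ to $0$. The hull lies in $\ol{B(0,\delta)}$, so the uniformizing map has derivatives at the $x_i$ bounded above and below deterministically. Hence the ``rare configurations near $x_i$'' you identify as the main obstacle cannot occur. Proposition~\ref{prop:gf-existence} with its local uniformity, conformal covariance, and dominated convergence then give
$\wh G^\bub_0(x_1,\dots,x_n)=\mu^\bub_0\big[\wh G_{\cU(\gamma[0,\tau_\delta]);\gamma(\tau_\delta),0}(x_1,\dots,x_n)\big]$, together with continuity.

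Your conditional use of Proposition~\ref{prop:equivalence}, transported by conformal covariance and then integrated over the initial segment, then yields \eqref{eq:gf-mink-bub}. This is a clean alternative to re-running Zhan's derivation, which is what the paper invokes. No $\varepsilon\to0$ interchange is needed; your Minkowski-content step already uses the bubble's own decomposition, so the chordal detour buys nothing.

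If you want to keep the chordal route, drop (a) and use Moore--Osgood. Convergence in $r$ uniform in $\varepsilon$ (from (b)), plus convergence in $\varepsilon$ for each fixed $r$ (from the weak limit, with your open/closed ball sandwich), gives both iterated limits and their equality. This mirrors how the paper afterwards proves Proposition~\ref{prop:gf-convergence} from the present proposition, so your plan runs the logic in the opposite order.
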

Note that by conformal covariance, we have $G^\bub_0(x)\propto H_{\hH}(0,x)^h$ where $H_{\hH}$ is the boundary Poisson kernel on $\hH$.
\begin{proof}[Proof of Proposition~\ref{prop:gf-existence-bub}]
Choose $\varepsilon$ small and fixed, and let $\tau_\varepsilon$ be the first hitting time of $\partial B(0,\varepsilon)$ for $\gamma$. Then conditioned on $\gamma[0,\tau_\varepsilon]$, the remaining part of $\gamma$ is a chordal $\SLE_\kappa$ from $\gamma(\tau_\varepsilon)$ to $0$ on the remaining domain $\cU(\gamma[0,\tau_\varepsilon])$. Thus the first statement follows from Proposition~\ref{prop:gf-existence} and conformal covariance of Green's function. Namely, $\wh G^\bub_0(x_1,...,x_n)=\mu^\bub_0[\wh G_{\cU(\gamma[0,\tau_\varepsilon]);\gamma(\tau_\varepsilon),0}(x_1,...,x_n)]$ while the derivatives of the conformal map $\psi_\varepsilon:\hH\setminus\gamma[0,\tau_\varepsilon]\to\hH$ on $x_1,...,x_n$ have uniform upper and lower bounds. Furthermore, this gives similar bounds of $G^\bub_0$ of~\cite[Proposition 3.1]{fakhry2023}, which ensures the validity of repeating the argument of deriving~\cite[Eq.(6.15)]{zhan-boundary-gf} to give~\eqref{eq:gf-mink-bub}. 
\end{proof}

We now show that the boundary Green's function $G_{\hH,y_1,y_2}(x_1,...,x_n)$ of chordal $\SLE_\kappa$ defined in~\eqref{eq:gf} converges to $G^\bub_0(x_1,...,x_n)$ as $y_1,y_2\to0$.
\begin{proposition}\label{prop:gf-convergence}
As $y_1,y_2\to0$, we have $|y_2-y_1|^{-h}G_{\hH,y_1,y_2}(x_1,...,x_n)\to G_0^\bub(x_1,...,x_n)$.
\end{proposition}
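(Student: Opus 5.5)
The plan is to mirror the proof of Proposition~\ref{prop:gf-existence-bub}: decompose at a fixed small scale, apply the domain Markov property, and pass to the limit using the known convergence $|y_2-y_1|^{-h}\mu^\#_{\hH,y_1,y_2}\to\mu_0^\bub$ from~\eqref{eq:bubble-def}. Fix $\varepsilon>0$ small compared with $\min_i|x_i|$ and the mutual distances of the $x_i$. For $\eta\sim\mu^\#_{\hH,y_1,y_2}$ with $|y_1|,|y_2|\ll\varepsilon$, let $\tau_\varepsilon$ be the first hitting time of $\partial B(0,\varepsilon)$; on the event that $\eta$ never exits $B(0,\varepsilon)$, the curve contributes nothing near the $x_i$.

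By the domain Markov property and the conformal covariance of Minkowski content, I would write
\[
G_{\hH,y_1,y_2}(x_1,\dots,x_n)=\E\Big[{\bf 1}_{\tau_\varepsilon<\infty}\,G_{\cU(\eta[0,\tau_\varepsilon]);\eta(\tau_\varepsilon),y_2}(x_1,\dots,x_n)\Big].
\]
The analogous formula holds for $G_0^\bub(x_1,\dots,x_n)$, with $\mu_0^\bub$ in place of the law of $\eta$ and target $0$ in place of $y_2$. Two facts are needed. First, $|y_2-y_1|^{-h}\P[\tau_\varepsilon<\infty]$ should be bounded, and the law of $\eta[0,\tau_\varepsilon]$ on $\{\tau_\varepsilon<\infty\}$, rescaled by $|y_2-y_1|^{-h}$, should converge weakly to the law of $\gamma[0,\tau_\varepsilon]$ under $\mu_0^\bub$ restricted to $\{\tau_\varepsilon<\infty\}$. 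This is a direct consequence of~\eqref{eq:bubble-def}, since $\{\tau_\varepsilon<\infty\}$ is a continuity set and $\mu_0^\bub[\tau_\varepsilon<\infty]<\infty$. Translation invariance handles the fact that the root is $y_1$ rather than $0$. Second, the map from a hull $K$, a tip $w$ and a target $y$ to $G_{\cU(K);w,y}(x_1,\dots,x_n)$ should be bounded and continuous for the relevant topology.

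For the second fact, map $\cU(K)$ to $\hH$ via $\psi$ sending $w\mapsto0$ and $y\mapsto\infty$. Then $G_{\cU(K);w,y}(\bx)=\prod_i|\psi'(x_i)|^h G_{\hH,0,\infty}(\psi(\bx))$. By Proposition~\ref{prop:gf-existence} and Proposition~\ref{prop:equivalence}, $G_{\hH,0,\infty}$ is continuous, and by~\cite[Proposition 3.1]{fakhry2023} it is locally bounded away from coincident points. As the hulls converge in Carathéodory sense (including the target $y_2\to0$), $\psi$ and $\psi'$ converge uniformly near the $x_i$, which stay at distance at least $\varepsilon$ from the hull. Moreover the $\psi(x_i)$ remain distinct, so the integrand converges.

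The main obstacle is uniform integrability. The hull $\eta[0,\tau_\varepsilon]$ may come close to some $x_i$, or the images $\psi(x_i)$ may approach $0$ or each other, making the integrand unbounded. I would handle this by working on a further event: that $\eta[0,\tau_\varepsilon]$ stays in $B(0,\varepsilon)$, which holds by definition up to the hitting point. Hence ${\rm dist}(x_i,K)\ge\min|x_i|-\varepsilon$, and Koebe's distortion bounds $|\psi'(x_i)|$ above and below uniformly. The images $\psi(x_i)$ are then uniformly separated from each other and from $0$, apart from degenerate configurations where the hull nearly disconnects $x_i$ from infinity, which cannot occur inside $B(0,\varepsilon)$. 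On this compact range of parameters the integrand is uniformly bounded by the local boundedness above. Dominated convergence under the converging finite measures then yields the limit.
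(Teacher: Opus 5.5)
Your argument takes a genuinely different route from the paper's. The overall strategy can be made to work, but the step you single out as the main obstacle is not justified as written.

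\textbf{Comparison with the paper.} The paper never passes the Green's function itself through the bubble limit. It works with the fixed-radius quantities $p(y_1,y_2;r)=|y_2-y_1|^{-h}\prod_i r_i^{-h}\,\P_{y_1,y_2}[\eta\cap B(x_i,r_i)\neq\emptyset\ \forall i]$.
\begin{itemize}
\item Letting $y_1,y_2\to0$ at fixed $r$ needs only weak convergence to $\mu_0^\bub$ in the Hausdorff topology, since these are hitting events of fixed balls.
\item The limit $r_i\to0$ is then Proposition~\ref{prop:gf-existence-bub}.
\item The two limits are interchanged because the $r\to0$ convergence is uniform in $(y_1,y_2)$. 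The decomposition at $\tau_\varepsilon$ is used only to prove that uniformity.
\end{itemize}
You instead write $G$ as an expectation over the stopped segment and use dominated convergence. This avoids the double limit, but it costs more on the convergence side. You need weak convergence of the law of $\eta[0,\tau_\varepsilon]$ jointly with Carath\'eodory convergence of the marked domain $(\cU(\eta[0,\tau_\varepsilon]),\eta(\tau_\varepsilon),y_2)$. That means convergence in a topology on curves, plus almost-sure continuity of the stopping map under the limit measure. This is standard, but it is not a direct consequence of Hausdorff convergence of traces, which is all the paper invokes.

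\textbf{The gap.} The configuration $(\psi(x_1),\dots,\psi(x_n))$ does not range over a compact set, even modulo scaling. Write $\psi=M\circ g_K$ with:
\begin{itemize}
\item $g_K$ hydrodynamically normalized;
\item $W=g_K(w)$, $B=g_K(y)$ the image of the target, and $X_i=g_K(x_i)$;
\item $M(z)=c\,(z-W)/(B-z)$.
\end{itemize}
Then for $i\neq j$,
\[
\frac{\psi(x_i)-\psi(x_j)}{\psi(x_i)}=\frac{(B-W)(X_i-X_j)}{(X_i-W)(B-X_j)},
\]
which is of order $B-W$. Up to a factor $\pi$, $B-W$ is the harmonic measure from $\infty$ of the boundary arc of $\cU(K)$ running from the tip to the target and containing no $x_i$.

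This quantity is not bounded below on $\{\tau_\varepsilon<\infty\}$: the tip can sit at the end of a narrow fjord on that side. Confining $K$ to $\overline{B(0,\varepsilon)}$ does nothing to prevent this. The degeneration is conformally invariant, so no choice of $c$ removes it:
\begin{itemize}
\item with $c=1$ the points stay near $-1$, but $|\psi'(x_i)|\asymp B-W$;
\item with $c=(B-W)^{-1}$ the derivatives are of order one, but the points run off to distance $\asymp(B-W)^{-1}$ while staying at mutual distance of order one.
\end{itemize}
So two-sided Koebe bounds and local boundedness of $G_{\hH,0,\infty}$ away from the diagonal cannot both be used. Your claims of uniform separation and a compact parameter range are false.

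\textbf{How to repair it.} The integrand is nonetheless uniformly bounded. It even tends to $0$ as $B-W\to0$, because the curve must first escape to distance of order $\min_i|x_i|$. Proving this needs a quantitative bound.
\begin{itemize}
\item The cleanest fix is to bound $G_{\cU(K);w,y}(x_1,\dots,x_n)$ directly in the original domain. Use ${\rm dist}(x_i,K)$ and the mutual distances, and iterate one-point estimates as in the proof of~\eqref{eq:gf-estimate}. This uses Koebe $1/4$ only for upper bounds on derivatives, and never sees $B-W$.
\item Alternatively, use a multi-point upper bound of the type in~\cite[Proposition 3.1]{fakhry2023} for coalescing points.
\end{itemize}
With such a bound, and continuity at $\mu_0^\bub$-a.e.\ configuration (where $B>W$), your dominated convergence argument closes.
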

\begin{proof}
Let $\mu_{\hH,y_1,y_2}^\#$ be the probability measure of chordal $\SLE_\kappa$ from $y_1$ to $y_2$.
Then as $y_1,y_2\to0$, $|y_2-y_1|^{-h}\P_{y_1,y_2}$ converges weakly in Hausdorff topology to $\mu_0^\bub$ (see~\cite[Theorem 3.10]{zhan-bubble}). Denote $p(y_1,y_2;r_1,r_2):=|y_2-y_1|^{-h}\prod_{i=1}^n r_i^{-h}\P_{y_1,y_2}[\eta\cap B(x_i,r_i)\neq\emptyset,\ 1\le i\le n]$. Hence,
\[
\lim_{y_1,y_2\to0}p(y_1,y_2;r_1,r_2)=\prod_{i=1}^n r_i^{-h}\mu_0^\bub[\eta\cap B(x_i,r_i)\neq\emptyset,\ 1\le i\le n].
\]
Taking $r_i\to0$, by Proposition~\ref{prop:gf-existence-bub}, the right side becomes $\wh G_0^\bub(x_1,...,x_n)=C^nG_0^\bub(x_1,...,x_n)$ (with the same constant $C$ there). For the left side, note that $\lim_{r_1,r_2\to0}p(y_1,y_2;r_1,r_2)=C^n|y_2-y_1|^{-h}G_{\hH,y_1,y_2}(x_1,...,x_n)$ by Propositions~\ref{prop:gf-existence} and~\ref{prop:equivalence}. It suffices to show that the convergence of $p(y_1,y_2;r_1,r_2)$ as $r_i\to0$ is uniform of $y_1,y_2$, since we can then change the order of limit
\[
\lim_{y_1,y_2\to0}\lim_{r_1,r_2\to0}p(y_1,y_2;r_1,r_2)=\lim_{r_1,r_2\to0}\lim_{y_1,y_2\to0}p(y_1,y_2;r_1,r_2)=C^nG_0^\bub(x_1,...,x_n)
\]
which implies the result. Such uniform convergence can be seen by choosing again a small and fixed $\varepsilon$, and let $\tau_\varepsilon$ be the first hitting time of $\partial B(0,\varepsilon)$ for $\eta$. Conditioned on $\eta[0,\tau_\varepsilon]$, the remaining part of $\eta$ is a chordal $\SLE_\kappa$ on $\cU(\eta[0,\tau_\varepsilon])$ from $\eta(\tau_\varepsilon)$ to $y_2$. Note that the convergence in Proposition~\ref{prop:gf-existence} is uniform on compact sets, as well as we have uniform control of the uniformization map of $\phi:\cU(\eta[0,\tau_\varepsilon])\to\hH$ near each $x_i$. The desired uniform convergence then follows.
\end{proof}

Finally, we include the following equivalent description of CLE boundary four-point Green's functions, which will be used in Appendix~\ref{appendix:discrete}.
This first needs a variant of Lemma~\ref{lem:bubble-cle} and Corollary~\ref{cor:sharp}.

\begin{lemma}\label{lem:bubble-cle-variant}
Suppose $I\subset\R$ is an interval, and $\Gamma$ is a $\CLE_\kappa$ configuration on $\hH$. For $x\in\R$, Let $T_\varepsilon(x)$ (resp.~$\wh T_\varepsilon(x)$) be the event that there exists a loop $\ell$ intersecting $I$ and $B(x,\varepsilon)$ (resp.~$I(x,\varepsilon)$; in case there are two or more such loops, we e.g.~take the leftmost one on $I$). Then the law of ${\bf 1}_{T_\varepsilon(x)}\ell$ and ${\bf 1}_{\wh T_\varepsilon(x)}\ell$, times $\varepsilon^{-h}$, converges to $C\mu_x^\bub$ restricted to intersect $I$ for some constant $C\in(0,\infty)$.
\end{lemma}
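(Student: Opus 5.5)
The plan is to copy the proof of Lemma~\ref{lem:bubble-cle} and Corollary~\ref{cor:sharp}, replacing the interior marked point $z$ by the boundary interval $I$. We may assume $x\notin\ol I$; otherwise the event is degenerate. Neither Lemma~\ref{lem:bubble-cle} nor Corollary~\ref{cor:sharp} can be used as a black box here, because the conditioning is now ``touches $I$'' rather than ``surrounds $z$''.

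\textbf{Step 1: reduction to the bubble measure.} By Proposition~\ref{prop:touching-equals-bubble}, the boundary-touching loops of $\Gamma$ form a point process with intensity ${\frak C}\mu^\bub$. The loops of $\Gamma$ are disjoint from each other's interiors, and we select the leftmost loop on $I$ when several qualify. Hence the law of ${\bf 1}_{\wh T_\varepsilon(x)}\ell$ equals ${\frak C}\mu^\bub$ restricted to loops meeting $I$ and $I(x,\varepsilon)$, minus a correction. The correction comes from configurations in which two or more distinct loops each meet both $I$ and $I(x,\varepsilon)$.

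That correction is bounded by the probability that two distinct CLE loops both connect $I$ to $I(x,\varepsilon)$. This event forces at least four boundary arms from $I(x,\varepsilon)$ to distance of order one. Its probability is $O(\varepsilon^{h+\delta})$ for some $\delta>0$, by the same type of boundary three-arm estimate already used (exponent $3h+1>h$). Therefore the correction vanishes after multiplying by $\varepsilon^{-h}$.

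\textbf{Step 2: the bubble measure near $x$.} It remains to show that $\varepsilon^{-h}\mu^\bub[\cdot\,;\gamma\cap I\neq\emptyset,\ \gamma\cap I(x,\varepsilon)\neq\emptyset]$ converges weakly to $C\mu_x^\bub[\cdot\,;\gamma\cap I\ne\emptyset]$.

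By \eqref{eq:rooted-unrooted-1}, $\nu_{\gamma\cap\R}(dy)\mu^\bub(d\gamma)=\mu_y^\bub(d\gamma)dy$. I would argue exactly as in the proof of Proposition~\ref{prop:touching-equals-bubble}, using the Minkowski content. Recall that $\varepsilon^{-h}{\bf 1}_{\gamma\cap I(y,\varepsilon)\neq\emptyset}\,dy\to C\nu_{\gamma\cap\R}(dy)$, with $L^1$ control given by Proposition~\ref{prop:gf-existence-bub} (boundary two-point bounds for the bubble). This yields vague convergence in $y$ of
\[
\varepsilon^{-h}\mu^\bub[\cdot\,;\gamma\cap I\neq\emptyset,\ \gamma\cap I(y,\varepsilon)\neq\emptyset]\,dy\ \longrightarrow\ C\mu_y^\bub[\cdot\,;\gamma\cap I\neq\emptyset]\,dy .
\]
To pass from convergence after integrating in $y$ to convergence at the fixed point $x$, I would use translation and scaling covariance of both sides. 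Concretely, apply a Möbius map fixing $I$'s endpoints' images, or equivalently use the continuity in $(x,I)$ of both sides. This gives convergence for every $x$, in the same way as the uniformity claim in Proposition~\ref{prop:touching-equals-bubble}. Existence of the limit constant also follows as there, since $\varepsilon^{-h}\P[\wh T_\varepsilon]$ converges.

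\textbf{Step 3: the ball version.} For $T_\varepsilon(x)$, replace intervals by half-balls. Lemma~\ref{lem:mink} shows that the half-plane Minkowski content equals a constant multiple of $\nu_{\gamma\cap\R}$, so the argument goes through with a different constant $C$.

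The main obstacle is Step 2's pointwise upgrade. The cleanest route is probably direct: condition the bubble $\mu_x^\bub$-approximation through chordal SLE. By \eqref{eq:bubble-def}, first run $\gamma$ until it exits $B(x,\varepsilon')$, then use conformal covariance. This shows that $\varepsilon^{-h}\mu^\bub$ restricted to hitting $I(x,\varepsilon)$, as a measure on the macroscopic part of the loop, has a density that is continuous in $x$. If that becomes technical, I would fall back on the vague-convergence-plus-continuity argument above.
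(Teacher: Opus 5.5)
Your argument is viable, but it takes a different route from the paper's.

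\emph{The paper's route.} It adapts, by citation, the Sheffield--Werner exploration to the boundary setting: discover loops near $x$, map out, and stop at the first loop meeting both $I(x,\varepsilon)$ and $I$. This gives $\P[\wh T_\varepsilon(x)]=\varepsilon^{h+o(1)}$ together with pointwise-in-$x$ convergence of the conditional law to $\mu_x^\bub$ conditioned to meet $I$. Only then does it apply the Minkowski-content argument of Proposition~\ref{prop:touching-equals-bubble}, via the measure $\mathbbm{N}_\varepsilon$, to upgrade $\varepsilon^{h+o(1)}$ to a sharp constant.

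\emph{Your route.} You take Proposition~\ref{prop:touching-equals-bubble} as input, which turns the CLE statement into a statement about $\mu^\bub$ plus a multiplicity correction. You then read off the limit from the disintegration~\eqref{eq:rooted-unrooted-1}. This buys you two things. It needs no new exploration argument, and it identifies the constant explicitly as ${\frak C}$ (times the constant of Lemma~\ref{lem:mink} in the ball version). That is exactly the normalization the footnote in the proof of Lemma~\ref{lem:cascade-2} and Remark~\ref{rmk:poisson} require. It also makes the leftmost-loop selection explicit. The price is that the disintegration only gives convergence after integrating in the root, so pointwise convergence costs a genuine extra step; in the paper it comes directly from the exploration.

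\textbf{Points to tighten.}

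\emph{The correction term.} It is the expected number of qualifying loops that are not selected, i.e.\ $\E[(N-1)^+]\le\E[N(N-1)]$ with $N$ the number of loops meeting both $I$ and $I(x,\varepsilon)$. It is not a probability. The underlying event is a boundary three-arm event (two clusters separated by a dual arm, exponent $3h+1$), not a four-arm one. For CLE loops this bound is not among the estimates already in the paper. You would derive it from the Markov property of CLE given the first loop, since non-crossing forces the second loop into a pocket of the first.

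\emph{The pointwise upgrade.} The analogy with the uniformity claim in Proposition~\ref{prop:touching-equals-bubble} points the wrong way: there, pointwise convergence was the input. What makes your M\"obius idea work is a sandwich. Let $\phi$ be the hyperbolic M\"obius map fixing the endpoints of $I$ with $\phi(x)=y$. For small $\varepsilon$, the interval $\phi^{-1}(I(y,\varepsilon))$ lies between $I(x,c_-\varepsilon)$ and $I(x,c_+\varepsilon)$, with $c_\pm\to1$ as $y\to x$. Combine this with:
\begin{itemize}
\item the invariance of $\mu^\bub$;
\item the monotonicity in $\varepsilon$ of $\mu^\bub[F;\gamma\cap I(x,\varepsilon)\neq\emptyset,\gamma\cap I\neq\emptyset]$ for uniformly continuous $F\ge0$;
\item continuity of $y\mapsto\mu_y^\bub[F;\gamma\cap I\neq\emptyset]$.
\end{itemize}
Together these squeeze the value at $x$ between averages over nearby roots, and the integrated convergence controls those averages.

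\emph{The fallback.} Running $\gamma$ until it exits $B(x,\varepsilon')$ does not apply as stated, because the unrooted bubble conditioned to hit $I(x,\varepsilon)$ is not rooted at $x$.
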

\begin{proof}
We focus on $\wh T_\varepsilon(x)$, and the case for $T_\varepsilon(x)$ is similar (with using Lemma~\ref{lem:mink}).
Consider the CLE exploration process by discovering loops that intersect $B(0,\varepsilon)$, then conformally map the remaining unbounded component to $\hH$ (with $I$ fixed), until there exists a loop intersecting both $B(0,\varepsilon)$ and $I$.
By the same argument as~\cite[Section 4]{shef-werner-cle}, as $\varepsilon\to0$, $\P[\wh T_\varepsilon(x)]$ is $\varepsilon^{h+o(1)}$; furthermore, conditioned on $\wh T_\varepsilon(x)$, the conditional law of $\ell$ converges to $\mu_0^\bub$ conditioned to intersect $I$. On the other hand, on the event $\wh T_\varepsilon(x)$, let $Q_{x,\varepsilon}$ be the law of the leftmost loop on $I$ that intersects with $I(x,\varepsilon)$. Then for any fixed bounded interval $U\subset\R$ such that $U\cap I=\emptyset$, consider the measure $\mathbbm{N}_\varepsilon:=\P[\wh T_\varepsilon(x)]^{-1}{\bf 1}_{x\in U}Q_{x,\varepsilon}(d\gamma)dx$ in the place of $\mathbbm{M}_\varepsilon$ in the proof of Proposition~\ref{prop:touching-equals-bubble}. Then similar to that proof, for compactly supported and continuous $f$, $\mathbbm{N}_\varepsilon[f]$ on the one hand converges to $C\int_U f(x,\gamma){\bf 1}_{\gamma\cap I\neq\emptyset}\mu^\bub_x(d\gamma) dx$ for some $C\in(0,\infty)$, while by the existence of boundary Minkowski content in~\cite[Theorem 6.17]{zhan-boundary-gf}, $\mathbbm{N}_\varepsilon[f]$ on the other hand converges to $\left(\lim_{\varepsilon\to0}\frac{\varepsilon^h}{\P[\wh T_\varepsilon(x)]}\right)\cdot\E\left[\int_U\sum_{\ell\in \Gamma}f(x,\ell){\bf 1}_{\ell\cap I\neq\emptyset}\nu_{\ell\cap\R}(dx)\right]$. In particular, the limit $\lim_{\varepsilon\to0}\frac{\varepsilon^h}{\P[\wh T_\varepsilon(x)]}\in(0,\infty)$ exists. The result then follows.
\end{proof}
\begin{remark}\label{rmk:poisson}
The above proof gives
$\int_U \left(\lim_{\varepsilon\to0}\varepsilon^{-h}\P[\wh T_\varepsilon(x)]\right)dx=\int_U\E\left[\sum_{\ell\in \Gamma}{\bf 1}_{\ell\cap I\neq\emptyset}\nu_{\ell\cap\R}(dx)\right]$. If we further take $I=I(y,\delta)$ and let $\delta\to0$, then it yields
\[
\lim_{\delta\to0}\lim_{\varepsilon\to0}\delta^{-h}\varepsilon^{-h}\P[\text{there exists a loop intersecting }I(x,\varepsilon)\text{ and } I(y,\delta)]=H_{\hH}(x,y)^h.
\]
Recall that the multiplicative constant in $H_{\hH}(x,y)$ is chosen in Proposition~\ref{prop:cle-gf-to-bubble}.
\end{remark}

\begin{proposition}\label{lem:CLE-shrink}
Suppose $x_1<x_2<x_3<x_4$, and $\Gamma$ is a $\CLE_\kappa$ configuration on $\hH$. We have
\begin{itemize}
    \item Let $E$ be the event that there exists a loop $\ell\in\Gamma$ such that $\ell\cap I(x_1,r_1)\neq\emptyset$, $\ell\cap B(x_i,r_i)\neq\emptyset$ for $i=2,3$, and $\ell\cap I(x_4,r_4)\neq\emptyset$. Then $\prod_{i=1}^4 r_i^{-h}\P[E]$ converges to $CG^{(1234)}(x_1,x_2,x_3,x_4)$ as taking first $r_1\to0$, then $r_2,r_3\to0$, finally $r_4\to0$.
    \item Let $F$ be the event that there exist two loops $\ell,\ell'\in\Gamma$ such that $\ell$ intersects $I(x_1,r_1)$ and $B(x_2,r_2)$, while $\ell'$ intersects $I(x_4,r_4)$ and $B(x_3,r_3)$. Then $\prod_{i=1}^4 r_i^{-h}\P[F]$ converges to $C'G^{(12)(34)}(x_1,x_2,x_3,x_4)$ as taking first $r_1\to0$, then $r_2\to0$, then $r_3\to0$, finally $r_4\to0$.
\end{itemize}
Here $C$ and $C'$ are two constants in $(0,\infty)$.
\end{proposition}
\begin{proof}
Denote $C\in(0,\infty)$ to be some constant varying from line to line.
By Lemma~\ref{lem:bubble-cle-variant}, $\prod_{i=1}^4 r_i^{-h}\P[E]$ converges to $C\prod_{i=2}^4\mu_0^\bub[\eta\cap B(x_i,r_i)\neq\emptyset,\ i=2,3;\ \gamma\cap I(x_4,r_4)\neq\emptyset]$ as $r_1\to0$. Let $\mu_{x_1,x_2,x_3}^\bub$ be the $\SLE_\kappa$ bubble measure rooted at $x_1,x_2,x_3$, such that $\mu_{x_1,x_2,x_3}^\bub(d\gamma)dx_1dx_2dx_3=\prod_{i=1}^3\nu_{\gamma\cap\R}(dx_i)\mu^\bub(d\gamma)$.
Taking $r_2,r_3\to0$ and using Proposition~\ref{prop:gf-existence-bub}, the limit becomes $Cr_4^{-h}G_{x_1}^\bub(x_2,x_3)\mu_{x_1,x_2,x_3}^\bub[\gamma\cap I(x_4,r_4)\neq\emptyset]$. Note that for $\gamma$ sampled from $\mu_{x_1,x_2,x_3}^\bub$, let $\gamma:=\eta\cup\eta'$ be such that $\eta$ is the curve segment from $x_1$ to $x_3$ and hits $x_2$. Then given $\eta$, the conditional law of $\eta'$ is a chordal $\SLE_\kappa$ on $\cU(\eta)$ from $x_3$ to $x_1$. Thus, taking $r_4\to0$, the limit (denoted as $L_1$) is then $G_{x_1}^\bub(x_2,x_3)$ times the boundary one-point Green's function at $x_4$ of $\mu_{x_1,x_2,x_3}^\bub$, which exists due to~\eqref{eq:gf-R1}. To show $L_1\propto G^{(1234)}(x_1,x_2,x_3,x_4)$, note that $r_4^{-h}G_{x_1}^\bub(x_2,x_3)\mu_{x_1,x_2,x_3}^\bub[\gamma\cap B(x_4,r_4)\neq\emptyset]$ has a limit $L_2$ as $r_4\to0$ by Proposition~\ref{prop:gf-existence}, and $L_1\propto L_2$ by Corollary~\ref{cor:equivalence}. Furthermore, $L_2$ is also proportional to the limit of $\prod_{i=2}^4r_i^{-h}\mu_0^\bub[\eta\cap B(x_i,r_i)\neq\emptyset,\ 2\le i\le4]$ as $r_2,r_3,r_4\to0$. By Proposition~\ref{prop:gf-existence-bub}, $L_2=G_0^\bub(x_1,x_2,x_3)$. Combined with~\eqref{eq:bub-2}, we conclude the first assertion.

For the second assertion, let $G$ be the event that there exists a loop $\ell'$ intersecting $I(x_4,r_4)$ and $B(x_3,r_3)$. Restricted to $G$ and given this $\ell'$, due to Lemma~\ref{lem:bubble-cle-variant} and Proposition~\ref{prop:gf-existence-bub}, the conditional probability of $F$ satisfies $r_1^{-h}r_2^{-h}\P[F|G,\ell']\to CH_{\cU(\ell')}(x_1,x_2)^h$ as $r_1\to0$ and then $r_2\to0$ (recall the convention that $H_{\cU(\ell')}(x_1,x_2):=0$ when $x_1$ or $x_2$ is not in $\ol{\cU(\ell')}$). Since $H_{\cU(\ell')}(x_3,x_4)\le H_{\hH}(x_3,x_4)$, by dominated convergence, Lemma~\ref{lem:bubble-cle-variant}, Proposition~\ref{prop:gf-existence-bub} and~\eqref{eq:bubble-two-pinned}, we have
\[
r_3^{-h}r_4^{-h}\E\left[{\bf 1}_G H_{\cU(\ell')}(x_1,x_2)^h\right]\to C\int H_{\cU(\gamma)}(x_1,x_2)^h \mu_{x_3,x_4}^\bub(d\gamma)
\]
as $r_1\to0$ and then $r_2\to0$.  By~\eqref{eq:bub-3}, the result then follows.
\end{proof}

\section{Discrete convergence}\label{appendix:discrete}

In this section we complete the discrete parts in the proofs of Theorems~\ref{thm:percolation} and~\ref{thm:fk}, as mentioned in Section~\ref{sec:kappa=6}. We first focus on the Bernoulli percolation, and it suffices to show the following
\begin{proposition}\label{prop:discrete}
Let $x_1<x_2<x_3<x_4$. There exist constants $C,C'\in(0,\infty)$ such that
\begin{align}
\lim_{\delta\to0}\delta^{-\frac{4}{3}}\P^\delta[x_1^\delta\leftrightarrow x_2^\delta\leftrightarrow x_3^\delta\leftrightarrow x_4^\delta]=C G^{(1234)}(x_1,x_2,x_3,x_4),\label{eq:discrete-1234}\\
\lim_{\delta\to0}\delta^{-\frac{4}{3}}\P^\delta[x_1^\delta\leftrightarrow x_2^\delta\not\leftrightarrow x_3^\delta\leftrightarrow x_4^\delta]=C' G^{(12)(34)}(x_1,x_2,x_3,x_4).\label{eq:discrete-12-34}
\end{align}
\end{proposition}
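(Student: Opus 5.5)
The plan is to pass from discrete arm events to the continuum events of Proposition~\ref{lem:CLE-shrink}, and then to fix the normalisation with the boundary one-arm estimate. Fix small radii $r_1,\dots,r_4$. Using the coupling of~\cite{conijn15} (the basic one-arm coupling), compare $\{x_i^\delta\leftrightarrow x_j^\delta\}$ with the event that the corresponding clusters join the mesoscopic neighbourhoods $I(x_i,r_i)$ or $B(x_i,r_i)$.

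\textbf{Factorisation.} By quasi-multiplicativity and the separation of scales for boundary one-arm events, the following should hold:
\[
\P^\delta[x_1^\delta\leftrightarrow x_2^\delta\leftrightarrow x_3^\delta\leftrightarrow x_4^\delta]=(1+o_{r}(1))\prod_{i=1}^4\frac{\P^\delta[x_i^\delta\leftrightarrow \partial B(x_i,r_i)]}{c(r_i)}\,\P^\delta[E^\delta_{r}].
\]
Here $E^\delta_r$ is the macroscopic event that a single cluster connects the four neighbourhoods, and $c(r_i)$ is a normalising one-arm probability. Concretely, I would condition on the configuration outside the small half-balls around the $x_i$. The conditional probability that $x_i^\delta$ is joined to the outside cluster is asymptotically independent of the outside configuration, up to a factor $1+o(1)$ as $r_i/r'_i\to0$. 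This follows from the coupling in~\cite{conijn15}, which shows that the inner arm "forgets" its boundary data.

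\textbf{Passage to the limit.} First use $\delta^{-1/3}\P^\delta[x\leftrightarrow\partial B(x,r)]\to c\,r^{-1/3}$, which is the boundary one-arm asymptotic with a sharp constant from the same coupling. Next, $\P^\delta[E^\delta_r]\to\P[E]$ by the convergence to $\CLE_6$~\cite{camia-newman-06}, where $E$ is the corresponding $\CLE_6$ event. Macroscopic connectivity between boundary neighbourhoods is encoded by boundary-touching loops, and the limit events have null boundaries. Combining these gives
\[
\lim_{\delta\to0}\delta^{-4/3}\P^\delta[\cdots]=C\prod_i r_i^{-1/3}\P[E]\,(1+o_r(1)).
\]
Letting the $r_i\to0$ in the order prescribed by Proposition~\ref{lem:CLE-shrink} yields~\eqref{eq:discrete-1234}, since $h=1/3$. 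For~\eqref{eq:discrete-12-34} the argument is the same, using the event $F$ and noting that non-connection of two macroscopic clusters is also a continuity event for the $\CLE_6$ limit. The limit $\delta\to0$ is independent of $r$, so the iterated limits in $r$ are legitimate.

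\textbf{Main obstacle.} The hard step is the decoupling with a $1+o(1)$ error that is uniform in the other points. A mismatch arises because Proposition~\ref{lem:CLE-shrink} mixes boundary intervals $I(\cdot)$ and half-disks $B(\cdot)$, whereas the discrete arm events are naturally defined with half-disks. I would handle this with Lemma~\ref{lem:mink}, which shows the two Minkowski normalisations differ only by constants. These constants are absorbed into $C$ and $C'$.
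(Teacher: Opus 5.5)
\textbf{Assertion \eqref{eq:discrete-1234}.} Here your route is the paper's: the one-arm coupling of \cite{conijn15}, convergence to $\CLE_6$, then Proposition~\ref{lem:CLE-shrink}.

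\textbf{Assertion \eqref{eq:discrete-12-34}.} This does not go through as you describe. The decoupling you use, \cite[Proposition 10]{conijn15}, is a statement about increasing connection events; its proof rests on RSW and FKG. The event $\{x_1^\delta\leftrightarrow x_2^\delta\not\leftrightarrow x_3^\delta\leftrightarrow x_4^\delta\}$ is not increasing. Its mesoscopic analogue is also not canonical, since one must decide which of possibly several clusters meeting the $r_i$-neighbourhood carries $x_i^\delta$. Your observation that non-connection is a continuity event for $\CLE_6$ only handles the $\delta\to0$ limit of the macroscopic event, not the factorisation step.

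The missing idea is to avoid the non-monotone event altogether. Run the same argument for the increasing event $\{x_1^\delta\leftrightarrow x_2^\delta,\ x_3^\delta\leftrightarrow x_4^\delta\}$, approximated by $\{I(x_1,r_1)\leftrightarrow B(x_2,r_2),\ I(x_3,r_3)\leftrightarrow B(x_4,r_4)\}$. Its continuum counterpart splits into a one-loop part and a two-loop part, so the limit is $C'G^{(12)(34)}+C''G^{(1234)}$. Subtracting \eqref{eq:discrete-1234}, after checking $C''=C$ (e.g.\ by letting $x_2\to x_3$), yields \eqref{eq:discrete-12-34}.

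\textbf{Normalisation.} Your normalisation also contains an unjustified step. For a $1+o(1)$ factorisation, the per-point factor delivered by the coupling is $\P^\delta[x_i^\delta\leftrightarrow\partial B(x_i,1)]/\P^\delta[W_i]$, where $W_i$ is the one-arm event from the $r_i$-neighbourhood of $x_i$ to unit distance. After $\delta\to0$ this leaves $C_1/\P[W_i]$, with $\P[W_i]$ a $\CLE_6$ one-arm probability. Writing the limit as $C\prod_i r_i^{-1/3}\P[E]\,(1+o_r(1))$ with $C$ independent of $r$ therefore presupposes the sharp continuum asymptotic $\P[W_i]\sim c\,r_i^{1/3}$, which you never establish.

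The paper obtains this only as a by-product of a squeeze. Let $L$ be the $\delta\to0$ limit, which exists by \cite{cf2025}. For all $r_i<s$, the quantity $C_1^4\prod_i\bigl(r_i^{-1/3}\P[W_i]\bigr)^{-1}\cdot\prod_i r_i^{-1/3}\P[E]$ lies between $L(1+\varepsilon(s))^{-1}$ and $L(1-\varepsilon(s))^{-1}$. Since $\prod_i r_i^{-1/3}\P[E]\to C_2G^{(1234)}$ by Proposition~\ref{lem:CLE-shrink}, the normalised one-arm product is forced to converge to an $x$-independent constant.

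\textbf{Attribution.} The limit $\delta^{-1/3}\P^\delta[x\leftrightarrow\partial B(x,r)]\to c\,r^{-1/3}$ is true, but it does not come from the coupling. The coupling only yields ratio limits, i.e.\ regular variation of index $1/3$. The sharp constant is the boundary one-arm asymptotic of \cite{DGLZ24}.
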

\begin{proof}
The existence of these two limits was shown in~\cite{cf2025}.
For any disjoint $A,B\subset\hH$, we write $\{A\leftrightarrow B\}$ for the event that there exists an open path connecting $A$ and $B$.
Without loss of generality, we suppose $\min|x_i-x_j|>1$. For $s>0$, let $r_i<s$.
Define
\[
V:=\{I(x_1,r_1)\leftrightarrow B(x_2,r_2)\leftrightarrow I(x_3,r_3)\leftrightarrow B(x_4,r_4)\},
\]
and $W_i$ (resp.~$\wt W_i$) to be the event that there exists an open path connecting $I(x_i,r_i)$ (resp.~$B(x_i,r_i)$) and $B(x_i,1)$. Then by~\cite[Proposition 10]{conijn15}, there exists increasing functions $\varepsilon(s), m(s)$ of $s$ with $\varepsilon(s),m(s)\to0$ as $s\to0$, such that for all $\delta<m(s)$,
\[
\left|\frac{\P^\delta[x_1^\delta\leftrightarrow x_2^\delta\leftrightarrow x_3^\delta\leftrightarrow x_4^\delta]}{\prod_{i=1}^4\P^\delta[x_i^\delta\leftrightarrow B(x_i,1)]}\Bigg/\frac{\P^\delta[V]}{\P^\delta[W_1]\P^\delta[\wt W_2]\P^\delta[W_3]\P^\delta[\wt W_4]}-1\right|< \varepsilon(s).
\]
By~\cite[Theorem 1.1]{DGLZ24}, for each $1\le i\le 4$, $\P^\delta[x_i^\delta\leftrightarrow B(x_i,1)]=C_1\delta^{\frac{1}{3}}(1+o(1))$ for some constant $C_1\in(0,\infty)$. By~\cite[Theorem 1.9]{cf2025}, $\delta^{-\frac{4}{3}}\P^\delta[x_1^\delta\leftrightarrow x_2^\delta\leftrightarrow x_3^\delta\leftrightarrow x_4^\delta]$ converges to a limit $L\in(0,\infty)$ as $\delta\to0$.
On the other hand, when we first let $\delta\to0$, due to the full scaling limit convergence of critical Bernoulli percolation to $\CLE_6$~\cite{camia-newman-06}, $\P^\delta[V]\to\P[E]$ as $\delta\to0$, where the event $E$ is defined in Proposition~\ref{lem:CLE-shrink}. Meanwhile, $\P^\delta[W_i]$ and $\P^\delta[\wt W_i]$ converge to their $\CLE_6$ analogs, denoted by $\P[W_i]$ and $\P[\wt W_i]$, as $\delta\to0$. Therefore, for all $r_i< s$, we have
\begin{equation}\label{eq:discrete-1}
L(1+\varepsilon(s))^{-1}\le C_1^4\left(\prod_{i=1}^4 r_i^{-\frac{1}{3}}\P[W_1]\P[\wt W_2]\P[W_3]\P[\wt W_4]\right)^{-1}\left(\prod_{i=1}^4 r_i^{-\frac{1}{3}}\P[E]\right)\le L(1-\varepsilon(s))^{-1}.
\end{equation}
Now we take the limit of~\eqref{eq:discrete-1} in the following order $\mathcal{O}$: first $r_1\to0$, then $r_2,r_4\to0$, finally $r_3\to0$. By Proposition~\ref{lem:CLE-shrink}, under the limit order $\mathcal{O}$, we have $\prod_{i=1}^4 r_i^{-\frac{1}{3}}\P[E]\to C_2 G^{(1234)}(x_1,x_2,x_3,x_4)$ for some $C_2\in(0,\infty)$. If we write $\ol C$ and $\underline{C}$ be the limsup and liminf of $\prod_{i=1}^4 r_i^{-\frac{1}{3}}\P[W_1]\P[\wt W_2]\P[W_3]\P[\wt W_4]$ under the limit order $\mathcal{O}$, then~\eqref{eq:discrete-1} yields
\[
L(1+\varepsilon(s))^{-1}\le \ol C^{-1}C_1^4C_2G^{(1234)}(x_1,x_2,x_3,x_4)\le \underline{C}^{-1} C_1^4C_2G^{(1234)}(x_1,x_2,x_3,x_4)\le L(1-\varepsilon(s))^{-1}.
\]
Taking $s\to0$ yields $\underline{C}=\ol C$, and hence $L=CG^{(1234)}(x_1,x_2,x_3,x_4)$ for some $C\in(0,\infty)$. This gives~\eqref{eq:discrete-1234}.

Now we consider $\P^\delta[x_1^\delta\leftrightarrow x_2^\delta\not\leftrightarrow x_3^\delta\leftrightarrow x_4^\delta]$. Though it is not an increasing event (hence we may not use~\cite[Proposition 10]{conijn15} directly), we can instead consider $\P^\delta[x_1^\delta\leftrightarrow x_2^\delta, x_3^\delta\leftrightarrow x_4^\delta]$, and define
\[
V':=\{I(x_1,r_1)\leftrightarrow B(x_2,r_2),\ I(x_3,r_3)\leftrightarrow B(x_4,r_4)\}.
\]
Then one can similarly use Proposition~\ref{lem:CLE-shrink} to show that
\[
\lim_{\delta\to0}\delta^{-\frac{4}{3}}\P^\delta[x_1^\delta\leftrightarrow x_2^\delta, x_3^\delta\leftrightarrow x_4^\delta]=C'G^{(12)(34)}(x_1,x_2,x_3,x_4)+C''G^{(1234)}(x_1,x_2,x_3,x_4)
\]
for some constant $C',C''\in(0,\infty)$.
Since $\lim_{\delta\to0}\frac{\P^\delta[x_1^\delta\leftrightarrow x_2^\delta\leftrightarrow x_3^\delta\leftrightarrow x_4^\delta]}{\P^\delta[x_1^\delta\leftrightarrow x_2^\delta, x_3^\delta\leftrightarrow x_4^\delta]}$ tends to $1$ as $x_2\to x_3$, we find $C''=C$. Combined with~\eqref{eq:discrete-1234}, we obtain~\eqref{eq:discrete-12-34}.
\end{proof}

\begin{remark}
    As a by-product, the above proof indeed shows that $\lim_{\delta\to0}\P^\delta[W_1]=Cr_1^{\frac{1}{3}}(1+o(1))$ and $\lim_{\delta\to0}\P^\delta[\wh W_1]=\wh Cr_1^{\frac{1}{3}}(1+o(1))$ as $r_1\to0$ for some constants $C,\wh C\in(0,\infty)$.
\end{remark}

Note that the proof of~\cite[Proposition 10]{conijn15} is based on RSW estimates and FKG inequality to derive a variant of the one-arm event coupling in~\cite{gps-pivotal}. Both of these have counterparts on the critical FK-Ising model, see~\cite{FK-FKG,FK-RSW}. The full scaling limit $\CLE_{16/3}$ of the critical FK-Ising model was established in~\cite{kemppainen2019}. The sharp asymptotics $\P^\delta_{\rm FK}[x_i^\delta\leftrightarrow B(x_i,1)]=C\delta^{\frac{1}{2}}(1+o(1))$ for the FK-Ising model was derived in~\cite[Eq.(1.7)]{CF-FK}. Thus, we have all the needed ingredients for the FK-Ising model, and the FK-Ising analog of Proposition~\ref{prop:discrete} follows similarly.

\section{MATLAB code}\label{appendix:matlab}

Here we provide MATLAB code that verifies the derivation of the third-order ODE~\eqref{eq:ode} obtained by substituting~\eqref{eq:g0-U} into~\eqref{eq:fusion-pde}.

\begin{footnotesize}
\begin{lstlisting}
clear
syms u x1 x2 x3 kappa real
h = 8/kappa - 1;

% Difference variables
delta_1_u = x1 - u;      % x1 - u
delta_2_u = x2 - u;      % x2 - u  
delta_3_u = x3 - u;      % x3 - u

delta_2_1 = x2 - x1;     % x2 - x1
delta_3_1 = x3 - x1;     % x3 - x1
delta_3_2 = x3 - x2;     % x3 - x2

delta_u_3 = u - x3;      % u - x3

% Cross-ratio
lambda = (delta_1_u * delta_3_2) / (delta_3_1 * delta_2_u);

% Symbolic derivatives of U, corresponds to U, U', U'', U'''
syms U0 U1 U2 U3

% Prefactor
prefactor = ((delta_3_1 * delta_2_u) / (delta_1_u * delta_3_2 * delta_2_1 * delta_u_3))^(2*h);

%% Derivatives of lambda, prefactor with respect to u
lambda_u  = diff(lambda, u);
lambda_uu = diff(lambda, u, 2);
lambda_uuu = diff(lambda, u, 3);

prefactor_u   = diff(prefactor, u);
prefactor_uu  = diff(prefactor, u, 2);
prefactor_uuu = diff(prefactor, u, 3);

%% Chain rule expansion
U_u   = U1 * lambda_u;
U_uu  = U2 * lambda_u^2 + U1 * lambda_uu;
U_uuu = U3 * lambda_u^3 + 3 * U2 * lambda_u * lambda_uu + U1 * lambda_uuu;

% g function and its derivatives
g_u = prefactor_u * U0 + prefactor * U_u;

g_uuu = prefactor_uuu * U0 ...
      + 3 * prefactor_uu * U_u ...
      + 3 * prefactor_u * U_uu ...
      + prefactor * U_uuu;

%% Summation of derivatives with respect to x1, x2, x3
sum_terms1 = 0; 
sum_terms2 = 0;

for xi = [x1 x2 x3]
    % Derivatives of lambda, U, g with respect to xi
    lambda_xi  = diff(lambda, xi);
    lambda_u_xi = diff(lambda_u, xi);  % Derivative of lambda_u with respect to xi

    U_xi = U1 * lambda_xi;
    U_u_xi = U2 * lambda_u * lambda_xi + U1 * lambda_u_xi;  % Mixed partial derivative

    g_xi = diff(prefactor, xi) * U0 + prefactor * U_xi;

    % Mixed partial derivative of g_u with respect to xi
    g_u_xi = diff(prefactor_u, xi) * U0 ...
           + diff(prefactor, xi) * U_u ...
           + prefactor_u * U_xi ...
           + prefactor * U_u_xi;

    % Accumulate terms
    sum_terms1 = sum_terms1 + (4 * g_xi / (xi - u)^2 - 8 * h * (prefactor * U0) / (xi - u)^3);
    sum_terms2 = sum_terms2 + (4 * g_u_xi / (xi - u) - 4 * h * g_u / (xi - u)^2);
end

coefficient = (1 - 8/kappa) / 2;

% Let v be the variable representing the cross-ratio
syms v

% Result of substitution
expression = kappa/4 * g_uuu + coefficient * sum_terms1 + sum_terms2;

%% Case 1: u = -1, x1 = 0, x2 = 1, x3 expressed in terms of v
expr_case1 = subs(expression, {u, x1, x2}, {-1, 0, 1});
expr_case1 = subs(expr_case1, {x3}, {1/(1-2*v)});

%% Case 2: u = 0, x2 = 1, x3 = 2, x1 expressed in terms of v  
expr_case2 = subs(expression, {u, x2, x3}, {0, 1, 2});
expr_case2 = subs(expr_case2, {x1}, {(2*v)/(1+v)});

% Simplify and output
simplify(expr_case1)
simplify(expr_case2)
\end{lstlisting}
\end{footnotesize}

Both of the two outputs, after combining like terms, give the same third-order ODE as~\eqref{eq:ode}.

\bibliographystyle{alpha}
\footnotesize{
\newcommand{\etalchar}[1]{$^{#1}$}
\def\cprime{$'$}

}

\end{document}